\newtheorem{thm}{Theorem}[section]
\newtheorem{prp}[thm]{Proposition}
\newtheorem{lem}[thm]{Lemma}
\newtheorem{crl}[thm]{Corollary}
\newtheorem{thmNN}{Theorem}
\newtheorem{definition}[thm]{Definition}
\newtheorem{definitionNN}{Definition}
\newtheorem{remark}[thm]{Remark}
\newtheorem{remarks}[thm]{Remarks}
\newenvironment{proof}%
{\removelastskip%
\vspace{12 pt plus1pt}
\noindent\textit{Proof.}}{\hfill $\square$\par \medskip}%
\DeclareMathOperator{\can}{can}
\DeclareMathOperator{\card}{card}
\DeclareMathOperator{\gp}{gp}
\DeclareMathOperator{\Hom}{Hom}
\newcommand{\norm}[1]{\| #1\|}
\DeclareMathOperator{\md}{md}
\DeclareMathOperator{\supp}{supp}
\def\B{\mathbb{B}}
\def\N{\mathbb{N}}
\def\Q{\mathbb{Q}}
\def\R{\mathbb{R}}
\def\s{\mathbb{S}}
\def\Z{\mathbb{Z}}
\let\@@angstroem=\AA    
\let\angstroem=\@@angstroem
\def\AA{\mathcal{A}}
\def\BB{\mathcal{B}}
\def\CC{\mathcal{C}}
\def\FF{\mathcal{F}}
\def\HH{\mathcal{H}}
\def\KK{\mathcal{K}}
\def\OO{\mathcal{O}}
\def\RR{\mathcal{R}}
\def\SS{\mathcal{S}}
\def\TT{\mathcal{T}}
\def\XX{\mathcal{X}}
\def\YY{\mathcal{Y}}
\def\ZZ{\mathcal{Z}}
\newcommand{\mono}{\rightarrowtail}
\newcommand{\incl}{\hookrightarrow}
\newcommand{\epi}{\twoheadrightarrow}
\newcommand{\iso}{\stackrel{\sim}{\longrightarrow}}
\newcommand{\eg}{e.\;g.}
\begin{document}
%-------------------------------
%
%
\title{A sufficient condition for finite presentability\\ of abelian-by-nilpotent groups}
\date{}
\author{Ralph Strebel}
\maketitle

\begin{abstract}
A recipe for obtaining finitely presented abelian-by-nilpotent groups is given. 
It relies on a geometric procedure 
that generalizes the construction of finitely presented metabelian groups
introduced by R. Bieri and R. Strebel in 1980. 
\end{abstract}
\smallskip

\textbf{Mathematics Subject Classification (2010)} 
20F16, 20F65.
\medskip

\textbf{Keywords.} Soluble groups,  finite presentation, geometric invariant Sigma. 
%
%====================
%
\section{Introduction}
\label{sec:Introduction}
%
%====================
%
The theory of finitely presented soluble groups, as it is known today,  
has its origin in a startling discovery 
made by G. Baumslag and, independently, by V. N. Remeslennikov in the early 1970s:
they showed 
that \emph{every finitely generated metabelian group can be embedded into a finitely presented metabelian group} (\cite{Bau73} and \cite{Rem73a}).
This embedding is based on a fairly simple, but flexible construction;
it has been used later on by M. W. Thomson in \cite{Tho77b}
to prove  a similar embedding result for soluble linear groups
and also by O. G. Kharlampovich in her construction of a finitely presented soluble group 
with insoluble word-problem (see \cite[§3]{Kha81}).
\smallskip

\textbf{1.} The stated results might give one the impression
that the creation of new kinds of finitely presented soluble groups is only impeded by the lack of ingenuity, 
not by structural constraints.
An early result challenging this opinion was published by R. Bieri and R. Strebel in 1978.
In \cite{BiSt80}, these authors strengthened the earlier result 
and complemented it by a very general method for manufacturing finitely presented metabelian groups.
The two results, taken together,
lead to a characterization of the finitely presented metabelian groups 
among the finitely generated metabelian groups.

The method for producing finitely presented metabelian groups admits of an extension 
that yields \emph{finitely presented abelian-by-nilpotent groups}.
I found this generalization in 1981,  but did not publish it, as its utility was far from clear: 
the new method did not seem to lead to an embedding result for finitely generated abelian-by-nilpotent groups 
into finitely presented groups of the same kind, 
and the hypotheses needed to carry out the construction seemed to be far from being necessary.
\smallskip
 
\textbf{2.} 
The difficulties encountered by the new method in producing interesting finitely presented abelian-by-nilpotent groups
have later been explained by C. Brookes, J. R. J. Groves, J. E. Roseblade and J. S. Wilson 
in a series of papers in the late 1990s:
 in \cite{BRW97}, Brookes, Roseblade and Wilson showed 
 that a \emph{finitely presented abelian-by-polycyclic group $G$} is necessarily nilpotent-by-nilpotent-by-finite; 
 in \cite{Bro00} then, this conclusion is sharpened  
 to \emph{$G$ is nilpotent, by nilpotent of class at most two, by finite}.
  
 The question now arises whether Brookes' conclusion can be sharpened further.
 In answering this question,
 the old result of 1981 has turned out to be helpful: 
J. R. J. Groves and the author use it in \cite{GrSt11} 
to construct, given a finitely generated nilpotent group $Q$ of class two, 
 a finitely generated $\Z{Q}$-module $A$ enjoying the properties
 that $A$ is the Fitting subgroup of the split extension $G = Q \ltimes A$ 
 and that $G$ admits a finite presentation.
\smallskip

\textbf{3.} 
This paper is an updated and abridged version 
\footnote{I have omitted the construction of finitely presented groups $G$
which are extensions $A \triangleleft G \epi Q$ of an abelian normal subgroup $A$ by a nilpotent group $Q$
that do not split.}
of the unpublished article \cite{Str81b}.
Its contents and layout are as follows:
in Section \ref{sec:Invariant-Sigma0}, the invariant $\Sigma^0$ is introduced. 
Its definition can be stated for a finitely generated, but otherwise arbitrary,  group $Q$ 
and a finitely generated $\Z{Q}$-module
and reads:
\[
\Sigma^0(Q;A) = \{ [\chi] \in S(Q) \mid A \text{ is fg over the monoid ring }\Z{Q}_\chi \}.
\]
(See section \ref{ssec:Sigma0-preliminaria}
for the definitions of the sphere $S(Q)$, the submonoids $Q_\chi$ and the rays $[\chi]$).
Some properties of the invariant can be established for arbitrary groups,
but for most of the results proved in this paper stronger assumptions are needed.
This is, in particular, true for the hypothesis imposed on the module $A$ 
that enables one to show that the split extension $G = Q \ltimes A$ admits of a finite presentation.
This hypothesis, called \emph{tameness}, is defined and investigated in Section \ref{sec:Tame-modules}; 
it presupposes that $Q$ is nilpotent and it involves a central series
$\{1\} = Q_{k+1} < Q_k < \cdots < Q_2 < Q_1 = Q$ of $Q$.

In the special case where $Q$ is nilpotent of class 2 and the series is of length 2  with $Q_2 = [Q,Q]$,
the definition of tameness reads as follows:
\begin{definitionNN}
Let $A$ a finitely generated $\Z{Q}$-module and  $\AA$ a finite generating set of $A$.
We call  $A$ \emph{tame} if the invariants of $Q$ and of $Q' = [Q,Q]$ satisfy the following requirements:
\begin{align*}
\Sigma^0(Q; A) &\cup  -\Sigma^0(Q;A )= S(Q)  \text{ and }\\
\Sigma^0(Q'; \AA \cdot \Z{Q'}) &\cup  -\Sigma^0(Q'; \AA \cdot \Z{Q'}) = S(Q').
\end{align*}
\end{definitionNN}
The notion of tame modules in hand,
the main result can be stated like this: 
\setcounter{section}{4}
\begin{thmNN}
\label{thm:Main-result}
Let $Q$ be a finitely generated nilpotent group and $A$ a finitely generated $\Z{Q}$-module.
 If $A$ is tame the semi-direct $Q \ltimes A$ has a finite presentation.
\end{thmNN}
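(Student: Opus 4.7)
The plan is to extend the geometric construction of Bieri--Strebel by induction on the nilpotency class of $Q$, exhibiting an explicit finite presentation of $G=Q\ltimes A$. The candidate presentation comprises a finite presentation $\langle X\mid R_Q\rangle$ of $Q$ (which exists since $Q$ is polycyclic), the module generators $\AA=\{a_1,\ldots,a_n\}$, commutator relations $[a_i,a_j]=1$, finitely many defining $\Z Q$-module relations for $A$ (available because $\Z Q$ is Noetherian by P.~Hall), and, for each $a_i$, a finite list of \emph{reduction rules} of the form $a_i^{\,q}=\sum_j \lambda_{ij}\,a_{k_{ij}}^{\,q_{ij}}$ produced from tameness. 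Indeed, for each $[\chi]\in S(Q)$, tameness of $A$ over $\Z Q$ provides a finite $T_\chi\subset Q$ with $A=\AA\cdot T_\chi\cdot\Z Q_\chi$ (or the analogue for $-\chi$), and compactness of $S(Q)$ reduces the collection of $\chi$'s to a finite one, so finitely many rules suffice.

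The base case $Q$ abelian is the classical Bieri--Strebel theorem. For $Q$ nilpotent of class $k\geq 2$, the task is to show that the listed relations generate all relations of $G$. Any $\Z Q$-relation $\sum \lambda_i\,a_{j_i}^{\,q_i}=0$ in $A$ is reduced as follows: at each step pick a character $\chi$ extremal on the support $\{q_i\}$, apply the reduction rule attached to $\chi$ to shift the support into a $\chi$-positive half-space, and iterate. After finitely many passes the support is confined to a bounded region of $Q$, only finitely many module elements are involved, and the relation is killed by the Noetherian defining relations.

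The main obstacle is that reductions in $G$ must also travel through the group relations of $Q$: commuting $a_i^{\,q}$ past a generator of $Q$ introduces conjugates indexed by commutators in $Q'$, so the procedure does not close inside $Q$ alone but spills down the central series. This is precisely why tameness is also demanded of the $\Z Q'$-module $\AA\cdot\Z Q'$: it supplies a parallel set of reduction rules living in $Q'$. The cleanest organization is a layered induction along the central series $\{1\}=Q_{k+1}<\cdots<Q_1=Q$, using tameness at each level to shrink supports on that layer before propagating the result one level up. Proving that this layered reduction terminates --- that the supports strictly decrease in a well-founded lexicographic sense compatible with the series --- is the delicate combinatorial heart of the argument, and the step where the full strength of the tameness hypothesis, stratified along the central series as described in Section~\ref{sec:Tame-modules}, is consumed.
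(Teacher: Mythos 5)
Your outline correctly identifies the skeleton of the argument --- a layered induction along the central series, compactness to extract finite data from tameness at each level, and a termination argument driven by shrinking supports --- but the candidate finite presentation has a real gap. What must be deduced from finitely many relations is the infinite family of commutators $\KK = \{[a,b^w] : (a,b)\in\AA^2,\ w\in F\}$, not ``any $\Z{Q}$-relation $\sum\lambda_i a_{j_i}^{q_i}=0$'': once the normal closure of $\AA$ in the candidate group is known to be abelian, its module relations are handled by Noetherianness with no further work. Your finite list of commutator relations contains only $[a_i,a_j]=1$, i.e., the case $w=1$. The paper's presentation instead imposes $\KK_0 = \{[a,b^w] : (a,b)\in\AA^2,\ w\in W\}$ for a carefully chosen finite set $W=V_1\cdots V_k$ of ordered words; the size of each $V_i$ is dictated by a radius $\rho_i$. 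These radii anchor the inner inductions: the statement $\SS_{k+1}$ that starts the whole chain asserts $[a,b^w]=1$ for all $w\in W$, which holds in $\tilde{G}$ by fiat because exactly those commutators are imposed relators. With $W=\{1\}$, the induction has no base.

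The radii $\rho_i$ come from a concrete lattice-geometric fact (Lemma \ref{lem:Geometric-lemma}, taken from \cite{Str84}): given the finite family $\FF_i$ of supports $L_{j,i}\subset\Z^{n_i}$ of the tameness data, which together with its antipodal copy covers all directions, there is a $p_0(i)$ such that every lattice point $x$ with $\norm{x}^2>p_0(i)$ admits some $L\in\FF_i$ with $x+L$ inside the ball of radius-squared $\norm{x}^2-1$. This lemma is at once the definition of $W$ and the engine of the inner induction $\SS_{i,p}\Rightarrow\SS_{i,p+1}$. Your phrase ``the delicate combinatorial heart'' names the problem but leaves this engine entirely unspecified; the proof stands or falls on it. Two further points: the matrices of reduction rules can and should be taken \emph{diagonal} (Corollary \ref{crl:Sigma0-criterion-modules-polycyclic-groups}), which is what keeps the commutator-rewriting in the inductive step tractable --- your cross-terms $a_{k_{ij}}$ would complicate it considerably --- and one must first pass to a finite-index subgroup of $Q$ whose central-series factors are free abelian (Proposition \ref{prp:Passage-subgroup-finite-index}) so that the euclidean picture in $\Z^{n_i}$ is available at all.
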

\setcounter{section}{1}
This result is established in Section \ref{sec:Construction-groups}.

The paper concludes with some examples illustrating the use of Theorem \ref{thm:Main-result}
and comments on the necessity of the requirement that the module $A$ be tame.
\smallskip

\textbf{4.} I thank John Groves for bringing to my attention the problem studied in \cite{GrSt11}.
This problem provided the impetus for publishing the old results set forth in this article.
I am also grateful to the referee for his or her unusually thorough reading of the paper 
and the helpful list of suggestions; 
they lead to a number of improvements of the article, both in content and in exposition.
%
%
%====================
%
\section{The invariant $\Sigma^0(Q;A)$}
\label{sec:Invariant-Sigma0}
%
%====================
%
The definition of the invariant $\Sigma^0(Q;A)$ for a finitely generated group $Q$ 
is a straightforward generalization  of the definition of $\Sigma_A(Q)$ introduced in \cite{BiSt80},
where $Q$ is a finitely generated abelian group.
The properties of $\Sigma^0(Q;A)$, however, are harder to establish 
and are less satisfactory than those of $\Sigma_A(Q)$.
In order not to overburden this preliminary section,
$Q$ will be a finitely generated, but otherwise arbitrary, group in section \ref{ssec:Sigma0-preliminaria}
and at the beginning of section \ref{prp:Sigma0-criterion},
but later on it will be assumed to be polycyclic;
this restriction leads to shaper and more easily stated results.
(In sections \ref{sec:Tame-modules} and \ref{sec:Construction-groups}
the group $Q$ will actually be nilpotent.)
%
%
%====================
\subsection{Definition and first properties}
\label{ssec:Sigma0-preliminaria}
%====================
%
We define the invariant $\Sigma^0(Q;A)$, establish some of its easy properties
and then introduce a function $v_{\chi} \colon \Z{Q} \to \R \cup \{\infty\}$ 
whose properties are similar to those of a valuation of a field.
%
%-----------------------
\subsubsection{The sphere $S(Q)$ and the invariant $\Sigma^0(Q;A)$}
\label{sssec:Sigma0-definition}
%-----------------------
%
Let $Q$ be a finitely generated group.
The group $Q$ gives rise to a sphere $S(Q)$ obtained like this:
the homomorphisms $\chi \colon Q \to \R$ of $Q$ into the additive group of $\R$
form a finite dimensional vector space $\Hom(Q, \R)$ over the reals
and each non-zero homomorphism $\chi$ of $Q$ gives rise to a subset 
\[
Q_{\chi} = \{q \in Q \mid \chi(q)  \geq 0\} 
\]
of $Q$.
This subset is actually a submonoid of $Q$ and it does not change  
if $\chi$ is replaced by a positive multiple of itself.
The space of submonoids $Q_\chi$ is therefore parametrized by the rays $[\chi]$ emanating from the origin;
they are the points of the sphere 
\begin{equation}
\label{eq:Character-sphere}
S(Q) = \{ [\chi] \mid \chi \in \Hom(Q, \R) \smallsetminus \{0\} \}.
\end{equation}
For later use, we introduce also  a collection of subspheres of $S(Q)$:
given a subgroup $K$ of $Q$ we set
\begin{equation}
\label{eq:Subsphere-character-sphere}
S(Q, K) = \{ [\chi] \in S(Q) \mid \chi(K) = \{0 \}  \}.
\end{equation}

The invariant $\Sigma^0(Q;A)$ is a subset of the sphere $S(Q)$;
it depends on $Q$ and on a finitely generated $\Z{Q}$-module $A$.
The module $A$ can be regarded as module over the various monoid rings $\Z{Q_\chi}$,
but $A$ may not be finitely generated over some of them.
The invariant $\Sigma^0(Q;A)$ collects those submonoids 
over which $A$ remains finitely generated:
\begin{equation}
\label{eq:Definition-SigmaA(Q)}
\Sigma^0(Q;A) = \{ [\chi] \in S(Q) \mid A \text{ is fg over the monoid ring }\Z{Q}_\chi \}.
\end{equation}
\begin{remarks}
\label{remarka:Origin-definition-sphere-invariant}
a) In \cite{BiSt80}, a subset $\Sigma_A(Q)$ is studied,
with $Q$ a finitely generated \emph{abelian} group.
In \cite{Str81b} the generalization given in the above is put forward and analyzed.
Later on, higher dimensional analogues of $\Sigma^0$ were defined and investigated 
by R.~Bieri and B.~Renz in \cite{BiRe88}.

b) The modules in this paper will be \emph{right} modules, 
in contradistinction to the modules considered in  \cite{BiSt80} and in \cite{BiRe88}.
Every right $Q$-module $A$ can, of course, be turned into a left $Q$-module $A_{s}$ 
by declaring that $q \star a = a \cdot q^{-1}$ 
and definition \ref{eq:Definition-SigmaA(Q)} has an obvious interpretation for left $\Z{Q}$-modules.
The two subsets are then related by $\Sigma^0(Q; A_{s}) = - \Sigma^0(Q;A)$.

c) In the context of the subsets Sigma it has become customary to refer to 
the homomorphisms into the additive group of $\R$  as \emph{characters}.
\end{remarks}

%-----------------------
\subsubsection{Elementary properties of $\Sigma^0$}
\label{sssec:Basic-properties-Sigma0}
%-----------------------

We continue with four simple properties of $\Sigma^0$.
In the first of them,  $Q$ is fixed and $A$ is varied.
\begin{lem}
\label{lem:ses-modules}
For every extension $A_{1} \mono A \epi A_{2}$  of finitely generated $\Z{Q}$-modules 
the chain of inclusions
\begin{equation}
\label{eq:Sigma0-extension-modules}
\Sigma^0(Q;A_{1}) \cap \Sigma^0(Q;A_{2}) \subseteq \Sigma^0(Q;A) \subseteq \Sigma^0(Q;A_{2})
\end{equation}
holds. 
If the extension splits the first of these inclusions is actually an equality.
\end{lem}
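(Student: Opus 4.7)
The statement is a routine check, so the plan is simply to unwind the definition
\[
\Sigma^0(Q;A) = \{[\chi] \mid A \text{ is finitely generated as a right } \Z{Q}_\chi\text{-module}\}
\]
for each of the three assertions in turn. I treat $A_1$ as a submodule of $A$ and write $\pi \colon A \epi A_2$ for the quotient map.

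First I would verify the right-hand inclusion $\Sigma^0(Q;A) \subseteq \Sigma^0(Q;A_2)$. If $\mathcal A \subset A$ is a finite set that generates $A$ over $\Z{Q}_\chi$, then $\pi(\mathcal A)$ generates $A_2$ over $\Z{Q}_\chi$ because $\pi$ is $\Z{Q}$-linear, hence $\Z{Q}_\chi$-linear, and it is surjective.

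Next I would verify the left-hand inclusion $\Sigma^0(Q;A_1) \cap \Sigma^0(Q;A_2) \subseteq \Sigma^0(Q;A)$. Assume $[\chi]$ lies in both sets on the left, pick a finite $\Z{Q}_\chi$-generating set $\mathcal A_1$ of $A_1$, choose finitely many elements $\mathcal A_2 \subset A$ whose images under $\pi$ generate $A_2$ over $\Z{Q}_\chi$, and claim that $\mathcal A_1 \cup \mathcal A_2$ generates $A$ over $\Z{Q}_\chi$. Given $a \in A$, expand $\pi(a)$ as a $\Z{Q}_\chi$-combination of $\pi(\mathcal A_2)$; subtracting the corresponding combination of $\mathcal A_2$ from $a$ yields an element of $A_1$, which is in turn a $\Z{Q}_\chi$-combination of $\mathcal A_1$. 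This is the one step that really uses the hypothesis that the monoid $\Z{Q}_\chi$ suffices for \emph{both} sub- and quotient module.

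Finally, for the equality in the split case, suppose $A = A_1 \oplus A_2$ as $\Z{Q}$-modules, with $\Z{Q}$-linear projections $p_i \colon A \to A_i$. If $[\chi] \in \Sigma^0(Q;A)$ and $\mathcal A$ is a finite $\Z{Q}_\chi$-generating set of $A$, then $p_i(\mathcal A)$ generates $A_i$ over $\Z{Q}_\chi$ for $i=1,2$ by the same reasoning as in the first step, since each $p_i$ is $\Z{Q}_\chi$-linear and surjective. Hence $[\chi]$ lies in $\Sigma^0(Q;A_1) \cap \Sigma^0(Q;A_2)$, and combined with the left-hand inclusion already shown this gives equality. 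There is no real obstacle in this lemma; the only place where one must be slightly careful is to observe that $\Z{Q}_\chi$-linearity of $\pi$ and $p_i$ is automatic from their $\Z{Q}$-linearity, so no compatibility condition between $\chi$ and the extension needs to be checked.
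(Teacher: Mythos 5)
Your proof is correct and follows the same route as the paper's: the two inclusions come from the usual ``generators from sub and quotient generate the extension'' argument (spelled out here, left implicit in the paper), and the split case is handled by observing that $A_1$ becomes a quotient of $A$, so the already-proved right-hand inclusion applies. The paper states this last step by invoking the second inclusion of \eqref{eq:Sigma0-extension-modules} directly rather than introducing projections $p_i$, but the content is identical.
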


\begin{proof}
%All verifications are straightforward.
If $[\chi]$ is contained in the intersection,
both $A_{1}$ and $A_{2}$ are finitely generated over $\Z{Q_{\chi}}$,  
whence $A$ has the same property.
So $[\chi] \in \Sigma^0(Q; A)$ by definition.
The inclusion $\Sigma^0(Q;A) \subseteq \Sigma^0(Q;A_{2})$ is proved similarly. 
If the extension splits, the module $A_{1}$ is a quotient module of $A$ 
and so the inclusion $\Sigma^0(Q;A) \subseteq \Sigma^0(Q;A_{1})$ holds
by the second inclusion in \eqref{eq:Sigma0-extension-modules}.
It follows that the first inclusion in \eqref{eq:Sigma0-extension-modules} is then an equality.
\end{proof}

The second result compares $\Sigma^0(Q;A)$ with $\Sigma^0(P;A)$, 
the subgroup $P$ being of finite index in $Q$. 
\begin{lem}
\label{lem:Subgroup-finite-index-Sigma0}
Suppose $\iota \colon  P \hookrightarrow Q$  is the inclusion of a subgroup $P$ of  finite index into $Q$.
Then the morphism $\iota^* \colon S(Q) \mono S(P)$ induced by the inclusion of $\iota$
maps $\Sigma^0(Q;A)$ bijectively onto $\Sigma^0(P; A)  \cap S(P, P\cap Q')$. 
\end{lem}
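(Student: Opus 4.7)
My plan has two halves: first identify the image of the character restriction $\iota^*\colon \Hom(Q,\R) \to \Hom(P,\R)$, $\chi \mapsto \chi|_P$, as a bijection $S(Q) \iso S(P, P\cap Q')$; then show that for any nonzero $\chi$ with $\psi := \chi|_P$, the conditions $[\chi] \in \Sigma^0(Q;A)$ and $[\psi] \in \Sigma^0(P;A)$ are equivalent.

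For the first half: every character of $Q$ factors through $Q/Q'$, so the restriction to $P$ kills $P\cap Q'$, placing the image of $\iota^*$ in $\Hom(P/(P\cap Q'),\R)$. Injectivity uses torsion-freeness of $\R$: a character $\chi$ with $\chi|_P=0$ is constant on each right coset $Pq$, making $\chi(Q)$ a finite, hence trivial, subgroup of $\R$. Surjectivity onto the characters killing $P\cap Q'$ follows because the inclusion $P \hookrightarrow Q$ induces an embedding of $P/(P\cap Q')$ as a finite-index subgroup of $Q/Q'$; as $\R$ is torsion-free and divisible, every $\R$-valued character of the subgroup extends uniquely.

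The easier direction of the equivalence is $[\psi]\in\Sigma^0(P;A)\Rightarrow[\chi]\in\Sigma^0(Q;A)$: the inclusion $P_\psi\subseteq Q_\chi$ gives $\Z P_\psi\subseteq\Z Q_\chi$, so any finite $\Z P_\psi$-generating set of $A$ also generates $A$ over $\Z Q_\chi$.

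The reverse implication is what I expect to be the main obstacle. I would handle it by a coset-shifting trick. Fix a decomposition $Q = \bigsqcup_{i=1}^n q_i P$ into left cosets. Because $\psi\neq 0$ (by the injectivity of $\iota^*$ established above), the subgroup $\psi(P)\leq \R$ is unbounded above, so we may choose $p_0\in P$ with $\chi(p_0)\geq c := \max_i |\chi(q_i)|$. Given $q\in Q_\chi$, write $q = q_i p$ uniquely with $p\in P$; the estimate $\chi(p)=\chi(q)-\chi(q_i)\geq -c$ yields $\chi(p_0 p)\geq 0$, so $p_0 p\in P_\psi$ and $q = (q_i p_0^{-1})\cdot(p_0 p)$. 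Consequently, if $\{a_1,\ldots,a_m\}$ generates $A$ over $\Z Q_\chi$, the identity $a_j\cdot q = (a_j\cdot q_i p_0^{-1})\cdot(p_0 p)$ shows that the finite set $\{a_j\cdot q_i p_0^{-1} \mid 1\leq j\leq m,\; 1\leq i\leq n\}$ generates $A$ as a $\Z P_\psi$-module, completing the argument.
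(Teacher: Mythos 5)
Your proof is correct and follows essentially the same strategy as the paper's: decompose $Q$ into cosets of $P$, and choose coset data so that $Q_\chi$-generators become $P_\psi$-generators of $A$. The only cosmetic difference is that the paper picks coset representatives $\TT$ with $\chi(\TT)\leq 0$ up front (after noting $\chi(P)\neq\{0\}$), whereas you fix arbitrary representatives $q_i$ and then shift by $p_0$; since the shifted representatives $q_i p_0^{-1}$ satisfy $\chi(q_i p_0^{-1})\leq 0$, this is the same device in different packaging.
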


\begin{proof}
Notice first that $A$ is finitely generated over the subring $\Z{P} \subset \Z{Q}$. 
Consider now a non-zero character $\chi \colon  Q \to \R$.
Then $\chi(P) \neq \{0\}$;
so there exists a finite subset  $\TT \subset Q$ which contains the unit element 1,
represents the homogeneous space $P\backslash Q$ and satisfies $\chi(\TT) \leq 0$. 
Set $\psi = \chi \circ \iota$.

Assume now  $A$ is finitely generated over $\Z{Q_{\chi}}$,  say by the finite set $\AA$.
For every $q \in Q_{\chi}$ there exists $p \in P$ and $t \in \TT$ such that $q = p \cdot t$.
Then $\chi(p) \geq 0$, for $\chi(\TT) \leq 0$, and so $p \in P_{\psi}$.
It follows that $Q_{\chi}$ is contained in the union of the subsets $ P_{\psi} \cdot t$ with $t \in \TT$,
and so $A$ is generated  over $P_{\psi}$ by the finite set $\AA \cdot \TT$.
This reasoning proves the inclusion
\[
\iota^*(\Sigma^0(Q;A)) \subseteq \Sigma^0(P; A)  \cap S(P, P\cap Q').
\]
Its converse is clear.
\end{proof}

There is a variant of Lemma \ref{lem:Subgroup-finite-index-Sigma0}
where one starts with a module $B$ of the smaller group, induces it up to a module $A$ of the larger group
and then compares the invariants $\Sigma^0(P; B)$ and $\Sigma^0(Q;A)$.
If $P$ is normal in $Q$, the following result holds:
\begin{lem}
\label{lem:Subgroup-finite-index-Sigma0-induced-module}
Suppose $\iota \colon  P \hookrightarrow Q$  is the inclusion of a normal subgroup $P$ of finite index into $Q$
and $B$ is a finitely generated $\Z{P}$-module. 
Set $A = B \otimes_{P} \Z{Q}$.
Then the morphism $\iota^* \colon S(Q) \mono S(P)$
maps $\Sigma^0(Q;A)$ bijectively onto $\Sigma^0(P; B)  \cap S(P, P\cap Q')$. 
\end{lem}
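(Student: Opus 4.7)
The plan is to reduce the statement to the preceding Lemma~\ref{lem:Subgroup-finite-index-Sigma0}. Since $B$ is finitely generated over $\Z{P}$ by, say, $\BB$, the induced module $A = B\otimes_{P}\Z{Q}$ is finitely generated over $\Z{Q}$ by $\BB\otimes 1$. So Lemma~\ref{lem:Subgroup-finite-index-Sigma0} already tells us that $\iota^{*}$ maps $\Sigma^{0}(Q;A)$ bijectively onto $\Sigma^{0}(P;A)\cap S(P,P\cap Q')$. It therefore suffices to verify the equality
\[
\Sigma^{0}(P;A)\cap S(P,P\cap Q') \;=\; \Sigma^{0}(P;B)\cap S(P,P\cap Q').
\]

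To prove this equality, I would first describe $A$ as a $\Z{P}$-module via a right transversal. Fix a right transversal $\TT\subset Q$ of $P$ containing $1$. The tensor relation $bp\otimes q = b\otimes pq$ together with the unique factorization $q = pt$ (with $p\in P$, $t\in\TT$) yields the abelian-group decomposition $A = \bigoplus_{t\in\TT} (B\otimes t)$ with each summand in bijection with $B$ via $b\mapsto b\otimes t$. Because $P$ is normal in $Q$, the element $tpt^{-1}$ lies in $P$, and one computes $(b\otimes t)\cdot p = b\otimes tp = (b\cdot tpt^{-1})\otimes t$; hence each $B\otimes t$ is a $\Z{P}$-submodule of $A$, isomorphic to $B^{\sigma_{t}}$, the abelian group $B$ endowed with the $\Z{P}$-action twisted by $\sigma_{t}\in\Aut(P)$, $\sigma_{t}(p)=tpt^{-1}$.

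The key step is then to observe that, for characters $[\psi]\in S(P,P\cap Q')$, the twist $\sigma_{t}$ has no effect on $\Sigma^{0}$. Indeed $\psi(\sigma_{t}(p))-\psi(p) = \psi([t,p])$, and $[t,p]=tpt^{-1}p^{-1}$ belongs to $P\cap Q'$ (to $P$ by normality, to $Q'$ by construction), so the hypothesis on $\psi$ gives $\psi\circ\sigma_{t}=\psi$ and thus $P_{\psi\circ\sigma_{t}^{\pm 1}}=P_{\psi}$. Consequently $B^{\sigma_{t}}$ is finitely generated over $\Z{P_{\psi}}$ iff $B$ is. Since the decomposition of $A$ into summands $B\otimes t$ is finite and consists of $\Z{P}$-submodules, $A$ is finitely generated over $\Z{P_{\psi}}$ iff every $B\otimes t$ is iff $B$ is, which is exactly the desired equality of the two intersections.

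The main conceptual obstacle is the last ingredient: showing that for $\psi$ extending to a character of $Q$, the conjugation-twists $\sigma_{t}$ act trivially on the submonoid $P_{\psi}$. Once this is in place the argument is formal; the finiteness of $\TT$ and the normality of $P$ do the rest. Note that one does not need the finer analysis of $\TT$ used in the proof of Lemma~\ref{lem:Subgroup-finite-index-Sigma0} (namely $\chi(\TT)\le 0$), because we only exploit that lemma's statement, not its proof, to handle the $P$-vs-$Q$ comparison of invariants on $A$ itself.
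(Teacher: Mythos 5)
Your proof is correct and takes essentially the same route as the paper: reduce to the preceding lemma, decompose the restriction of $A$ to $P$ as a finite direct sum of conjugation-twisted copies $B\otimes t$, observe that characters in $S(P,P\cap Q')$ are insensitive to these twists (the paper phrases this by checking $\chi\circ\alpha_q=\chi$ on $P$ for a character $\chi$ of $Q$, you phrase it via $\psi([t,p])=0$, which is equivalent since $\iota^*$ identifies $S(Q)$ with $S(P,P\cap Q')$), and conclude from the finiteness of the direct sum. The only stylistic difference is that the paper invokes the split-extension addendum of Lemma~\ref{lem:ses-modules} where you argue directly from the direct-sum decomposition; these are the same observation.
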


\begin{proof}
Lemma \ref{lem:Subgroup-finite-index-Sigma0} implies that image of $\Sigma^0(Q;A)$
under the isomorphism $\iota^*$ coincides with the intersection  $\Sigma^0(P;A) \cap S(P, P \cap Q')$;
here the $Q$-module $A= B \otimes_{P} \Z{Q}$ 
is to be considered as an $P$-module via restriction. 
As such, 
it is a finite direct sum of modules of the form $B \otimes q$ with $q \in Q$.
Fix $q \in Q$
and let $\alpha_q$ denote the automorphism of $H$ that sends $p$ to $qpq^{-1}$.
An element $p \in P$ acts on a summand $B \otimes q$ as follows:
\[
(b \otimes q) p = (b \otimes q p q^{-1}) \cdot q = b \cdot \alpha_q(p) \otimes  q.
\]

Consider now a non-zero character $\chi \colon Q \to \R$.
We claim it coincides on $P$ with $\chi \circ \alpha_q$;
indeed, if $p \in P$ then 
$(\chi \circ \alpha_q)(p) = \chi (qpq^{-1})  = \chi(p)$.
It follows that $A \otimes q$ is finitely generated over $P_\chi$ if and only if $A$ has this property.
But if so, 
\[
\Sigma^0(P; B) \cap S(P, Q' \cap P) = \Sigma^0(P; B \otimes q) \cap S(P, Q' \cap P) 
\]
for every $q\in Q$. 
The claim now follows from the addendum to Lemma \ref{lem:ses-modules}.
\end{proof}

In Lemma \ref{lem:Subgroup-finite-index-Sigma0} 
one pulls back the action of $Q$ on the $\Z{Q}$-module $A$ to a subgroup $P$ of $Q$
and asks how the invariant $\Sigma^0(P;A)$ is related to $\Sigma^0(Q;A)$.
An analogous question can be asked for an epimorphism $\pi \colon \tilde{Q} \epi Q$.
Its answer is given by
\begin{lem}
\label{lem:Quotient-groups-Sigma0}
Suppose $\pi \colon  \tilde{Q}\epi Q$  is an epimorphism of a finitely generated group $\tilde{Q}$ onto $Q$
and $A$ is a finitely generated $\Z{Q}$-module.
Let $\tilde{A}$ denote the $\Z{\tilde{Q}}$-module obtained from $A$ by pulling back the action via $\pi$.
Then 
\begin{equation}
\label{eq:Sigma0-pulling-back-action}
\Sigma^0(\tilde{Q};\tilde{A}) = \pi^{*}(\Sigma^0(Q;A)) \cup S(\tilde{Q}, \ker \pi)^c.
\end{equation}
In this formula $\pi^{*}$ denotes the embedding $S(Q) \mono S(\tilde{Q})$ induced by  $\pi$.
\end{lem}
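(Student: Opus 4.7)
The plan is to exploit the fact that the image of $\pi^*$ is precisely the subsphere $S(\tilde{Q},\ker\pi)$: a non-zero character $\tilde{\chi}\colon\tilde{Q}\to\R$ factors as $\chi\circ\pi$ for some $\chi\in\Hom(Q,\R)$ if and only if $\tilde\chi$ vanishes on $\ker\pi$. The formula in \eqref{eq:Sigma0-pulling-back-action} therefore naturally splits into two claims: (i) for every $\chi\in\Hom(Q,\R)\smallsetminus\{0\}$ with $\tilde\chi=\chi\circ\pi$ non-zero, $[\chi]\in\Sigma^0(Q;A)\Longleftrightarrow[\tilde\chi]\in\Sigma^0(\tilde{Q};\tilde{A})$; (ii) every $[\tilde\chi]\in S(\tilde{Q})\smallsetminus S(\tilde{Q},\ker\pi)$ lies in $\Sigma^0(\tilde{Q};\tilde{A})$.

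For claim (i), the critical observation is that $\tilde{Q}_{\tilde\chi}=\pi^{-1}(Q_{\chi})$ (since $\tilde\chi(\tilde q)=\chi(\pi(\tilde q))$) and that, by definition of $\tilde A$, the action of $\tilde q\in\tilde Q$ on $\tilde A$ depends only on $\pi(\tilde q)$. A finite set $\AA\subset A$ therefore generates $A$ over $\Z{Q_{\chi}}$ if and only if the same set, viewed inside $\tilde A$, generates $\tilde A$ over $\Z{\tilde{Q}_{\tilde\chi}}$: one direction uses surjectivity of $\pi$ to lift generators $q\in Q_{\chi}$ to $\tilde q\in\tilde{Q}_{\tilde\chi}$, the other follows because $\pi(\tilde{Q}_{\tilde\chi})=Q_{\chi}$. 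This gives the asserted equivalence.

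For claim (ii), if $\tilde\chi(\ker\pi)\ne\{0\}$ then there is some $k\in\ker\pi$ with $\tilde\chi(k)>0$. Since $\pi(k)=1$, the element $k$ acts trivially on $\tilde A$, so for any finite generating set $\AA$ of $\tilde A$ over $\Z{\tilde Q}$, any generator $a\in\AA$ and any $\tilde q\in\tilde Q$, we have $a\cdot\tilde q=a\cdot(\tilde q\, k^{n})$ for every $n\in\Z$. Choosing $n$ large enough that $\tilde\chi(\tilde q)+n\tilde\chi(k)\geq0$ places $\tilde q\, k^{n}$ inside $\tilde Q_{\tilde\chi}$. Consequently $\AA$ already generates $\tilde A$ as a $\Z{\tilde Q_{\tilde\chi}}$-module, so $[\tilde\chi]\in\Sigma^0(\tilde Q;\tilde A)$, regardless of the choice of $\tilde\chi$ outside $S(\tilde Q,\ker\pi)$.

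Neither step looks to be a serious obstacle; the only subtlety is to recognise that the non-factoring characters are forced into $\Sigma^0$ by the triviality of the $\ker\pi$-action, which is what makes the complement term $S(\tilde{Q},\ker\pi)^{c}$ appear on the right-hand side of \eqref{eq:Sigma0-pulling-back-action}.
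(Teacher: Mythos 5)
Your proof is correct and follows essentially the same route as the paper's: the same case split between characters $\tilde\chi$ vanishing on $\ker\pi$ (where the pull-back correspondence and $\tilde Q_{\tilde\chi}=\pi^{-1}(Q_\chi)$ give the equivalence) and those not vanishing there (where an element $k\in\ker\pi$ with $\tilde\chi(k)>0$ acts trivially and sweeps $\tilde Q$ into $\tilde Q_{\tilde\chi}\cdot k^{-n}$). The only difference is that you flesh out the first case in slightly more detail than the paper does.
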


\begin{proof}
Notice first that $\tilde{A}$ is finitely generated over $\tilde{Q}$.
Consider now a non-zero character $\tilde{\chi}\colon \tilde{Q}\to \R$;
two cases arise.
If $\tilde{\chi}$ vanishes on the kernel of $\pi$ it is a pull-back of some character $\chi$ of $Q$,
and $\tilde{A}$ is finitely generated over $\Z\tilde{Q}_{\tilde{\chi}}$ 
if, and only if, $A$ is finitely generated over $\Z{Q}_{\chi}$. 
If, on the other hand, $\tilde{\chi}$ does not vanish on  $N = \ker \pi$
there exists an element $t \in N$ with $\tilde{\chi}(t) > 0$. 
Then $t$ acts by the identity on $\tilde{A}$. 
Since $\tilde{A}$ is finitely generated over $\Z{\tilde{Q}}$
and as $\tilde{Q} = \bigcup\nolimits_{j \in \N} \tilde{Q}_{\tilde{\chi}} \cdot t^{-j}$,
the module $\tilde{A}$ is therefore finitely generated over $\Z{\tilde{Q}_{\tilde{\chi}}}$.
\end{proof}
%
%-----------------------
\subsubsection{Valuations extending characters}
\label{sssec:Valuation-extending-character}
%-----------------------
%
The second topic of this preliminary section is a function associated to a character $\chi$.
The function is defined on the group ring $\Z{Q}$ of $Q$ 
and has properties similar to those of valuations of a field:
\begin{definition}
\label{definition:Naive-valuation-on-group-ring}
Given a character $\chi \colon Q \to \R$,
define $v_{\chi}\colon \Z{Q} \to \R \cup \{\infty \}$ by the formula
\begin{equation}
\label{eq:Naive-valuation-on-group-ring}
v_{\chi}(\lambda) = 
\begin{cases}
\min\{\chi(q) \mid q \in \supp(\lambda) \}& \text{ if } \lambda \neq 0,\\
\infty &\text{ if } \lambda =0.
\end{cases}
\end{equation}
Here $\lambda$ is viewed as a function  $Q \to \Z$ with finite support $\supp(\lambda)$.
The function $v_{\chi}$ will be called the  \emph{naive valuation extending} $\chi$.
\end{definition}

Basic properties of the valuation $v_{\chi}$ are collected in Lemma 
\ref{lem:Properties-valuation-vsubchi};
the proofs are straightforward and hence omitted.
\begin{lem}
\label{lem:Properties-valuation-vsubchi}
The function $v_{\chi}$ enjoys the following properties:
\begin{align}
v_{\chi}(\lambda + \mu) &\geq \min \{v_{\chi}(\lambda), v_{\chi}(\mu)\}
&\text{ for all } &(\lambda, \mu) \in \Z{Q}^2,
\label{eq:Property-addition}\\
v_{\chi}(\lambda \cdot \mu) &\geq v_{\chi}(\lambda) +v_{\chi}(\mu)
&\text{ for all } &(\lambda, \mu) \in \Z{Q}^2,
\label{eq:Property-multiplication}\\
v_{\chi}(\lambda \cdot q) &= v_{\chi}(\lambda) + \chi(q)
&\text{ for all } &( \lambda, q) \in  \Z{Q} \times Q .
\label{eq:Property-multiplication-group-element}
\end{align}
If the group ring $\Z{Q}$ has no zero-divisors,
the two terms in \eqref{eq:Property-multiplication} coincide.
\end{lem}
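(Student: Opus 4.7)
The statement asserts three inequalities/identities for $v_\chi$, each of which follows directly by inspecting the supports of the ring elements involved. The plan is to verify each item via the support formula, and then to extract the leading term argument for the zero-divisor addendum.

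For the additive inequality \eqref{eq:Property-addition}, the plan is to use $\supp(\lambda+\mu) \subseteq \supp(\lambda) \cup \supp(\mu)$, which gives
\[
v_\chi(\lambda+\mu) = \min\{\chi(q) \mid q \in \supp(\lambda+\mu)\} \geq \min\{\chi(q) \mid q \in \supp\lambda \cup \supp\mu\} = \min\{v_\chi(\lambda), v_\chi(\mu)\},
\]
with the cases where $\lambda$, $\mu$, or $\lambda+\mu$ vanish handled by the convention $v_\chi(0) = \infty$.

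For the multiplicative inequality \eqref{eq:Property-multiplication} I would write $\lambda \cdot \mu = \sum_{p \in \supp\lambda,\, q \in \supp\mu} \lambda(p)\mu(q)\cdot pq$ and observe that each group element in $\supp(\lambda\mu)$ is of the form $pq$ with $\chi(pq) = \chi(p)+\chi(q) \geq v_\chi(\lambda) + v_\chi(\mu)$. Cancellations in the sum can only remove terms, hence can only increase $v_\chi$. The identity \eqref{eq:Property-multiplication-group-element} is then an immediate specialization, since right multiplication by the single element $q$ simply translates the support, and no cancellations occur: $\supp(\lambda q) = \supp(\lambda)\cdot q$ and $\chi(pq) = \chi(p) + \chi(q)$ for every $p$ in the support.

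The only part that requires a genuine (though still short) argument is the addendum. The natural approach is a leading-term analysis: write
\[
\lambda = \lambda_0 + \lambda_1, \qquad \mu = \mu_0 + \mu_1,
\]
where $\lambda_0$ collects the terms of $\lambda$ on which $\chi$ attains its minimum value $v_\chi(\lambda)$, and similarly for $\mu_0$, so that $v_\chi(\lambda_1) > v_\chi(\lambda)$ and $v_\chi(\mu_1) > v_\chi(\mu)$. Then every contribution to $\lambda\mu$ of $\chi$-value exactly $v_\chi(\lambda) + v_\chi(\mu)$ comes from the product $\lambda_0 \mu_0$, while the cross terms have strictly larger $v_\chi$. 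If $\Z Q$ has no zero divisors then $\lambda_0 \mu_0 \neq 0$ (since $\lambda_0, \mu_0 \neq 0$); moreover $\chi$ is constant equal to $v_\chi(\lambda)+v_\chi(\mu)$ on $\supp(\lambda_0\mu_0)$, so no further cancellation against $\lambda_0\mu_1 + \lambda_1\mu_0 + \lambda_1\mu_1$ can lower this value. This yields $v_\chi(\lambda\mu) = v_\chi(\lambda) + v_\chi(\mu)$, and is the only step requiring the hypothesis.
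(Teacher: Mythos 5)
Your proof is correct and is precisely the argument one would expect; the paper itself states that ``the proofs are straightforward and hence omitted,'' so there is no paper proof to compare against. Your leading-term decomposition for the addendum correctly locates where the no-zero-divisor hypothesis enters (namely in ensuring $\lambda_0\mu_0\neq 0$), and the observation that $\chi$ is constant, equal to $v_\chi(\lambda)+v_\chi(\mu)$, on $\supp(\lambda_0\mu_0)$ while strictly larger on the supports of the cross terms is exactly what rules out the cancellation that could otherwise raise $v_\chi(\lambda\mu)$.
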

%
%====================
\subsection{$\Sigma^0$-criteria}
\label{ssec:Sigma0-criterion}
%====================
%
The $\Sigma^0$-criteria imply that $\Sigma^0(Q;A)$ is an open subset of $S(Q)$;
they allow one also to prove that a given point $[\chi]$ lies in $\Sigma^0(Q;A)$.
In Sections \ref{sec:Tame-modules} and \ref{sec:Construction-groups},
a criterion valid for polycyclic groups will be needed.
In the proof of this criterion one uses, however, arguments,
that constitute, in essence, a proof of a criterion valid for arbitrary finitely generated groups.
For the sake of clarity,
I give therefore first a criterion for arbitrary groups 
and then a refinement for polycyclic groups.
%
%-----------------------
\subsubsection{First criterion}
\label{sssec:Sigma0-criterion-arbitrary-groups}
%-----------------------
%
%
\begin{prp}
\label{prp:Sigma0-criterion}
Let $\AA$ denote a finite set generating the $\Z{Q}$-module $A$ and let $r$ be a non-negative real number.
Then the following statements are equivalent for every non-zero character $\chi \colon Q \to \R$: 
\begin{enumerate}[(i)]
\item $A$ is finitely generated over $\Z{Q_{\chi}}$;
\item $A$ is generated by $\AA$ over $\Z{Q_{\chi}}$;
\item there exists a matrix $\left(\lambda_{(a, a_1)} \mid (a, a_1) \in \AA^2 \right)$ 
with entries in $\Z{Q_{\chi}}$ with the following two properties:
\begin{enumerate}[a)]
\item $a_{1} = \sum\nolimits_{a \in \AA} a \cdot \lambda_{a,a_1}  $ for every $a_{1}\in \AA$,
\item $v_{\chi} (\lambda_{a,a_1} ) > r$ for all $(a,a_1) \in \AA^2$.
\end{enumerate}
\end{enumerate}
\end{prp}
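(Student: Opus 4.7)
The plan is to establish the cycle $(\text{ii})\Rightarrow(\text{i})\Rightarrow(\text{iii})\Rightarrow(\text{ii})$; the first implication is immediate from the definition of $\Sigma^{0}$, so the work lies in the other two. Both of the remaining implications are driven by the fact that every $q_{0}\in Q$ acts by a bijection on the $\Z Q$-module $A$ (hence $A\cdot q_{0}=A$), combined with the valuation identities of Lemma \ref{lem:Properties-valuation-vsubchi}.

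For $(\text{i})\Rightarrow(\text{iii})$, I would let $\BB$ be a finite subset of $A$ with $A=\BB\cdot\Z{Q_\chi}$ and expand each $b\in\BB$ as a $\Z Q$-combination $b=\sum_{a\in\AA}a\cdot\mu_{a,b}$ of the fixed generators. Only finitely many coefficients $\mu_{a,b}$ enter, so some real number $N$ satisfies $v_{\chi}(\mu_{a,b})\geq -N$ throughout. Because $\chi\neq 0$, the image $\chi(Q)$ is unbounded in $\R$, and I can pick $q_{0}\in Q$ with $\chi(q_{0})>N+r$. Lemma \ref{lem:Properties-valuation-vsubchi} then yields
\[
v_{\chi}(\mu_{a,b}\cdot\sigma\cdot q_{0})\geq v_{\chi}(\mu_{a,b})+v_{\chi}(\sigma)+\chi(q_{0})>-N+0+(N+r)=r
\]
for every $\sigma\in\Z{Q_\chi}$, so each $b\cdot\sigma\cdot q_{0}=\sum_{a}a\cdot(\mu_{a,b}\sigma q_{0})$ already lies in $\AA\cdot\{\lambda\in\Z{Q_\chi}\mid v_{\chi}(\lambda)>r\}$. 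Thus $A\cdot q_{0}\subseteq\AA\cdot\{\lambda\in\Z{Q_\chi}\mid v_{\chi}(\lambda)>r\}$, and the observation $A\cdot q_{0}=A$ then puts every $a_{1}\in\AA$ into that set, producing the matrix required by (iii).

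For $(\text{iii})\Rightarrow(\text{ii})$, I would take the given matrix $(\lambda_{a,a_{1}})$ and set $m:=\min_{a,a_{1}}v_{\chi}(\lambda_{a,a_{1}})$; by hypothesis $m>r\geq 0$, hence $m>0$. For an arbitrary $x\in A$ I would use that $\AA$ generates $A$ over $\Z Q$ to write $x=\sum_{a}a\cdot\xi_{a}$ with $\xi_{a}\in\Z Q$, then decompose $\xi_{a}=\xi_{a}^{+}+\xi_{a}^{-}$ so that $\supp\xi_{a}^{+}\subseteq Q_{\chi}$ and $\supp\xi_{a}^{-}\subseteq\{q\mid\chi(q)<0\}$. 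The piece $\sum_{a}a\cdot\xi_{a}^{+}$ already sits in $\AA\cdot\Z{Q_\chi}$. Each monomial $a\cdot q$ appearing in the negative piece I rewrite via (iii) as $\sum_{a'}a'\cdot(\lambda_{a',a}q)$; the new coefficients have $v_{\chi}\geq m+\chi(q)$. Iterating this substitution on any monomials that still carry a negative $\chi$-value increases the valuation by at least $m>0$ per round, so after at most $\lceil -\min\chi(q)/m\rceil$ rounds all coefficients sit in $\Z{Q_\chi}$ and $x\in\AA\cdot\Z{Q_\chi}$, as required.

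The main obstacle I expect lies in the termination of the iteration: the argument genuinely uses the strict inequality $v_{\chi}(\lambda_{a,a_{1}})>r$ in (iii) (rather than $\geq r$), for this is what guarantees $m>0$ and hence strictly positive progress at every substitution step; a merely non-strict bound could stall the recursion indefinitely. Beyond that, everything reduces to careful bookkeeping with the valuation properties of Lemma \ref{lem:Properties-valuation-vsubchi} and the crucial translation of the inclusion $A\cdot q_{0}\subseteq\cdots$ into a statement about $A$ itself via the automorphism property $A\cdot q_{0}=A$.
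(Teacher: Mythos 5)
Your argument is correct and follows essentially the same route as the paper: (i)\;$\Rightarrow$\;(iii) is proved by translating by a group element $q_0$ with $\chi(q_0)$ large enough, and (iii)\;$\Rightarrow$\;(ii) by iteratively applying the matrix relations to push the bad exponents into $Q_\chi$. One small improvement on your side: driving the iteration with $m=\min_{a,a_1}v_\chi(\lambda_{a,a_1})>0$ rather than with $r$ handles the permitted case $r=0$ cleanly, whereas the paper's induction, written with step size $r$, degenerates when $r=0$ and needs exactly this finiteness observation to be rescued.
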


\begin{proof}
Assume first that statement (i) holds. 
There exists then a finite subset $\BB$ of $A$ 
so that every element $x \in A$ is a linear combination of the generators 
$b \in \BB$ with coefficients $\nu_b \in\Z{Q_\chi}$.
Since $\AA$ generates $A$ as a $\Z{Q}$-module, 
each generator $b \in \BB$ is a linear combination of $\AA$ with coefficients in $\Z{Q}$,
say $ b = \sum \nolimits _{a \in \AA} a \cdot \mu_{a,b} $.
It follows that 
\[
x =  
\sum \nolimits _{b}  b \cdot  \nu_b 
=
\sum \nolimits _{b} 
\left(\sum \nolimits _{a \in \AA} a \cdot \mu_{a,b}\right) \cdot  \nu_{b} 
=
\sum\nolimits _{a} a \cdot  \sum \nolimits _{b} \mu_{a, b}\nu_b .
\]
Lemma \ref{lem:Properties-valuation-vsubchi} 
allows us to estimate the $v_\chi$-value of the coefficient of the generator $a$ from below: 
\begin{align*}
v_\chi\left(\sum \nolimits _{b} \mu_{a, b}\nu_b \right) 
&\geq 
\min \{v_\chi(\mu_{a,b} \nu_b )\mid b \in \BB \}
\geq  
\min \{v_\chi(\mu_{a,b}) +  v_\chi( \nu_b) \mid b \in \BB \}\\
&\geq
\min \{v_\chi(\mu_{a,b})  \mid b \in \BB \} = m_a.
\end{align*}
This calculation shows that every element of $A$ is a linear combination of the generators $a$ 
with coefficients whose $v_\chi$-values are bounded from below by  
$m = \min \{m_a \mid a \in \AA \}$.
Choose now $q_0 \in Q$ with  $\chi(q_0 ) > r + \min\{0, -m \}$.
For each $a_1 \in \AA$ 
there exists then coefficients $\lambda_{a,a_1}$  with $v_\chi(\lambda_{a.a_1}) \geq m$
so that the equation
\[
a_1 \cdot q_0^{-1} = \sum\nolimits_{a \in \AA} a \cdot \lambda'_{a,a_1}
\]
holds. 
If we set $\lambda_{a,a_1} = \lambda'_{a,a_1} q_0$ 
we obtain a matrix that satisfies requirements a) and b) enunciated in (iii).
\smallskip

Assume next (iii) holds and $(\lambda_{a, a_1})$ is a matrix satisfying requirements a) and b).
We shall prove,  by induction on $m$,    
that $a_{1} \cdot q$ lies in the subgroup $ \sum_{a_1 \in \AA} a_1 \cdot \Z{Q_{\chi}}$ 
for every  $ q\in Q$ with $\chi(q) \geq -m \cdot r$.
If $m=0$ then  $q \in Q_{\chi}$ and so the claim holds;
if it holds for $m \geq 0$ and $\chi(q) \geq -(m+1) \cdot r$,
then
\[
a_1 \cdot q = \left( \sum\nolimits_{a \in \AA} a \cdot \lambda_{a,a_1} \right) \cdot q
= 
\sum\nolimits_{a \in \AA}  a \cdot (\lambda_{a,a_1}\cdot q)
\]
and
$v_{\chi}( \lambda_{a,a_1} \cdot q)= v_{\chi}(\lambda_{a,a_1}) + \chi(q) \geq  -m \cdot r$.
Going back to the definition of  $v_{\chi}$,
one sees that each element $q'$ occurring in the support of $\lambda_{a,a_1} \cdot q$ 
has $\chi$-value greater or equal to $-m \cdot r$ 
and lies thus in  the subgroup $\sum_{a_1 \in \AA} a_1 \cdot \Z{Q_{\chi}}$ by the induction hypothesis.
So statement (ii) is holds.
Since (ii) clearly entails statement (i), the theorem is established.
\end{proof}
\begin{remarks}
\label{remarka:Sigma0-criterion}
a) The above result goes back to \cite{Str81b};
it generalizes a similar proposition of an article by R. Bieri und R. Strebel,
namely \cite[Prop. 2.1]{BiSt80}.

b) The matrix $\left(\lambda_{(a, a_1)} \mid (a, a_1) \in \AA^2 \right)$ 
induces an endomorphism $\varphi_0$ of the free right $\Z{Q}$-module $F_0$ with basis $\AA $.
This endomorphism can be reinterpreted as a chain map lifting the identity of the obvious epimorphism
$\delta_0 \colon  F_0  \epi A$. 
In this form the criterion 
generalizes to a criterion for the invariants $\Sigma^m$ introduced by R. Bieri and B. Renz in \cite{BiRe88}
(see Theorem C).
\end{remarks}

%-----------------------
\subsubsection{Openness of $\Sigma^0(Q;A)$}
\label{sssec:Sigma0-criterion-basic-result}
%-----------------------
%
The $\Sigma^0$-criterion provides one with a cover of $\Sigma^0(Q;A)$ by open subsets
and thus implies that the subset $\Sigma^0(Q;A)$ of the sphere $S(Q)$ is open.
Indeed,
if $(\lambda_{a, a_1} \mid (a,a_1) \in \AA^2)$ is any matrix 
satisfying condition b) in statement (iii) with $r = 0$,
the number
\[
\min \{ v_{\chi}(\lambda_{a,a_1})   \mid (a,a_1) \in \AA^2 \}
=
\min \{ \chi(q) \mid q \in \supp(\lambda_{a,a_1}) \text{ and } (a,a_1) \in \AA^2 \}
\]
is positive.
As only finitely many group elements are involved in this minimum,
the set 
\begin{equation}
\label{eq:Definition-open-cover-Sigma0}
\OO(\AA, (\lambda_{a, a_1} ))= \{[\chi'] \in S(Q) 
\mid  \min \{ v_{\chi'}(\lambda_{a,a_1})   \mid (a,a_1) \in \AA^2 \} > 0 \}
\end{equation}
is a non-empty open subset of $S(Q)$.
This fact, in conjunction with  Proposition  \ref{prp:Sigma0-criterion},
then implies 
\begin{crl}
\label{crl:Openness-Sigma0(Q,A)}
Let $Q$ be a finitely generated group and $A$ a finitely generated right $\Z{Q}$-module.
Then $\Sigma^0(Q;A)$ is an open subset of $S(Q)$.
\end{crl}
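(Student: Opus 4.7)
The plan is to exhibit, for each $[\chi] \in \Sigma^0(Q;A)$, an explicit open neighborhood of $[\chi]$ in $S(Q)$ that is contained in $\Sigma^0(Q;A)$; the neighborhood is the set $\OO(\AA,(\lambda_{a,a_1}))$ defined in \eqref{eq:Definition-open-cover-Sigma0}, where the matrix $(\lambda_{a,a_1})$ is furnished by Proposition \ref{prp:Sigma0-criterion}.

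First I would fix a finite generating set $\AA$ of $A$ and a point $[\chi] \in \Sigma^0(Q;A)$. Applying the equivalence (i) $\Leftrightarrow$ (iii) of Proposition \ref{prp:Sigma0-criterion} with $r = 0$ yields a matrix $(\lambda_{a,a_1})$ with entries in $\Z{Q_{\chi}}$ such that $a_1 = \sum_{a \in \AA} a \cdot \lambda_{a,a_1}$ and $v_\chi(\lambda_{a,a_1}) > 0$ for every pair $(a,a_1) \in \AA^2$. By the very definition of $\OO(\AA,(\lambda_{a,a_1}))$ this shows that $[\chi]$ is contained in $\OO(\AA,(\lambda_{a,a_1}))$.

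Next I would verify that $\OO(\AA,(\lambda_{a,a_1}))$ is open in $S(Q)$. The union $T = \bigcup_{(a,a_1) \in \AA^2} \supp(\lambda_{a,a_1})$ is a \emph{finite} subset of $Q$, and the condition $\min\{ v_{\chi'}(\lambda_{a,a_1}) \} > 0$ reduces to the finite system of strict inequalities $\chi'(q) > 0$ for $q \in T$. Each such inequality is invariant under replacing $\chi'$ by a positive multiple, so it cuts out an open subset of $S(Q)$; the intersection of finitely many such subsets is again open.

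Finally, for any $[\chi'] \in \OO(\AA,(\lambda_{a,a_1}))$ one has $\supp(\lambda_{a,a_1}) \subseteq Q_{\chi'}$ for every $(a,a_1)$, whence $\lambda_{a,a_1} \in \Z{Q_{\chi'}}$. The same matrix then witnesses condition (iii) of Proposition \ref{prp:Sigma0-criterion} for the character $\chi'$ (again with $r=0$), so $[\chi']$ lies in $\Sigma^0(Q;A)$ by the equivalence with (i). I do not foresee a serious obstacle: the substance of the argument is already hidden inside Proposition \ref{prp:Sigma0-criterion}, and what remains is simply the observation that its hypothesis involves only finitely many values of the character, a condition which is manifestly stable under small perturbations of $\chi$.
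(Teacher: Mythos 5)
Your argument is correct and follows essentially the same route as the paper: you use the open sets $\OO(\AA,(\lambda_{a,a_1}))$ from \eqref{eq:Definition-open-cover-Sigma0} together with the equivalence (i)~$\Leftrightarrow$~(iii) of Proposition~\ref{prp:Sigma0-criterion} (with $r=0$) to exhibit, around each $[\chi]\in\Sigma^0(Q;A)$, an open neighbourhood contained in $\Sigma^0(Q;A)$. You simply spell out more explicitly than the paper does why $\OO(\AA,(\lambda_{a,a_1}))$ is open (finitely many strict inequalities $\chi'(q)>0$) and why the same matrix continues to witness condition (iii) for nearby characters.
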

%
%
%%====================
%\subsection{$\Sigma^0$ for polycyclic groups}
%\label{ssec:Sigma0-polycyclic-groups}
%%====================
%%
%Proposition \ref{prp:Sigma0-criterion} gives a criterion for determining
%whether, given a  non-zero character $\chi$,
%a finitely generated $\Z{Q}$-module $A$ remains finitely generated over the smaller ring $\Z{Q_{\chi}}$.
%If $A$ is not  cyclic, but generated by $m>1$ elements,  
%the criterion asks one to find a square matrix of size $m$ with certain properties.
%In this section
%we show that the criterion takes on a simpler form if $Q$ is polycyclic.
%%
%%
%%====================
%\subsection{The $\Sigma^0$-criterion}
%\label{ssec:Sigma0-criterion}
%%====================
%%
%The $\Sigma^0$-criterion implies that $\Sigma^0(Q;A)$ is an open subset of $S(Q)$: 
%it also allows one to prove that a point $[\chi]$ lies in $\Sigma^0(Q;A)$.
%It is available for arbitrary finitely generated groups
%(see \cite[Theorem C]{BiRe88}), but for polycyclic groups a more easily stated version holds.
%
%In this section, we state  and establish the $\Sigma^0$-criterion for polycyclic groups.
%
%-----------------------
\subsubsection{An alternate definition of $\Sigma^0$ for polycyclic groups}
\label{sssec:Alternate-definition-polycyclic-groups}
%-----------------------
%
In the definition of $\Sigma^0$ one asks,
given a non-zero character $\chi$,
whether the finitely generated $Q$-module $A$ is finitely generated over the monoid $Q_\chi$.
The submonoid $Q_\chi$ is, in general, infinitely generated.
So the requirement that $A$ be finitely generated over a \emph{finitely generated} submonoid of $Q_\chi$
is typically a more severe condition than that used in the definition of $Q_\chi$.
For polycyclic groups, however, the two conditions are equivalent.
This is a consequence of
\begin{prp}
\label{prp:Existence-finitely-generated-subring}
Assume $Q'$, the derived group of $Q$, is finitely generated.
Then the following conditions are equivalent for every non-zero character $\chi$ 
and every finitely generated $\Z{Q}$-module $A$: 
\begin{enumerate}[(i)]
\item  
$A$ is finitely generated over the monoid ring $\Z{Q_{\chi}}$, 
\item
$A$ is finitely generated over the monoid ring $\Z{M}$ of a finitely generated submonoid 
$M \subseteq Q_{\chi}$. 
\end{enumerate}
\end{prp}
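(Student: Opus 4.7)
Direction (ii) $\Rightarrow$ (i) is immediate. For (i) $\Rightarrow$ (ii), my plan is to extract from the $\Sigma^0$-witness matrix a finite subset of $Q_\chi$ which, together with generators of $Q'$ and of $Q/Q'$, spans the required submonoid $M$; the hypothesis that $Q'$ is finitely generated will be what absorbs the non-commutativity of $Q$ in the closure step at the end.

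Apply Proposition \ref{prp:Sigma0-criterion} with $r=0$ to choose a finite generating set $\AA$ of $A$ and a matrix $(\lambda_{a,b})_{(a,b)\in\AA^2}$ over $\Z{Q_\chi}$ with $b = \sum_{a\in\AA} a\cdot\lambda_{a,b}$ and $\epsilon := \min\{v_{\chi}(\lambda_{a,b}) : \lambda_{a,b}\neq 0\} > 0$. Let $T = \bigcup_{a,b}\supp(\lambda_{a,b}) \subseteq Q_\chi$. Choose $x_1,\dots,x_n \in Q$ whose images generate $Q/Q'$; inverting where necessary, arrange $\chi(x_i) \geq 0$ for every $i$. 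Using the hypothesis, pick a finite generating set $K$ of $Q'$; since $\chi$ factors through $Q/Q'$ one has $\chi(K) = \{0\}$.

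For each $i$ with $\chi(x_i) > 0$, iterate the rewriting rule $b \cdot w \mapsto \sum_{a\in\AA} a\cdot(\lambda_{a,b}\cdot w)$, starting from $a\cdot x_i^{-1}$ for $a\in\AA$, and terminate each branch as soon as its current monomial has $\chi$-value $\geq 0$. Each step raises the $\chi$-value by at least $\epsilon$, so the branching halts within $\lceil \chi(x_i)/\epsilon\rceil$ rounds, leaving only finitely many terminal monomials; their group parts make up a finite set $R_i \subseteq Q_\chi$. One obtains reduction formulae
\[
a \cdot x_i^{-1} = \sum\nolimits_{b \in \AA} b \cdot \xi_{b,a,i}, \qquad \supp(\xi_{b,a,i}) \subseteq R_i,
\]
for every $a \in \AA$. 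Define the finitely generated submonoid
\[
M := \bigl\langle T \cup R_1 \cup \cdots \cup R_n \cup \{x_i\}_{i=1}^n \cup \{x_i^{-1} : \chi(x_i) = 0\} \cup K \cup K^{-1}\bigr\rangle \subseteq Q_\chi.
\]

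To conclude $A = \sum_{b\in\AA} b\cdot \Z M$, note the right-hand side is a $\Z M$-submodule containing $\AA$; it thus suffices to check stability under right multiplication by each element of the generating set $\{x_i^{\pm 1}\}_{i=1}^n \cup K \cup K^{-1}$ of $Q$, i.e., $a\cdot(ms) \in \sum_{b} b\cdot \Z M$ for $a\in\AA$, $m\in M$, and each such $s$. When $s \in M$ this is trivial. The sole remaining case is $s = x_i^{-1}$ with $\chi(x_i) > 0$: the commutator identity $x_i m x_i^{-1} = [x_i,m]\,m$, together with $Q' = \langle K\cup K^{-1}\rangle \subseteq M$, gives $m x_i^{-1} = x_i^{-1} m'$ with $m' := [x_i,m]\,m \in M$, whence
\[
a\cdot(m x_i^{-1}) = (a\cdot x_i^{-1})\cdot m' = \sum\nolimits_{b\in\AA} b\cdot \xi_{b,a,i}\,m' \in \sum\nolimits_{b\in\AA} b \cdot \Z M.
\]
The main obstacle, and the only place where finite generation of $Q'$ is consumed, is precisely this closure step: without the inclusion $Q' \subseteq M$ the conjugate $x_i m x_i^{-1}$ could leave $M$ and the reduction of $m\cdot x_i^{-1}$ would fail to close.
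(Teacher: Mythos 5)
Your proof is correct. It shares the paper's overall design --- extract finite data from Proposition \ref{prp:Sigma0-criterion}, assemble a finitely generated submonoid $M \subseteq Q_\chi$ that contains $Q'$ so that conjugation by generators of $Q$ stays inside $M$, and verify $A = \sum_{a\in\AA} a\cdot\Z{M}$ --- but the descent mechanism is different. The paper picks one element $y_0 \in Q_\chi$ whose image in $Q/\ker\chi$ is the product of a basis, applies Proposition \ref{prp:Sigma0-criterion} with $r = \chi(y_0)$, and so obtains a matrix whose entries, after multiplication by $y_0^{-1}$, still have supports in $Q_\chi$; a single relation $A_0\cdot y_0^{-1}\subseteq A_0$ then closes the loop, because $M$ is taken to contain all of $\ker\chi$ together with the chosen basis elements, so for each $q \in Q$ some $q\,y_0^{\ell}$ lands in $M$. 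You instead take $r=0$ and compensate by iterating the rewriting rule for each descent direction $x_i^{-1}$ with $\chi(x_i)>0$ until every monomial reaches $Q_\chi$, controlling termination with the uniform lower bound $\epsilon$; correspondingly, your $M$ contains only $Q'$ (not all of $\ker\chi$), and you check closure directly against a finite monoid-generating set of $Q$. Both routes are sound; the paper's choice $r=\chi(y_0)$ avoids the iteration at the cost of a slightly more elaborate setup, while your version stays closer to the $r=0$ statement of Proposition \ref{prp:Sigma0-criterion}. A minor remark: the set $T$ that you list among the generators of $M$ plays no role in your closure argument --- only the $R_i$, the $x_i^{\pm1}$, and $K\cup K^{-1}$ are used --- so it can be dropped.
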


\begin{proof}
Set $N =\ker \chi$ and $\bar{Q} = Q/N$. 
Then $\bar{Q}$ is free abelian; 
so there exists a finite subset $\YY= \{y_{1}, \ldots, y_{f} \}$
which maps onto a basis of $\bar{Q}$; 
replacing, if need be, some elements $y_{j} \in \YY$ by their inverses, 
we may assume that $\YY \subset Q_{\chi}$. 
Set $y_{0} = y_{1} \cdots y_{f}$  and $r_0= \chi(y_{0})$. Then $r_{0}> 0.$

Assume the module $A$ is finitely generated over $\Z{Q_{\chi}}$, 
say by the finite set $\AA$. 
Then implication (i) $\Rightarrow$ (iii) of Proposition \ref{prp:Sigma0-criterion}
provides one with a matrix $(\lambda_{a,a_1})$ that satisfies the equation
\[
a_1 = \sum\nolimits_{a \in \AA} a \cdot \lambda_{a,a_1}
\]
for each $a_1 \in \AA$ 
and whose entries satisfy the inequalities $v_{\chi}(\lambda_{a,a_1}) > r_0 = \chi(y_0)$.
Let $\ZZ$ be the union of the supports of the $\card(\AA)^2$ entries of $(\lambda_{a,a_1})$
and let $M$ be the monoid generated by set
\[
N  \cup \YY \cup \{z  \cdot y_0^{-1} \mid z \in \ZZ\}.
\]
Notice that $M$ is contained in $Q_\chi$.
By hypothesis, the derived group $Q'$ of $Q$ is finitely generated;
hence so is $N = \ker \chi$ and the monoid $M$.
 Moreover,
as $N$ contains the commutator subgroup of $Q$,
the monoid $M$ is stable under conjugation by the elements of $Q$. 
 
Set  $A_{0}= \sum\nolimits_{a \in \AA} a \cdot \Z{M}$.
Then $A_{0}$ is  a finitely generated $\Z{M}$-module. 
To prove statement (ii), 
it suffices therefore to show that $A_{0}$ coincides with $A$. 
As the monoid $M$ contains the kernel $N$ of $\chi$ and a basis $\YY$ of $\bar{Q} =Q/N$ 
there exists, for every element $q \in Q$, an integer $\ell ≥0$ such that $ q  \cdot y_0^\ell\in M$.
So we need only show that $A_{0} \cdot y_0^{-1} \subseteq A_{0}$.

For every $\mu \in \Z{M}$ and $a_{1} \in \AA$ the following computation is valid:
\[
a_1 \cdot \mu \cdot y_{0}^{-1}  
=
a_{1} \cdot y_0^{-1} \cdot y_0 \mu y_0^{-1}
=
\left(\sum\nolimits_{a \in \AA} a \cdot \lambda_{a,a_1}\cdot  y_{0}^{-1}\right)   \cdot y_0 \mu y_0^{-1}\]
Since $\Z{M}$ is invariant under conjugation by $Q$ 
and as the elements $ \lambda_{a,a_1} \cdot y_{0}^{-1}$ belong to $\Z{M}$ by the construction of $M$,
the computation shows that  $A_{0}  \cdot y_{0}^{-1}\subseteq A_{0}$.
But if so,
$A$ coincides with $A_{0}$,
whence $A$ is generated by the finite set $\AA$  over the finitely generated submonoid $M$ of $Q_{\chi}$. 

The converse is clear.
\end{proof}

\begin{remark}
\label{remark:Origin-prp:Existence-finitely-generated-subring}
Proposition \ref{prp:Existence-finitely-generated-subring} 
goes back to \cite{Str81b}.
Its proof uses an idea employed in establishing Proposition  2.4(ii) in \cite{BNS}.
\end{remark}
%-----------------------
\subsubsection{Some consequences for polycyclic groups}
\label{sssec:Consequences-polycyclic groups}
%-----------------------
%
We are now able  to show 
that,  for every finitely generated module $A$ over a polycyclic group $Q$,
the invariant $\Sigma^0(Q;A)$ can be determined by looking at the invariants of cyclic submodules. 
The first step on the route to this result is
\begin{prp}
\label{prp:Invariant-submodule-polycyclic-group}
Let $Q$ be a polycyclic group 
and $A_{1}$ a submodule of the finitely generated $\Z{Q}$-module $A$. 
Then $A_{1}$ is finitely generated over $\Z{Q}$ and 
\begin{equation}
\label{eq:Invariant-submodule-polycyclic-group}
\Sigma^{0}(Q;A) \subseteq \Sigma^{0}(Q;A_{1}). 
\end{equation}
\end{prp}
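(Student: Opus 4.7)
The statement has two assertions, and I would tackle them in order.

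For the first assertion, the polycyclic hypothesis on $Q$ is decisive: by the classical theorem of P. Hall, the group ring $\Z{Q}$ is right Noetherian. Hence every $\Z{Q}$-submodule of the finitely generated module $A$, in particular $A_1$, is finitely generated over $\Z{Q}$.

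For the inclusion $\Sigma^0(Q;A) \subseteq \Sigma^0(Q;A_1)$, I would take $[\chi] \in \Sigma^0(Q;A)$ and invoke Proposition \ref{prp:Existence-finitely-generated-subring} to produce a finitely generated submonoid $M \subseteq Q_\chi$ over whose monoid ring $\Z{M}$ the module $A$ is still finitely generated. The construction of $M$ in the proof of that proposition provides more: $M$ contains $N = \ker \chi$ as a normal subgroup, and the quotient $M/N$ is a finitely generated submonoid of the free abelian group $\bar{Q} = Q/N$. The entire argument will then hinge on the claim that $\Z{M}$ itself is right Noetherian, for once this is known, $A_1$ is a $\Z{M}$-submodule of the finitely generated $\Z{M}$-module $A$, hence is finitely generated over $\Z{M} \subseteq \Z{Q_\chi}$, which exhibits $[\chi]$ in $\Sigma^0(Q;A_1)$.

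To verify the Noetherianity of $\Z{M}$, I would exploit its crossed-product structure. Since $N$ is normal in $Q$, conjugation by elements of $M$ preserves $\Z{N}$, which is itself right Noetherian by a second application of Hall's theorem to the polycyclic group $N$. Since $M/N$ is a finitely generated commutative cancellative monoid, I can lift a finite monoid generating set to elements $t_1, \ldots, t_s \in M$ and realize $\Z{M}$ as a homomorphic image of the iterated skew polynomial ring $\Z{N}[t_1, \ldots, t_s; \sigma_1, \ldots, \sigma_s]$, where the $\sigma_i$ are the automorphisms of $\Z{N}$ induced by conjugation. The Hilbert basis theorem for skew polynomial extensions applied $s$ times then gives right Noetherianity of the iterated extension, and hence of its quotient $\Z{M}$.

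The main obstacle is precisely this Noetherianity step. In the abelian case of \cite{BiSt80}, $\Z{M}$ is an ordinary commutative monoid ring finitely generated over $\Z$, and Noetherianity is immediate from the classical Hilbert basis theorem. In the polycyclic setting considered here, one must simultaneously control the non-commutativity of $\Z{N}$ (through the conjugation automorphisms $\sigma_i$) and the fact that $M/N$, while finitely generated and commutative, need not be free. Organizing the crossed-product presentation so that the iterated skew polynomial argument applies cleanly, and checking that the relations holding in $M/N$ descend to a quotient of a genuine skew polynomial ring, is the only non-routine part of the plan.
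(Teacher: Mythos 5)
Your proposal follows the paper's proof exactly in outline: the Noetherianity of $\Z{Q}$ via Hall's theorem settles the first assertion, and for the inclusion you pass, via Proposition \ref{prp:Existence-finitely-generated-subring}, to a finitely generated submonoid $M\subseteq Q_{\chi}$ and reduce everything to the Noetherianity of $\Z{M}$. The only place you depart from the paper is in how that Noetherianity is obtained: the paper simply cites a variant of Hilbert's Basis Theorem from Passman's book (Chapter~10, p.\;423, Theorem~2.6), whereas you propose to prove it directly by presenting $\Z{M}$ as a quotient of an iterated skew polynomial ring over $\Z{N}$. Your sketch is sound, but one point is stated loosely: for the iterated Hilbert basis argument to apply, $\sigma_i$ must be an automorphism not merely of $\Z{N}$ but of the whole ring $\Z{N}[t_1;\sigma_1]\cdots[t_{i-1};\sigma_{i-1}]$ built at the previous step. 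This does work out, because $N\supseteq Q'$ is normal, $M$ is stable under conjugation by $Q$ (a point established in the proof of Proposition \ref{prp:Existence-finitely-generated-subring}), and $t_it_jt_i^{-1}=n_{ij}t_j$ with $n_{ij}\in N$, so conjugation by $t_i$ sends the $(i-1)$-st ring into itself with inverse given by conjugation by $t_i^{-1}$. You flag this organization as the non-routine part, and rightly so; the paper's appeal to the published crossed-product/Noetherian result is precisely what lets it bypass that bookkeeping.
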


\begin{proof}
The group ring of a polycyclic group is noetherian 
(by P. Hall’s extension of Hilbert’s Basis Theorem; see, \eg, \cite[15.3.3]{Rob96})
and so $A_{1}$, 
being a submodule of a finitely generated $\Z{Q}$-module, 
is finitely generated over $\Z{Q}$. 

Assume now that $\chi \colon  Q \to \R$ is a non-zero character
and that $A$ is finitely generated over the monoid ring $\Z{Q_{\chi}}$. 
As $Q$ is polycyclic
Proposition \ref{prp:Existence-finitely-generated-subring} applies and 
tells us that 
$A$ is finitely generated over the monoid ring  $\Z{M}$  of a finitely generated
submonoid  $M$ of $Q_{\chi}$
We may, and shall, assume that $M$ contains the derived group of $Q$ 
and can then apply another variant of Hilbert’s Basis Theorem 
(see Chapter 10,  p.\;423, Theorem 2.6 in \cite{Pas85}) 
and thus infer that $\Z{M}$ is a noetherian ring. 
The submodule $A_{1}$ is therefore finitely generated over $\Z{M}$, 
hence a fortiori over $\Z{Q_{\chi}}$ and  so $[\chi]$ belongs to $\Sigma^{0}(Q;A_1)$. 
\end{proof}

As a first consequence of Proposition  \ref{prp:Invariant-submodule-polycyclic-group},
we have the following refinement of Lemma \ref{lem:ses-modules}:
\begin{crl}
\label{crl:Invariants-ses-over-polycyclic-group}
Assume $Q$ is a polycyclic group. 
Then the invariants of the terms of a short exact sequence $A_{1}\mono A \epi A_{2}$
of finitely generated $\Z{Q}$-modules are related by the identity 
\begin{equation}
\label{eq:Invariants-ses-over-polycyclic-group}
\Sigma^{0}(Q;A) = \Sigma^0(Q;A_{1}) \cap \Sigma^0(Q;A_{2}). 
\end{equation}
\end{crl}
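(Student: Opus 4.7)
The plan is straightforward: combine the two-sided inclusion from Lemma \ref{lem:ses-modules} with the submodule inclusion from Proposition \ref{prp:Invariant-submodule-polycyclic-group}. All the hard work has already been done upstream; what remains is to assemble three already-proved inclusions.

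First I would recall from Lemma \ref{lem:ses-modules} the chain
\[
\Sigma^0(Q;A_{1}) \cap \Sigma^0(Q;A_{2}) \subseteq \Sigma^0(Q;A) \subseteq \Sigma^0(Q;A_{2}),
\]
which immediately gives the inclusion ``$\supseteq$'' in \eqref{eq:Invariants-ses-over-polycyclic-group} and also half of the reverse inclusion, namely $\Sigma^0(Q;A) \subseteq \Sigma^0(Q;A_{2})$.

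For the remaining half, that is $\Sigma^0(Q;A) \subseteq \Sigma^0(Q;A_{1})$, I would invoke Proposition \ref{prp:Invariant-submodule-polycyclic-group}: since $Q$ is polycyclic and $A_1$ is a submodule of the finitely generated module $A$, the proposition guarantees both that $A_1$ is itself finitely generated as a $\Z{Q}$-module (so that $\Sigma^0(Q;A_1)$ is defined at all) and that every character $[\chi]\in\Sigma^0(Q;A)$ also lies in $\Sigma^0(Q;A_1)$. Intersecting the two inclusions yields
\[
\Sigma^0(Q;A) \subseteq \Sigma^0(Q;A_{1}) \cap \Sigma^0(Q;A_{2}),
\]
which together with the opposite inclusion from Lemma \ref{lem:ses-modules} establishes the desired equality.

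There is essentially no new obstacle at this stage, since the substantive point — that submodules of finitely generated modules over polycyclic groups inherit finite generation over monoid rings $\Z{Q_\chi}$, via Hilbert's basis theorem applied to a suitably chosen finitely generated monoid $M\subseteq Q_\chi$ containing $Q'$ — is the content of Proposition \ref{prp:Invariant-submodule-polycyclic-group}. The only thing to be careful about when writing up the proof is to observe explicitly that Proposition \ref{prp:Invariant-submodule-polycyclic-group} applies, i.e.\ that $A_1$ is finitely generated, before writing down $\Sigma^0(Q;A_1)$.
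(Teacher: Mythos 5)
Your argument is exactly the one the paper intends (the corollary is stated without a written-out proof, but the surrounding text flags it as a direct consequence of Lemma \ref{lem:ses-modules} together with Proposition \ref{prp:Invariant-submodule-polycyclic-group}). The proposal is correct and complete.
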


Proposition \ref{prp:Invariant-submodule-polycyclic-group} 
and Corollary \ref{crl:Invariants-ses-over-polycyclic-group}
together lead then to a $\Sigma^0$-criterion for polycyclic groups:
\begin{crl}
 \label{crl:Sigma0-criterion-modules-polycyclic-groups}
Let $Q$ be a polycyclic group, 
 $\chi \colon Q \to \R$ a non-zero character and $A$ a $\Z{Q}$-module 
 that is generated by the finite set $\AA$. 
 Then the following conditions are equivalent: 
\begin{enumerate}[(i)]
 \item $[\chi] \in \Sigma^0(Q;A)$, 
\item for every $a \in \AA$, there exists,  an element $\lambda_a \in Q_\chi$ so that 
$a = a\cdot \lambda_a$ and $v_{\chi}(\lambda_a ) > 0$.
\end{enumerate}
   \end{crl}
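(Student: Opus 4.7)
The plan is to reduce to the case of cyclic submodules $a \cdot \Z{Q}$, each of which is generated by a single element, so that the matrix criterion of Proposition \ref{prp:Sigma0-criterion} collapses to a single identity $a = a \cdot \lambda_a$. Everything else is a matter of transferring $\Sigma^0$-information between $A$, its cyclic submodules, and their direct sum, using the polycyclic tools already at hand.

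For the implication (i) $\Rightarrow$ (ii), I fix $a \in \AA$ and look at the cyclic submodule $a \cdot \Z{Q} \subseteq A$. Since $Q$ is polycyclic, Proposition \ref{prp:Invariant-submodule-polycyclic-group} says that this submodule is finitely generated over $\Z{Q}$ and that $[\chi] \in \Sigma^0(Q; a \cdot \Z{Q})$. Applying Proposition \ref{prp:Sigma0-criterion} (implication (i) $\Rightarrow$ (iii)) to the cyclic module $a \cdot \Z{Q}$ with generating set $\{a\}$ and $r = 0$ then provides a $1 \times 1$ matrix, i.e.\ an element $\lambda_a \in \Z{Q_\chi}$ satisfying $a = a \cdot \lambda_a$ and $v_\chi(\lambda_a) > 0$, which is exactly condition (ii).

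For (ii) $\Rightarrow$ (i), the element $\lambda_a$ supplied by (ii) is the single entry of the $1 \times 1$ matrix demanded by statement (iii) of Proposition \ref{prp:Sigma0-criterion}, applied to the cyclic module $a \cdot \Z{Q}$ with generating set $\{a\}$ and $r = 0$. The reverse implication (iii) $\Rightarrow$ (i) of that proposition therefore yields $[\chi] \in \Sigma^0(Q; a \cdot \Z{Q})$ for every $a \in \AA$. Next I assemble these cyclic modules: $A = \sum_{a \in \AA} a \cdot \Z{Q}$ is an epimorphic image of the finite direct sum $F = \bigoplus_{a \in \AA} a \cdot \Z{Q}$. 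Iterating the first inclusion of Lemma \ref{lem:ses-modules} over the obvious short exact sequences built from the summands yields $[\chi] \in \Sigma^0(Q; F)$, and the second inclusion of the same lemma then transfers this to the quotient $A$.

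I do not anticipate any substantial obstacle; the content of the corollary is the observation that the matrix condition degenerates to a scalar condition on a cyclic module. The only point deserving care is that the two implications invoke distinct earlier results: Proposition \ref{prp:Invariant-submodule-polycyclic-group} (which genuinely needs $Q$ polycyclic) to pass from $A$ to the cyclic submodule $a \cdot \Z{Q}$, and the generally valid Lemma \ref{lem:ses-modules} to pass from the cyclic pieces back to $A$ via their direct sum and its quotient.
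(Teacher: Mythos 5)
Your argument is correct, and for the implication (i) $\Rightarrow$ (ii) it is exactly the paper's: pass to the cyclic submodule $a\cdot\Z{Q}$ via Proposition \ref{prp:Invariant-submodule-polycyclic-group} (which is where polycyclicity enters), then extract $\lambda_a$ from the $1\times1$ version of Proposition \ref{prp:Sigma0-criterion}. For the converse, however, your route is longer than the intended one. The family $(\lambda_a)_{a\in\AA}$ supplied by (ii) assembles into a \emph{diagonal} matrix $\left(\delta_{a,a_1}\lambda_a\right)$ for the module $A$ with generating set $\AA$ itself; off-diagonal entries are $0$, whose $v_\chi$-value is $\infty>0$, so conditions a) and b) of Proposition \ref{prp:Sigma0-criterion}(iii) hold with $r=0$, and the implication (iii) $\Rightarrow$ (i) of that proposition gives $[\chi]\in\Sigma^0(Q;A)$ directly — this is what the paper means by ``the converse is covered by Proposition \ref{prp:Sigma0-criterion}.'' Your detour through $[\chi]\in\Sigma^0(Q;a\cdot\Z{Q})$ for each $a$, the direct sum $F=\bigoplus_a a\cdot\Z{Q}$, iterated application of the first inclusion in Lemma \ref{lem:ses-modules}, and finally the quotient $F\epi A$ via its second inclusion, is valid (note the kernel of $F\epi A$ is finitely generated because $\Z{Q}$ is noetherian, so Lemma \ref{lem:ses-modules} applies as stated), but it reinvents machinery the diagonal-matrix observation makes superfluous. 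The gain from the short route is that it avoids Lemma \ref{lem:ses-modules} entirely and shows the corollary is nothing but the diagonal specialization of the general matrix criterion.
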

    
\begin{proof}
 If $[\chi] \in \Sigma^0(Q;A)$, 
 Proposition \ref{prp:Invariant-submodule-polycyclic-group}
 implies that $[\chi]$ lies in the invariant of every cyclic submodule  $A_a = a \cdot \Z{Q}$ with $a \in \AA$.
Proposition \ref{prp:Sigma0-criterion}
next provides one, for each generator $a \in \AA$, with an element $\lambda_a$ having the stated properties. 
So hypothesis (i) implies assertion (ii); 
the converse is covered by Proposition \ref{prp:Sigma0-criterion}. 
\end{proof}
 %
%
%====================
%
\section{Tame modules}
\label{sec:Tame-modules}
%
%====================
%
In the next section, 
finitely presented groups $G$ will be constructed 
that are semidirect products $Q \ltimes A$ 
of an abelian normal subgroup $A$ by a nilpotent group $Q$.
The module $A$ will be assumed to be \emph{tame}. 
In this section, tame modules are defined and discussed.
\begin{definition}
\label{definition:Tame-module}
Let $Q$ be a finitely generated \emph{nilpotent} group 
and $A$ a finitely generated \emph{right} module over $\Z{Q}$.
We call $A$ \emph{tame} 
if there exists a central series $\{1\} = Q_{k+1} < Q_k < \cdots < Q_2 < Q_1 = Q$ of $Q$ 
and a finite  $\Z{Q}$-generating set $\AA \subset A$ such that
\begin{equation}
\label{eq:Tame-module}
\Sigma^0(Q_i;\AA \cdot \Z{Q_i}) \cup  -\Sigma^0(Q_i;\AA \cdot \Z{Q_i}) \supseteq S(Q_i, Q_{i+1}) 
\text{ for } 1 \leq i \leq k.
\end{equation}
\end{definition}
%
%====================
\subsection{Dependence on the generating set}
\label{ssec:Dependence-generating-set}
%====================
%
We first show that a $\Z{Q}$-module 
which is tame with respect to a central series $\{Q_i\}$ 
and some finite $\Z{Q}$-generating set $\AA \subset A$,
is tame with respect to $\{Q_i\}$ and any other finite generating set $\BB \subset A$.

\begin{prp}
\label{prp:Independence-generating-set}
Let $Q$ be a finitely generated nilpotent group and $\{Q_i \}$ a central series of $Q$.
If $A$ is a finitely generated $\Z{Q}$-module and $\AA$, $\BB$ are two finite $\Z{Q}$-generating sets of $A$ 
then
\begin{equation}
\label{eq:Independence-generating-system}
\Sigma^0(Q_i; \AA \cdot \Z{Q_i}) \cap S(Q_i,Q_{i+1}) = \Sigma^0(Q_i; \BB \cdot \Z{Q_i})\cap S(Q_i,Q_{i+1})
\end{equation}
for $ i \in \{1,2, \ldots, k\}$.
\end{prp}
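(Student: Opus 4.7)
The plan is to prove the inclusion
$\Sigma^0(Q_i;\AA\cdot\Z{Q_i})\cap S(Q_i,Q_{i+1})\subseteq \Sigma^0(Q_i;\BB\cdot\Z{Q_i})$
and then invoke symmetry. The key structural observation is that, for $[\chi]\in S(Q_i,Q_{i+1})$, the inclusion $[Q,Q_i]\subseteq Q_{i+1}\subseteq\ker\chi$ forces $\chi$ to be invariant under $Q$-conjugation on $Q_i$; consequently, the submonoid $(Q_i)_\chi$ and the subring $\Z{(Q_i)_\chi}$ of $\Z{Q_i}$ are stable under conjugation by every element of $Q$. This is what will allow $\chi$-positive coefficients to be commuted past elements of $Q$ that lie outside $Q_i$.

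Given $[\chi]$ in the left-hand side, I first use that $\AA$ generates $A$ over $\Z{Q}$ to write each $b\in\BB$ as $b=\sum_{a}a\cdot\mu_{a,b}$ with $\mu_{a,b}\in\Z{Q}$, and split each $\mu_{a,b}$ along cosets of the normal subgroup $Q_i\triangleleft Q$ (using normality to rewrite $t\nu t^{-1}\in\Z{Q_i}$ for $\nu\in\Z{Q_i}$). This produces a finite set $T\subset Q$ and coefficients $\lambda_{a,b,t}\in\Z{Q_i}$ with $b=\sum_{a,t}(a\cdot t)\cdot\lambda_{a,b,t}$ for every $b\in\BB$. Next I use the hypothesis $[\chi]\in\Sigma^0(Q_i;\AA\cdot\Z{Q_i})$ to pick a finite set $\CC\subseteq \AA\cdot\Z{Q_i}$ with $\AA\cdot\Z{Q_i}=\CC\cdot\Z{(Q_i)_\chi}$, and form the $\Z{Q_i}$-submodule $M:=\CC T\cdot\Z{Q_i}$ of $A$. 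The heart of the argument is the claim $M=\CC T\cdot\Z{(Q_i)_\chi}$: for $c\in\CC$, $t\in T$, $\nu\in\Z{Q_i}$ one rewrites $ct\nu=c(t\nu t^{-1})t$, notes that $c\cdot(t\nu t^{-1})\in\CC\cdot\Z{(Q_i)_\chi}$, and pushes the resulting $\Z{(Q_i)_\chi}$-coefficient back across $t$ without losing $\chi$-positivity thanks to the $Q$-invariance above. The same computation, applied to elements of $\AA$, shows that $\BB\cdot\Z{Q_i}\subseteq M$.

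To conclude, since $Q$ is finitely generated nilpotent the subgroup $Q_i$ is polycyclic, so Proposition~\ref{prp:Invariant-submodule-polycyclic-group} applies to the inclusion $\BB\cdot\Z{Q_i}\subseteq M$ and yields $\Sigma^0(Q_i;M)\subseteq \Sigma^0(Q_i;\BB\cdot\Z{Q_i})$; combined with $[\chi]\in\Sigma^0(Q_i;M)$ this proves the desired membership. The main obstacle, in my view, is conceptual rather than computational: the modules $\AA\cdot\Z{Q_i}$ and $\BB\cdot\Z{Q_i}$ are generally distinct $\Z{Q_i}$-submodules of $A$, so one cannot rewrite one generating set into the other while remaining inside $\Z{Q_i}$. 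The ambient $\Z{Q}$-structure must be used, and the reason this does not destroy the $\chi$-positivity of the relevant coefficients is precisely the hypothesis $[\chi]\in S(Q_i,Q_{i+1})$, which confers $Q$-conjugation invariance on the subring $\Z{(Q_i)_\chi}$.
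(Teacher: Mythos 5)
Your proof is correct and runs on the same engine as the paper's: the observation that $\chi$ vanishing on $Q_{i+1}\supseteq[Q,Q_i]$ makes $(Q_i)_\chi$ invariant under $Q$-conjugation, so that $\chi$-positive $\Z{Q_i}$-coefficients can be slid past the finitely many elements of $Q$ occurring when $\BB$ is expressed in terms of $\AA$, after which Proposition~\ref{prp:Invariant-submodule-polycyclic-group} is applied to the resulting $\Z{Q_i}$-overmodule. The paper packages the bookkeeping slightly differently (reducing first to $\BB=\AA\cup\{b\}$ and then to cyclic pieces $\AA q\cdot\Z{Q_i}$ via an intersection of Sigma-sets), but the decisive conjugation computation and the appeal to the noetherian submodule result are identical to yours.
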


\begin{proof}
We start with two easy reductions.
Since $\AA \cup \BB$ is also a finite $\Z{Q}$-generating set we can assume that $\AA \subset \BB$;
as $\BB$ is finite, 
it suffices to treat the case where $\BB = \AA \cup \{b\}$.

As $\AA$ generates the module $A$ the element $b$ of $A$ can be written as 
$b = \sum\nolimits_{a \in \AA} a \cdot \lambda_a$
where each $\lambda_a$ is an element of $\Z{Q}$.
Let $E$ denote the union of the supports of all the elements $\lambda_a$ and of  the singleton $\{1_Q\}$.
Then the chain of inclusions
\[
\AA \cdot \Z{Q_i}
 \subseteq \BB \cdot \Z{Q_i} 
= (\AA \cup \{b\}) \cdot \Z{Q_i} 
 \subseteq \AA{E}\cdot \Z{Q_i}
\]
holds for every $i \in \{1, \ldots, k\}$.
By Proposition \ref{prp:Invariant-submodule-polycyclic-group}
this chain of inclusions gives rise to to a chain of Sigma-invariants,
namely
\begin{equation}
\label{eq:Descending-chain-of-Sigmas}
\Sigma^0(Q_i;\AA \cdot \Z{Q_i})  \supseteq \Sigma^0(Q_i;\BB \cdot \Z{Q_i})
 \supseteq \Sigma^0(Q_i;\AA{E}\cdot \Z{Q_i}).
\end{equation}
Now the submodule $ \AA{E}\cdot \Z{Q_i}$ will be finitely generated over a subring $R$ of $\Z{Q_i}$ 
if each summand $\AA q \cdot \Z{Q_i}$ with $q \in E$  is finitely generated over  $R$,
whence the inclusion
\begin{equation}
\label{eq:Passing-to-individual-elements}
\Sigma^0(Q_i;\AA{E}\cdot \Z{Q_i}) \supseteq \bigcap\nolimits_{q \in E} \Sigma^0(Q_i;\AA q \cdot \Z{Q_i}).
\end{equation}
In view of this inclusion and the chain \eqref{eq:Descending-chain-of-Sigmas}
it suffices therefore to show 
that the intersection $\Sigma^0(Q_i;\AA q \cdot \Z{Q_i}) \cap  S(Q_i,Q_{i+1})$ does not depend on $q$.

Consider a point $[\chi] \in S(Q_i, Q_{i+1})$.
By Proposition \ref{prp:Invariant-submodule-polycyclic-group},
this point lies in $\Sigma^0(Q_i;\AA q \cdot \Z{Q_i})$ if, and only if, 
$\AA q \cdot \Z{Q_i} = \AA q \cdot \Z({Q_{i \chi}})$.
Now $Q_i$ is normal in $Q$ and so $\AA q \cdot \Z{Q_i}= (\AA \cdot \Z{Q_i})q$;
as $Q_i$ is central in $Q$ \emph{modulo} $Q_{i+1}$ and as $\chi$ vanishes on $Q_{i+1}$,
one has also $Q_{i \chi} = q Q_{i \chi} q^{-1}$,
and thus
\[
\AA q \cdot \Z({Q_{i \chi}}) = \AA \cdot q (\Z{Q_{i \chi}}) q^{-1} \cdot q 
= \AA \cdot   \Z{Q_{i \chi}} \cdot q. 
\]
So $\AA q \cdot \Z{Q_i} = \AA q \cdot \Z({Q_{i \chi}})$ holds precisely if
$(\AA  \cdot \Z{Q_i}) q = (\AA \cdot \Z{Q_{i \chi}}) q$.
\end{proof}

\begin{remarks}
\label{remark:Independence-generating-set}
a) The previous proposition implies that 
a finitely generated $\Z{Q}$-module $A$ which is tame with respect to a central series  of $Q$ 
and some finite generating set,
is tame with respect to this series and any other finite generating set.
In the sequel we shall call a finitely generated $\Z{Q}$-module $A$ 
\emph{tame with respect to a central series $\{Q_i \}$ of $Q$},
if it is tame with respect to $\{Q_i \}$ and some finite generating set.

b) I have not been able to determine how the tameness of a module $A$ depends on the central series $\{Q_i \}$. 
One might hope that a module which is tame with respect to some central series $\{Q_i \}$
is also tame with respect to some canonical central series,  for instance with respect to the upper central series. 
(The lower central series does not qualify; 
indeed, the group $G_2$ discussed in Theorem 2 of \cite{RoSt82} shows 
that a module which is tame with respect to the upper central series 
need not be tame with respect to the lower central series.)

The construction in section \ref{sec:Construction-groups} will therefore be carried out for an arbitrary central series;
fortunately, the construction for a general central series is no more involved
than that for an explicitly known central series.
\end{remarks}

%====================
\subsection{Submodules and quotient modules}
\label{ssec:Submodules-tame-modules}
%====================

%
%
Proposition \ref{prp:Independence-generating-set}
has many useful consequences. 
A first one is the fact that submodules and quotient modules of tame modules are tame:
\begin{crl}
\label{crl:Tameness-sub-and-quotient-modules}
Let $Q$ be a finitely generated nilpotent group,
$\{Q_i \}$ a central series of $Q$ and $A$ is a finitely generated $\Z{Q}$-module.
Suppose $A$ is tame with respect to $\{Q_i \}$.
Then all $\Z{Q}$-submodules $A_1 \leq A$, all homomorphic images $A_2$ of $A$ and all direct sums
$A \oplus \cdots \oplus A$ of finitely many copies of $A$ are tame with respect to $\{Q_i \}$.
\end{crl}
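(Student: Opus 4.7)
The plan is to handle quotients, direct sums, and submodules in turn, exhibiting in each case a natural finite generating set and checking the tameness condition on it; by Proposition~\ref{prp:Independence-generating-set}, producing one witnessing generating set suffices. Fix throughout a finite generating set $\AA$ of $A$ witnessing tameness with respect to $\{Q_i\}$, and fix an index $i \in \{1,\ldots,k\}$. For a quotient $\pi \colon A \epi A_2$, take $\bar\AA = \pi(\AA)$; then $\bar\AA \cdot \Z{Q_i}$ is a $\Z{Q_i}$-quotient of $\AA \cdot \Z{Q_i}$, and the second inclusion in Lemma~\ref{lem:ses-modules} gives $\Sigma^0(Q_i;\AA \cdot \Z{Q_i}) \subseteq \Sigma^0(Q_i;\bar\AA \cdot \Z{Q_i})$, which transfers the tameness condition. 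For a finite direct sum $A^{\oplus n}$, the union of the $n$ canonical copies of $\AA$ generates $(\AA \cdot \Z{Q_i})^{\oplus n}$ as a $\Z{Q_i}$-submodule, and iterated use of the split-extension part of Lemma~\ref{lem:ses-modules} gives $\Sigma^0(Q_i;(\AA \cdot \Z{Q_i})^{\oplus n}) = \Sigma^0(Q_i;\AA \cdot \Z{Q_i})$.

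The submodule case is the real obstacle, because a $\Z{Q}$-generating set of $A_1$ need not relate to $\AA$ through any $\Z{Q_i}$-level exact sequence, so Lemma~\ref{lem:ses-modules} cannot be invoked directly. Since $\Z{Q}$ is noetherian, $A_1$ is finitely generated over $\Z{Q}$; choose any finite generating set $\BB$. Expanding each $b \in \BB$ as an integer combination of elements of the form $a \cdot f$ with $a \in \AA$ extracts a finite set $F \subset Q$ that carries all these expansions. Using the normality of $Q_i$ in $Q$ — so that $f q f^{-1} \in Q_i$ for $f \in F$ and $q \in Q_i$ — one verifies that
\[
\BB \cdot \Z{Q_i} \subseteq M := \sum\nolimits_{f \in F} (\AA \cdot \Z{Q_i}) \cdot f
\]
as $\Z{Q_i}$-submodules of $A$, and that $M$ is itself finitely generated over $\Z{Q_i}$.

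The decisive observation is that, for any $[\chi] \in S(Q_i, Q_{i+1})$, conjugation by every $f \in Q$ preserves the submonoid $(Q_i)_\chi$: indeed $[Q, Q_i] \subseteq Q_{i+1} \subseteq \ker \chi$ implies $\chi(f q f^{-1}) = \chi(q)$ for every $q \in Q_i$. Hence, if $\AA \cdot \Z{Q_i}$ is finitely generated over $\Z{(Q_i)_\chi}$, then each $(\AA \cdot \Z{Q_i}) \cdot f$ is also finitely generated over $\Z{(Q_i)_\chi}$ (via the identity $\Z{(Q_i)_\chi} \cdot f = f \cdot \Z{(Q_i)_\chi}$), and hence so is $M$. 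Since $Q_i$ is polycyclic as a subgroup of the nilpotent group $Q$, Proposition~\ref{prp:Invariant-submodule-polycyclic-group} applied to the $\Z{Q_i}$-submodule $\BB \cdot \Z{Q_i} \subseteq M$ yields $[\chi] \in \Sigma^0(Q_i; \BB \cdot \Z{Q_i})$. Running the identical argument with $-\chi$ in place of $\chi$ then transfers the full tameness condition from $\AA$ to $\BB$, completing the verification for $A_1$.
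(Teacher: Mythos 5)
Your proof is correct, and the quotient and direct-sum cases track the paper's argument exactly. The submodule case, however, takes a more hands-on route than the paper does, so a comparison is worthwhile. The paper's proof is very short: it observes that, by Proposition~\ref{prp:Independence-generating-set}, $A$ is also tame with respect to the enlarged generating set $\AA \cup \BB$, and then applies the submodule monotonicity of Proposition~\ref{prp:Invariant-submodule-polycyclic-group} to the inclusion $\BB \cdot \Z{Q_i} \subseteq (\AA \cup \BB) \cdot \Z{Q_i}$ to transfer tameness from $\AA \cup \BB$ to $\BB$. You instead bypass this invocation of Proposition~\ref{prp:Independence-generating-set} for the crucial step and re-derive its technical core: you manufacture the auxiliary module $M = \sum_{f \in F} (\AA \cdot \Z{Q_i}) f$, show $\BB \cdot \Z{Q_i} \subseteq M$, and argue directly that conjugation by any $f \in Q$ preserves $(Q_i)_\chi$ when $\chi$ vanishes on $Q_{i+1}$ (because $[Q, Q_i] \subseteq Q_{i+1} \subseteq \ker\chi$), so finite generation over $\Z{(Q_i)_\chi}$ propagates from $\AA \cdot \Z{Q_i}$ to each translate $(\AA \cdot \Z{Q_i}) f$, hence to $M$, hence — by Proposition~\ref{prp:Invariant-submodule-polycyclic-group} — to $\BB \cdot \Z{Q_i}$. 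That conjugation-invariance observation is precisely the engine inside the paper's proof of Proposition~\ref{prp:Independence-generating-set} (where it appears as the claim $Q_{i\chi} = q Q_{i\chi} q^{-1}$), so the two arguments ultimately rest on the same fact. What your unfolded version buys is a self-contained view of where that structural input actually bites; what the paper's version buys is brevity, by re-using a result that was proved for exactly this kind of generating-set change.
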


\begin{proof}
Let $\AA$ be a finite generating set of $A$.
Since the ring $\Z{Q}$ is noetherian, 
every submodule $A_1$ of $A$ is finitely generated over $\Z{Q}$,
say by the finite subset $\BB$.
By Proposition \ref{prp:Independence-generating-set}, 
the module $A$ is then tame with respect to the central series $\{Q_i \}$ and $\AA \cup \BB$.
As $\BB \cdot \Z{Q_i}$ is a $\Z{Q_i}$-submodule of $(\AA \cup \BB) \cdot \Z{Q_i}$,
Proposition \ref{prp:Invariant-submodule-polycyclic-group}
applies and yields the inclusion 
$\Sigma^0(Q_i; (\AA \cup \BB) \cdot \Z{Q_i}) \subseteq \Sigma^0(Q_i;  \BB \cdot \Z{Q_i})$.
So 
\begin{align*}
&\phantom{\supseteq \hspace*{2mm}}\Sigma^0(Q_i; \BB \cdot \Z{Q_i}) 
\cup -\Sigma^0(Q_i; \BB \cdot \Z{Q_i}) \\
&\supseteq 
\Sigma^0(Q_i; (\AA \cup \BB) \cdot \Z{Q_i}) \cup -\Sigma^0(Q_i; (\AA \cup \BB) \cdot \Z{Q_i})
\supseteq S(Q, Q_{i+1})
\end{align*}
for every $i$ whence $A_1$ is tame with respect to the central series $\{Q_i \}$.

Consider next a homomorphic image $A_2$ of $A$.
Then $A_2$ is generated by the image $\AA_2$ of $\AA$ in $A_2$ 
and so it is tame with respect to $\{Q_i \}$ and $\AA_2$; 
indeed, 
$\Sigma^0(Q_i; \AA_2 \cdot \Z{Q_i}) \supseteq \Sigma^0(Q_i; \AA \cdot \Z{Q_i}) $
by Lemma \ref{lem:ses-modules}.

Consider, thirdly,  a direct sum $A_1 \oplus \cdots \oplus A_m$ of copies $A_j$ of $A$ 
and let $\AA$ up to $\AA_m$ denote the corresponding copies of the finite generating set $\AA$ of $A$.
The definition of $\Sigma^0$ immediately implies 
that $\Sigma^0(Q_i; \AA \cdot \Z{Q_i})$ is contained in
$\Sigma^0(Q_i; (\AA \cup \cdots \cup  \AA_m) \cdot \Z{Q_i})$,
whence $A_1 \oplus \cdots \oplus A_m$  is tame with respect to the central series $\{Q_i \}$.
\end{proof}
%
%====================
\subsection{Passage to subgroups of finite index}
\label{ssec:Passage-sub-groups-finite-index}
%====================
%
In the construction to be carried out in section \ref{sec:Construction-groups},
it is convenient to assume that the factors of the central series $\{Q_i \}$ are torsion-free.
The next result shows that this assumption is harmless.
\begin{prp}
\label{prp:Passage-subgroup-finite-index}
Let $Q$ be a finitely generated nilpotent group, $\{Q_i \}$ a central series of $Q$
and $A$ a finitely generated $\Z{Q}$-module.
\begin{enumerate}[(i)]
\item There exists a subgroup of finite index $P$ of $Q$ such that the induced central series 
$\{P_i = P \cap Q_i \}$ has torsion-free factors.
\item
If $P \leq Q$ is a subgroup of finite index,
then $A$ is tame with respect to $\{Q_i \}$ if, and only if, 
$A$, viewed as a $\Z{P}$-module, is tame with respect to the induced central series $ \{P_i = P \cap Q_i \}$.
\end{enumerate}
\end{prp}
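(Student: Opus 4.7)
For part (i), my plan is to use that finitely generated nilpotent groups are residually finite. Let $T_i\le Q_i$ denote the preimage in $Q_i$ of the torsion subgroup of the finitely generated abelian group $Q_i/Q_{i+1}$, so that $Q_i/T_i$ is torsion-free. Because $Q_i/Q_{i+1}$ is central in $Q/Q_{i+1}$, the characteristic subgroup $T_i/Q_{i+1}$ is preserved by conjugation by $Q$, so $T_i$ is normal in $Q$ and the set $S=\bigcup_{i=1}^{k}(T_i\smallsetminus Q_{i+1})$ is finite. Residual finiteness of $Q$ supplies a normal subgroup $P$ of finite index avoiding $S$; then $P\cap T_i=P\cap Q_{i+1}=P_{i+1}$, so $xP_{i+1}\mapsto xT_i$ injects $P_i/P_{i+1}$ into the torsion-free group $Q_i/T_i$.

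For part (ii), I would first prove the equivalence under the extra hypothesis $P\triangleleft Q$, and then deduce the general case by applying this normal version to $P^{\ast}\triangleleft Q$ and to $P^{\ast}\triangleleft P$, where $P^{\ast}=\bigcap_{q\in Q}qPq^{-1}$ is the normal core of $P$; since $P^{\ast}\subseteq P$ one has $P^{\ast}\cap Q_i=P^{\ast}\cap P_i$, so the series induced on $P^{\ast}$ from $\{Q_i\}$ and from $\{P_i\}$ coincide. So assume $P\triangleleft Q$, fix a finite $\Z{Q}$-generating set $\AA$ of $A$ and a transversal $\TT$ of $P$ in $Q$ with $1_Q\in\TT$; then $\BB=\AA\cdot\TT$ is a $\Z{P}$-generating set of $A$, and for any transversal $\TT_i$ of $P_i$ in $Q_i$ one has $\AA\cdot\Z{Q_i}=(\AA\TT_i)\cdot\Z{P_i}$ as $\Z{P_i}$-submodules of $A$.

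The heart of the argument is to pull both tameness conditions back to the sphere $S(P_i)$ via the injection $\iota_i^{\ast}\colon S(Q_i)\hookrightarrow S(P_i)$. Lemma \ref{lem:Subgroup-finite-index-Sigma0} yields $\iota_i^{\ast}(\Sigma^0(Q_i;\AA\cdot\Z{Q_i}))=\Sigma^0(P_i;\AA\cdot\Z{Q_i})\cap S(P_i,P_i\cap Q_i')$, and a direct computation using $P_i\cap Q_{i+1}=P_{i+1}$, $P_i\cap Q_i'\subseteq P_{i+1}$, and divisibility of $\R$ gives $\iota_i^{\ast}(S(Q_i,Q_{i+1}))=S(P_i,P_{i+1})$. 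Applied to the polycyclic group $P_i$, Proposition \ref{prp:Invariant-submodule-polycyclic-group} combined with (iterated) Corollary \ref{crl:Invariants-ses-over-polycyclic-group} expresses both invariants as intersections over cyclic $\Z{P_i}$-submodules:
\[
\Sigma^0(P_i;\AA\cdot\Z{Q_i})=\bigcap_{a\in\AA,\,s\in\TT_i}\Sigma^0(P_i;as\cdot\Z{P_i}),\quad \Sigma^0(P_i;\BB\cdot\Z{P_i})=\bigcap_{a\in\AA,\,t\in\TT}\Sigma^0(P_i;at\cdot\Z{P_i}).
\]

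The main obstacle is to show that both intersections, after intersecting with $S(P_i,P_{i+1})$, reduce to $\bigcap_{a\in\AA}\Sigma^0(P_i;a\cdot\Z{P_i})\cap S(P_i,P_{i+1})$. The key observation---reminiscent of Lemma \ref{lem:Subgroup-finite-index-Sigma0-induced-module}---is that for every $t\in Q$ and every $[\chi]\in S(P_i,P_{i+1})$ the characters $\chi$ and $\chi\circ\alpha_t$ (with $\alpha_t(p)=tpt^{-1}$) agree on $P_i$: for $p\in P_i$ the commutator $[t,p]$ lies in $[Q,Q_i]\subseteq Q_{i+1}$ and, by the normality of $P$, also in $P$, whence in $P_{i+1}$, on which $\chi$ vanishes. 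Unwinding the polycyclic $\Sigma^0$-criterion (Corollary \ref{crl:Sigma0-criterion-modules-polycyclic-groups}) then yields $\Sigma^0(P_i;at\cdot\Z{P_i})\cap S(P_i,P_{i+1})=\Sigma^0(P_i;a\cdot\Z{P_i})\cap S(P_i,P_{i+1})$ for every $a\in\AA$ and every $t\in Q$. Both tameness conditions therefore collapse to the single inequality $\bigcap_{a}\Sigma^0(P_i;a\cdot\Z{P_i})\cup -\bigcap_{a}\Sigma^0(P_i;a\cdot\Z{P_i})\supseteq S(P_i,P_{i+1})$, establishing the equivalence claimed in (ii).
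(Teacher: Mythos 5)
Your argument for part (ii) is sound and is, if anything, more careful than the paper's. You explicitly reduce to the case $P \triangleleft Q$ via the normal core $P^{*}$, then compare the two tameness conditions at the level of cyclic $\Z{P_i}$-submodules, using the observation that for $[\chi] \in S(P_i,P_{i+1})$ conjugation by any $t\in Q$ leaves $\chi$ unchanged on $P_i$ because $[t,p]\in [Q,Q_i]\cap P\subseteq P_{i+1}$. The paper takes a terser route: starting from a $\Z{P}$-generating set $\AA$, it applies Lemma \ref{lem:Subgroup-finite-index-Sigma0} and then asserts $\Sigma^0(P_i;\AA\cdot\Z{P_i})=\Sigma^0(P_i;\AA\cdot\Z{Q_i})$ by presenting $\AA\cdot\Z{Q_i}$ as a quotient of a finite direct sum of copies of $\AA\cdot\Z{P_i}$; that presentation is only transparent when $P_i$ is normal in $Q_i$, so your normal-core reduction is filling in a detail the paper glosses over. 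Your chain of reductions --- apply the normal case to $P^{*}\triangleleft Q$ and to $P^{*}\triangleleft P$, then use $P^{*}\cap Q_i=P^{*}\cap P_i$ --- is legitimate.

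Part (i), however, has a genuine gap. You want a finite-index $P$ with $P\cap T_i\subseteq Q_{i+1}$ for each $i$, and you propose to get it by applying residual finiteness of $Q$ to the ``finite'' set $S=\bigcup_i(T_i\smallsetminus Q_{i+1})$. But $T_i\smallsetminus Q_{i+1}$ is \emph{not} finite in general: it is a finite union of nontrivial cosets of $Q_{i+1}$ in $T_i$, and each coset is infinite as soon as $Q_{i+1}$ is infinite, which is the typical case. Replacing $S$ by a finite set of coset representatives does not repair this, since avoiding a transversal of $(T_i\smallsetminus Q_{i+1})/Q_{i+1}$ does not force $P\cap T_i\subseteq Q_{i+1}$. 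What you actually need is that the image of $P$ in $Q/Q_{i+1}$ avoid the finitely many nontrivial elements of the finite group $T_i/Q_{i+1}$; this follows from residual finiteness of the quotients $Q/Q_{i+1}$ (each finitely generated nilpotent, hence residually finite), taking $P$ to be the intersection of the resulting finite-index preimages. The paper sidesteps the issue by a straightforward induction on the length $k$ of the central series, peeling off the bottom term $Q_k$ at each step.
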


\begin{proof}
(i) The proof is by induction on $k$, the number of factors of the central series $\{Q_i \}$.
By a result of K. Hirsch, every polycyclic group, hence every finitely generated nilpotent group, 
is residually finite (\cite{Hir54}, cf.\  \cite[1.3.10]{LeRo04}).
This property allows one to find a subgroup $Q' \leq Q$ of finite index 
which avoids the torsion-subgroup of $Q_k$.
Set $Q'_i = Q' \cap Q_i$.
By the induction hypothesis,
the group $\overline{Q'} = Q'/Q'_k$ has then a subgroup $\bar{P}$ of finite index 
such that the induced series  $\{\bar{P}_i = (\bar{P} \cap (Q'_i/Q'_k) \}$ has torsion-free factors.
Let $P$ be the preimage of  $\bar{P}$ in $Q'$.
Then $P$ is a subgroup of finite index in $Q$ 
and all the factors of the central series $\{P_i = P \cap Q_i \}$ 
are torsion-free. 

(ii) Since  $P$ has finite index in $Q$, 
the module $A$ is finitely generated over $P$; 
let $\AA$ be a finite $\Z{P}$-generating set.
Consider now an index $i \leq k$.
Since $P_i = P\, \cap \,Q_i$ has finite index in $Q_i$
and as $P_i  \cap [Q_i, Q_i] \subseteq P \cap Q_{i+1} = P_{i+1}$,
Lemma \ref{lem:Subgroup-finite-index-Sigma0}
allows one to deduce 
that the inclusion $\mu_i \colon P_i \incl Q_i$
 induces an isomorphism of spheres $\mu^*_i \colon S(Q_i, Q_{i+1}) \iso  S(P_i, P_{i+1})$ with
\begin{equation}
\label{eq:Relation-invariants}
\mu_i^* \left(\Sigma^0(Q_i; \AA \cdot \Z{Q_i}) \cap S(Q_i, Q_{i+1})\right)
=
\Sigma^0(P_i; \AA \cdot \Z{Q_i}) \cap S(P_i, P_{i+1}).
\end{equation}
Now $\AA \cdot \Z{P_i}$ is a submodule of $\AA \cdot \Z{Q_i}$
and $\AA \cdot \Z{Q_i}$ is a quotient module of a finite direct sum of $\AA \cdot \Z{Q_i}$; 
so $\Sigma^0(P_i; \AA \cdot \Z{P_i})$ coincides with $\Sigma^0(P_i; \AA \cdot \Z{Q_i})$
by Proposition \ref{prp:Invariant-submodule-polycyclic-group}  and Lemma \ref{lem:ses-modules}.
 Relation \eqref{eq:Relation-invariants} therefore implies
that $A$ is tame with respect to $\{Q_i\}$ if, and only if, 
it is tame with respect to the induced central series $\{P_i \}$.
\end{proof}
%
%====================
%
\section{Construction of finitely related groups}
\label{sec:Construction-groups}
%
%====================
%
In this section we establish our main result:
\begin{thm}
\label{thm:Construction-fp-group}
Let $Q$ be a nilpotent group and $A$ a finitely generated $\Z{Q}$-module.
If $A$ is tame the semidirect product $G = Q \ltimes A$ of $A$ by $Q$ has a finite presentation.
\end{thm}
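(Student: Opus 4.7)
The strategy is to construct an explicit finite presentation of $G$ and verify it, proceeding by induction on the length $k$ of the central series $\{Q_i\}$ witnessing tameness. As a preliminary reduction, Proposition \ref{prp:Passage-subgroup-finite-index} allows us to replace $Q$ by a subgroup $P$ of finite index whose induced central series has free abelian factors; since $A$ remains tame over $P$ and $[Q:P]$ is finite, a finite presentation of $P \ltimes A$ yields one for $Q \ltimes A$. In the base case $k = 1$, the group $Q$ is free abelian and the tameness condition reduces to $\Sigma^0(Q;A) \cup -\Sigma^0(Q;A) = S(Q)$, which is precisely the Bieri--Strebel metabelian condition of \cite{BiSt80} granting finite presentability.

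For the inductive step, choose generators $X_i$ of $Q_i$ modulo $Q_{i+1}$ so that $X = \bigcup_i X_i$ generates $Q$, together with a finite generating set $\AA$ realizing tameness. The proposed presentation takes generators $X \cup \AA$ and the following relations: a finite presentation of the polycyclic group $Q$ in the generators $X$; the commutator relations $[a, a'] = 1$ for every pair $a, a' \in \AA$; and, for each level $i$ and each character $\chi$ in a finite collection covering $S(Q_i, Q_{i+1})$, a family of module relations $a_1 = \sum_{a \in \AA} a \cdot \lambda_{a,a_1}$ supplied by the $\Sigma^0$-criterion (Proposition \ref{prp:Sigma0-criterion}) applied to the $\Z{Q_i}$-module $\AA \cdot \Z{Q_i}$, each rewritten as a word in $X^{\pm} \cup \AA^{\pm}$ via $a \cdot q = q^{-1} a q$. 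The finite collection of characters exists by tameness at level $i$, the openness of $\Sigma^0$ (Corollary \ref{crl:Openness-Sigma0(Q,A)}), and the compactness of $S(Q_i, Q_{i+1})$.

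Let $\tilde{G}$ denote the group defined by this presentation; there is an evident epimorphism $\tilde{G} \epi G$, and the task is to prove it injective. The crux is to show that the normal closure $\tilde{A}$ of $\AA$ in $\tilde{G}$ is abelian: granted this, $\tilde{A}$ becomes a $\Z{Q}$-module generated by $\AA$, the imposed module relations force the canonical surjection $\tilde{A} \epi A$ to be an isomorphism, and $\tilde{G} \cong Q \ltimes A$ follows.

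This commutativity step is the main obstacle, since the explicit relations $[a, a'] = 1$ cover only a tiny fragment of the infinite family of required commutators $[q^{-1} a q, {q'}^{-1} a' q'] = 1$ for $q, q' \in Q$. I expect the argument to proceed by induction on the depth in $\{Q_i\}$: the level-$i$ reduction relations from the $\Sigma^0$-criterion allow a conjugate $q^{-1} a q$ with $q \in Q_i$ to be rewritten modulo conjugates by elements of $Q_i \cap Q_{\chi}$ for $\chi$ in the chosen finite cover of $S(Q_i, Q_{i+1})$; combining the rewritings over this finite cover and then applying the inductive hypothesis to the shorter central series in $Q_{i+1}$ will yield commutativity of $\tilde{A}$. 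The delicate combinatorial point will be to organize the rewriting so that each step strictly decreases a suitable depth measure (managed by the $v_\chi$-values supplied by the $\Sigma^0$-criterion), thereby precluding infinite descent --- which is precisely the control that the simultaneous tameness condition at every level $i$ is crafted to provide.
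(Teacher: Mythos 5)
Your overall scaffolding matches the paper's proof: pass to a finite-index subgroup $P$ with torsion-free central factors, present $G$ on $\AA \cup \XX$ with a finite presentation of $Q$ plus module relations extracted from the $\Sigma^0$-criterion over each $S(Q_i,Q_{i+1})$, and then show the normal closure $\tilde A$ of $\AA$ in the presented group is abelian. The double induction you gesture at (outer on the level $i$, inner on a ``depth'' of the conjugating word) is also the right shape.

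There are, however, two concrete gaps. First, the set of explicit commutator relations you propose, namely $[a,a']=1$ for $(a,a')\in\AA^2$, is far too small. The rewriting supplied by the $\Sigma^0$-criterion does not drive the conjugating word $w$ all the way to $1$: at level $i$ it only shrinks the $\Z^{n_i}$-image of $w$ until it lands in some fixed ball of radius $\rho_i$ and then gets stuck. So the base of your inner induction cannot be $w=1$. The paper instead includes as relators $\KK_0 = \{[a,b^w] : (a,b)\in\AA^2,\ w\in W\}$, where $W = V_1\cdots V_k$ and each $V_i$ is a ball of exponent-vectors of norm-squared at most $\rho_i^2$. The radii $\rho_i$ are supplied by a genuinely geometric statement (Lemma~\ref{lem:Geometric-lemma}): compactness of $S(Q_i,Q_{i+1})$ gives a finite open cover by the sets $\OO_{\Delta_{j,i}}$ and their antipodes, and the lemma converts this into a uniform $p_0(i)$ such that any lattice point of norm-squared exceeding $p_0(i)$ can be shifted strictly inward by one of the supports $L_{j,i}$. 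Your phrase ``strictly decreases a suitable depth measure'' is exactly this, but without naming the lemma and without enlarging $\KK_0$ accordingly, the induction has no valid base case $\SS_{i,p_0(i)} = \SS_{i+1}$ and the argument does not close.

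Second, your presentation omits $\RR_A$, a finite set of defining relators for the $\Z Q$-module $A$ with respect to $\AA$ (finite by noetherianity of $\Z Q$). The relations $\CC$ coming from the $\Sigma^0$-criterion are \emph{consequences} of the module structure of $A$, not a presentation of it; granting them plus abelianness of $\tilde A$ only gives a surjection $\tilde A \epi A$, not an isomorphism. You need $\RR_A$ to kill the remaining kernel. With $\KK_0$ and $\RR_A$ added, and Lemma~\ref{lem:Geometric-lemma} cited to justify the finite choice of radii, your outline essentially becomes the paper's Steps 1--5.
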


The proof of this theorem will be carried out in several steps:
we start out with a reduction, 
then specify a finite generating set and an infinite list of defining relations
of a subgroup $G_1 = P \ltimes A$ with finite index in $G$.
Next the geometric assumption on $A$ will be used to define radii $\rho_i$.
In the fourth step, these radii go into the construction of a finitely related group $\tilde{G}$.
This group $\tilde{G}$ is a split extension of a group $\tilde{A}$ by the nilpotent group $P$
and admits of an epimorphism 
\[
\kappa \colon \tilde{G} = P \ltimes \tilde{A} \epi G_1 = P \ltimes A.
\]
In the last step, 
geometric arguments will be invoked to deduce 
that $\kappa $ is an isomorphism.

%
%====================
\subsection{First steps}
\label{ssec:First-steps}
%====================
%
Let $Q$ be a nilpotent group and $A$ a finitely generated (right) $\Z{Q}$-module,
that is tame with respect to a central series $\{Q_i \}$ of $Q$.
In this section, we carry out two easy steps of the proof of Theorem 
\ref{thm:Construction-fp-group}.
%
%-----------------------
\subsubsection{Step 1: reduction}
\label{sssec:Step1-reduction}
%-----------------------
%
By claim (i) of Proposition \ref{prp:Passage-subgroup-finite-index}
there exists a subgroup $P \leq Q$ of finite index
such the induced central series $\{P_i = P\cap Q_i \}$ has torsion-free, hence free abelian, factors.
Moreover, 
by claim (ii) of the proposition
the $\Z{Q}$-module $A$ remains finitely generated when viewed as a $\Z{P}$ module
and it is tame with respect to the central series $\{P_i\}$.
Set $G_1 = P \ltimes A$.

Since $G$ admits a finite presentation if, and only if, $G_1$ has this property,
there is no harm in assuming from the very outset 
that the factors $Q_i/Q_{i+1}$ of the given central series
\begin{equation}
\label{eq:definition-central-series}
Q_{k+1} = \{1\} < Q_k < \cdots < Q_2 < Q_1 = Q
\end{equation}
are free-abelian, say of rank $n_i$.
%
%-----------------------
\subsubsection{Step 2: choice of an infinite presentation}
\label{sssec:Step2-infinite-presentation}
%-----------------------
%
Construct a finite set of generators
$
\XX = \TT_k \cup \TT_{k-1} \cup \cdots  \cup\TT_2 \cup \TT_1, 
$
of the nilpotent group $Q$ by picking, for each index $i \in \{1,2, \ldots, k \}$, 
a finite subset
\[
\TT_i = \{t_{1,i}, t_{2,i}, \ldots , t_{n_i, i} \}
\]
of $Q$ 
whose image under the canonical map $\can \colon Q  \epi Q/Q_{i+1}$ 
is a basis of the free abelian group $Q_i/Q_{i+1}$.
Enlarge $\XX$ to a finite generating set $G = Q \ltimes A$ by adding to $\XX$ 
a finite set $\AA$ of $\Z{Q}$-generators of $A$. 
The union $\AA \cup \XX$ is then finite and generates $G$.

Now to an infinite presentation of $G$.
Let $F$, $F(\AA)$ and $F(\AA \cup \XX)$ 
denote the free groups on the finite sets $\XX$, $\AA$, and $\AA \cup \XX$, respectively.
The group $Q$ is a finitely generated nilpotent group and so it is finitely related by a result of P. Hall's
(cf. \cite[2.2.4]{Rob96}).
There exists therefore a finite set $\RR_\Q$ of reduced words in $\XX$
so that the epimorphism $\pi$ induced by the inclusion $\XX \incl Q$ 
gives rise to an isomorphism $\pi_* \colon F / \gp_F(\RR_Q) \iso Q$.

Next, define an infinite set of commutators of $G$, namely the set
\begin{equation}
\label{eq:Definition-KK}
\KK = \{ [a,b^w] \mid (a,b) \in \AA^2 \text{ and } w \in F \}.
\end{equation}
Let $N$ be the \emph{normal} closure of the finite set $\AA$ in the free group $F(\AA \cup \XX)$
and let $\bar{N}$ denote the canonical image of $N$ in the factor group 
\begin{equation}
\label{eq:Definition-auxiliary-quotient}
\langle \AA \cup \XX \mid \KK\rangle.
\end{equation}
We claim that every element $\bar{a} \in \bar{\AA}$ commutes 
with every conjugate $\bar{b}^w$ of every $\bar{b} \in \bar{\AA}$;
here $w$ is a freely reduced word in $F(\AA \cup \XX)$.
By definition, this claim holds if $w \in F = F(\XX)$.
Suppose now that $w \in F(\AA \cup \XX) \smallsetminus F$
 and that $c \in \AA^{\pm}$ is the first letter in $w$ that lies outside $\XX^{\pm}$; 
 say $w = u \cdot c \cdot v$  with $u \in F$.
 Then the following computation is valid (congruences \emph{modulo} $\KK$):
 \begin{align*}
 [a,b^w] = [a,b^{u c v}] = [a^{v^{-1}}, b^{uc}]^v&=  [a^{v^{-1}}, b^u \cdot [b^u, c]]^{v}
 \equiv [a^{v^{-1}}, b^u]^{v} = [a,b^{uv}].
 \end{align*}
 The claim thus follows by descending induction on the letters of $w$ outside $\XX^{\pm}$.
 
 Consider next the quotient $ H= \langle \AA \cup \XX \mid \KK \cup \RR_Q \rangle$
of the free group $F(\AA \cup \XX)$; 
let $\hat{A}$ denote the canonical image of $N = \gp_{F(\AA \cup \XX)}(\AA)$ in $H$.
Then $\hat{A}$ is abelian and conjugation by $Q$ turns it into a (right) $\Z{Q}$-module.
As this module is finitely generated,
it satisfies the ascending chain condition on submodules  by a generalization of Hilbert's basis theorem 
\footnote{\cite[p.\,429, Theorem 1]{Hal54}, cf. \cite[p.\;464, 15.3.3]{Rob96}.}.
So the kernel of the canonical projection $\hat{A} \epi A$ 
is generated by a finite subset, say $\widehat{\RR_A}$, over the noetherian ring $\Z{Q}$. 
Lift $\widehat{\RR}_A$ to a finite subset  $\RR_A$ of $N = \gp_{F(\AA \cup \XX)} (\AA)$.
Then the inclusion of $\AA \cup \XX$  into $G = Q \ltimes A$ induces an isomorphism
\begin{equation}
\label{eq:Infinite-presentation-G}
\iota_* \colon \langle \AA \cup \XX \mid \RR_A \cup \KK \cup \RR_Q) \iso G.
\end{equation}
This is the infinite presentation announced in the heading of section \ref{sssec:Step2-infinite-presentation}.

We are left with the task of showing 
that the infinite set of commutators $\KK$ in presentation \eqref{eq:Infinite-presentation-G} 
can be deduced from finitely many among them and the finite set  $\RR_A \cup \RR_Q$.
We shall reach this goal on a roundabout route:
we shall specify a finite subset $\KK_0$ of $\KK$,
but supplement it by a finite set $\CC$, 
which paraphrases the \emph{hypothesis  that $A$ be a tame $\Z{Q}$-module}.
The set $\KK$ will then be deduced from the finite set of relators 
$\RR_A \cup \KK_0 \cup \CC \cup \RR_Q$.
%
%==========
\subsection{Step 3: finding the radii $\rho_i$}
\label{ssec:Step3-radii}
%==========
%
Now we bring into play the geometric assumption 
that $A$ is a tame $\Z{Q}$-module with respect to the central series $\{Q_i \}$;
 by step 1) the factors of this central series can be assumed to be free abelian.
The module $A$ is tame with respect to some finite $\Z{Q}$-generating set, say $\BB \subset A$;
by remark \ref{remark:Independence-generating-set}
it is therefore tame with respect to the previously chosen set $\AA \subset A$.
For each index $i \in \{1, 2, \ldots, k\}$,
the inclusion
\[
S(Q_i, Q_{i+1}) \subseteq \Sigma^0(Q_i; \AA \cdot \Z{Q_i}) \cup - \Sigma^0(Q_i; \AA \cdot \Z{Q_i})
\]
thus holds. 
Our next aim is to define radii $\rho_i$, one for each index $i \in \{1, 2, \ldots, k\}$.
These radii will be found independently of each other.
In the remainder of step 3, the index $i \in \{1, 2, \ldots, k\}$ will therefore be fixed.
%
%-----------------------
\subsubsection{Construction of finite open covers}
\label{sssec:Construction-finite-open-covers}
%-----------------------
%
As detailed in section \ref{sssec:Sigma0-criterion-basic-result},
the tameness of $A$ allows one to construct open covers $\{\OO_{\Lambda(i)}\}$  
of the invariants $\Sigma^0(Q_i; \AA \cdot \Z{Q_i})$.
In view of corollary \ref{crl:Sigma0-criterion-modules-polycyclic-groups},
the matrices involved in this cover can and will be assumed to be diagonal.
The collection $\{\OO_{\Lambda(i)}\}$ gives then rise to an open cover 
$\{\OO_\Delta(i)\} \cup \{-\OO_\Delta(i)\}$
of 
\[
\Sigma^0(Q_i; \AA \cdot \Z{Q_i}) \cup - \Sigma^0(Q_i; \AA \cdot \Z{Q_i})
\]
and hence of the subsphere $S(Q_i, Q_{i+1})$.
As this subsphere is compact, 
there exist finitely many diagonal matrices
\[
\Delta_{1,i} = \{\delta_{a, a_1} \cdot \lambda(a;1,i) \}, \;
\Delta_{2,i} = \{\delta_{a, a_1}  \cdot \lambda(a;2,i) \}, \ldots,
\Delta_{\ell_i,i} = \{\delta_{a, a_1} \cdot \lambda(a;\ell_i,i) \}
\]
whose associated open sets $\OO_{\Delta_{1,i}}$, $\OO_{\Delta_{2,i}}$,\ldots, $\OO_{\Delta_{\ell_i,i}}$
cover, \emph{together with their antipodal images $-\OO_{\Delta_{j, i}}$},
the entire sphere $S(Q_i, Q_{i+1})$.

The diagonal matrices $\Delta_{j,i}$ have entries in $\Z{Q_i}$ and enjoy two properties:
\begin{align}
&a = a \cdot \lambda(a; j, i) \text{ for every } a \in \AA, \text{ and }
\label{eq:Centraliser-property}\\
&v_\chi(\lambda(a; j, i)) > 0 \text{ for every } a \in \AA \text{ and every } [\chi] \in \OO_{\Delta_{j, i}}.
\end{align}
%
%-----------------------
\subsubsection{Passage to euclidean spaces}
\label{sssec:Passage-euclidean-spaces}
%-----------------------
%
As before, 
let $n_i$ be the rank of the free-abelian group $Q_i/Q_{i+1}$.
Next, let $\vartheta_i$ be the epimorphism
\begin{equation}
\label{eq:Epimorphisms-theta-i}
\vartheta_i \colon Q_i \epi Q_i/Q_{i+1} \iso \Z^{n_i}
\end{equation}
which sends the generator $t_{j,i} \in \TT_i$ 
onto the $j$-th standard basis vector of  the euclidean lattice $\Z^{n_i}$.
This epimorphism induces an isomorphism
\begin{equation}
\label{eq:Isomorphism-theta-i-induced}
\vartheta_i^* \colon \s^{n_i - 1} \iso S(Q_i, Q_{i+1})
\end{equation}
of the unit sphere in the $n_i$-dimensional euclidean space $\R^{n_i}$ onto the sphere $S(Q_i, Q_{i+1})$.
It takes a unit vector $u$ to the ray $[\chi_u]$ represented by the character $\chi_u$, 
the character that sends the element $q \in Q_i$ to the scalar product $\langle u,\vartheta_i(q) \rangle$.

We next introduce finite subsets $L_{j,i}$ of the standard lattice $\Z^{n_i} \subset \R^{n_i}$;
they are defined by
\begin{equation}
\label{eq:Definition-L-sub-ji} 
L_{j,i} = \bigcup\nolimits_{a \in \AA}\vartheta_i(\supp \lambda(a;j,i)).
\end{equation}
The definitions of the homeomorphism $\theta_i^*$ and of the sets $\OO_{\Delta_{j,i}}$,
together with the covering property of the family $\{\OO_{\Delta_{j,i}} \} \cup \{-\OO_{\Delta_{j,i}} \}$,
imply then the following geometric property of the family $\FF_{+, i} =\{L_{j,i} \mid 1 \leq j \leq \ell_i \}$
of finite subsets of the lattice $\Z^{n_i}$:
\emph{for each unit vector $u \in \s^{n_i	- 1}$ there exists an index $j = j(u)$ 
such that}
\begin{equation}
\label{eq:Property-subsets-L-sub-ji}
\langle u, L_{j, i}\rangle > 0 \quad \text{or} \quad \langle u, L_{j, i} \rangle< 0.
\end{equation} 
In the above, a finite subset of real numbers is considered to be positive, respectively negative,
if all its elements are positive, respectively negative.

The stated property of $\FF_{+,i}$ can be expressed more simply in terms of the auxiliary collection 
\[
\FF_i = \{ L \subset \Z^n_i \mid L \in \FF_{+,i} \text{ or } -L \in \FF_{+,i} \}.
\]
The new finite family fulfills the hypotheses of the following

\begin{lem}[{\cite{Str84}}, p.\;291, Lemma 25]
\label{lem:Geometric-lemma}
Let $\FF$ be a finite collection of finite subsets $L$ of the standard lattice $\Z^n \subset \R^n$.
Assume that for every $u$ in the sphere $\s^{n-1}$ 
there exists $L \in \FF$ such that $\langle u, L \rangle > 0$.
Then there exists a natural number $p_0$ 
such that for every lattice point $x$ with $\langle x, x \rangle = p+1 > p_0$ 
there exists $L \in \FF$ such that $x + L$ is contained in the ball 
$\B_p = \{y \in \Z^n \mid \norm{y}^2 \leq p \}$.
\end{lem}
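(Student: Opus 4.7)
The plan is to combine a compactness argument on $\s^{n-1}$ with a direct estimate of $\|x+\ell\|^2$. The hypothesis provides an open cover of $\s^{n-1}$ by the sets $U_L = \{v \in \s^{n-1} \mid \langle v, \ell \rangle > 0 \text{ for all } \ell \in L\}$, one for each $L \in \FF$; the task is to upgrade this pointwise positivity to a uniform positive lower bound, so that translating $x$ by some $L \in \FF$ strictly decreases the squared norm.

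For each $L \in \FF$ I would introduce the continuous function $\varphi_L \colon \s^{n-1} \to \R$ defined by $\varphi_L(v) = \min_{\ell \in L} \langle v, \ell \rangle$, and set $\Phi(v) = \max_{L \in \FF} \varphi_L(v)$, which is continuous because $\FF$ is finite. The hypothesis gives $\Phi(v) > 0$ on $\s^{n-1}$, and compactness of the sphere yields $c := \min_{v \in \s^{n-1}} \Phi(v) > 0$. Thus for every $v \in \s^{n-1}$ some $L \in \FF$ satisfies $\langle v, \ell \rangle \geq c$ for all $\ell \in L$.

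Now, given $x \in \Z^n$ with $\langle x, x \rangle = p+1$ sufficiently large, I would apply the previous step to the unit vector $v = -x/\|x\|$ to obtain $L \in \FF$ with $\langle x, \ell \rangle \leq -c\|x\|$ for every $\ell \in L$. Writing $M = \max\{\|\ell\| : \ell \in L,\ L \in \FF\}$, which is finite because $\FF$ is a finite collection of finite sets, one computes
\[
\|x + \ell\|^2 \;=\; \|x\|^2 + 2\langle x, \ell\rangle + \|\ell\|^2 \;\leq\; \|x\|^2 - 2c\|x\| + M^2.
\]
This is at most $\|x\|^2 - 1 = p$ as soon as $2c\|x\| \geq M^2 + 1$, so choosing any integer $p_0$ exceeding $\bigl((M^2 + 1)/(2c)\bigr)^2$ ensures that $\|x\|^2 = p+1 > p_0$ forces $x + L \subseteq \B_p$.

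The main obstacle, in my view, is the passage from the pointwise hypothesis to the uniform bound $c$; everything downstream is a routine inequality. The max-of-mins construction of $\Phi$ is what makes the compactness argument transparent, since it avoids any explicit selection of $L$ as a function of $v$ and rests only on the fact that the maximum of finitely many continuous functions is continuous on the compact sphere.
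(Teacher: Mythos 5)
Your argument is correct. The max-of-mins function $\Phi$ is continuous, the hypothesis forces it to be strictly positive on $\s^{n-1}$, compactness gives a uniform lower bound $c>0$, and the squared-norm estimate $\|x+\ell\|^2 \le \|x\|^2 - 2c\|x\| + M^2$ then does the rest once $\|x\|$ is large enough; the choice $p_0 > \bigl((M^2+1)/(2c)\bigr)^2$ works as you say. Note, however, that the paper itself offers no proof of this lemma: it is stated as a citation to Lemma 25 on p.\;291 of \cite{Str84}, so there is no in-paper argument to compare against. Your reconstruction is the natural one and is almost certainly the argument given in the cited source, since every ingredient (compactness of the sphere, finiteness of $\FF$ and of each $L$, the elementary expansion of $\|x+\ell\|^2$) is forced by the statement. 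One small point worth making explicit: the $L$ selected for $v=-x/\|x\|$ necessarily consists of nonzero vectors, since $\varphi_L(v)\ge c>0$ rules out $\ell=0$; this is what guarantees the translate $x+L$ really moves inward.
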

We apply this lemma to the collection $\FF_i$, obtain an integer $p_0(i)$ 
and then set $\rho_i = \sqrt{p_0(i)}$. 
By varying $i$ from 1 to $k$ we then obtain the radii mentioned in the heading of step 3.
%
%====================
\subsection{Step 4: introduction of the group  $\tilde{G}$}
\label{ssec:Introduction-group-tildeG}
%====================
%
Recall that $F$ is the free group with basis $\XX = \TT_1 \cup \cdots \cup \TT_k$; 
see section \ref{sssec:Step2-infinite-presentation}.
In the sequel, we shall need certain free factors $F_i$ of $F$;
here $i$ ranges from $1$ to $k$.
By definition, the group $F_i$ has the subset
\[
\XX_i = \TT_i \cup \TT_{i+1} \cup\cdots \cup \TT_k
\]
as its basis. 
Next we need the subset $F^o$ of \emph{ordered} words of $F$.
It consists of all words $w$ of the form 
\begin{equation}
\label{eq:Definition-ordered-words}
w = t_{1,1}^{m_{1,1}} t_{2,1}^{m_{2,1}} \cdots t_{n_1,1}^{m_{n_1,1}}
\cdot
 t_{1,2}^{m_{1,2}} \cdots t_{n_2,2}^{m_{n_2,2}}
\cdots
t_{1,k}^{m_{1,k}} t_{2,k}^{m_{2,k}}\cdots t_{n_k,k}^{m_{n_k,k}}
\end{equation}
Finally, set $F^o_i =F_i \cap F^o$.
Note that the restriction $\pi{|F^0}$ of the projection $\pi \colon F \epi Q$ is bijective.
%
%-----------------------
\subsubsection{Auxiliary relations of $\tilde{G}$}
\label{sssec:Auxiliary-relations}
%-----------------------
%
As explained at the end of section \ref{sssec:Step2-infinite-presentation},
the infinite set of commutators $\KK$ in the presentation \eqref{eq:Infinite-presentation-G} of $G$ 
will be replaced by the union $\KK_0 \cup \CC$ of two finite subsets.
We are now ready to define them.

The set $\KK$ is made up of all commutators $[a, b^w]$ with $(a,b) \in \AA^2$ and $w \in F$. 
The subset $\KK_0$ is obtained from $\KK$ by restricting the conjugating words $w$ to a finite subset $W$. 
This subset is the complex product  
\begin{align}
W &= V_1 \cdot V_2 \cdots V_k \quad \text{where}
\label{eq:Definition-W}\\
V_i &= \{t_{1,i}^{m_{1,i}} t_{2,i}^{m_{2,i}} \cdots t_{n_i,i}^{m_{n_i,i}} \mid 
m^2_{1,i} + m^2_{2,i} + \cdots + m^2_{n_i,i} \leq \rho_i^2 \}.
\label{eq:Definition-V-sub-i}
\end{align}

Now to the set of relations $\CC$. 
These relations mimic the module relations \eqref{eq:Centraliser-property},
namely
$a = a \cdot \lambda(a; j, i)$.
We  rewrite them as relations of the form
\begin{equation}
\label{eq:Centraliser-property-bis}
a = \prod\nolimits_{u \in F^o_i} \left(a^{\lambda(a;j,i,\hat{u})}\right)^u.
\end{equation}
Here $\lambda(a;j,i;\hat{u})$ denotes the coefficient of the element $\lambda(a;j,i)$ at 
$q = \hat{u} = \pi(u)$.
The formally infinite product on the right hand side of relation  \eqref{eq:Centraliser-property-bis}  
is to be interpreted as a finite product, 
consisting of the factors with non-zero exponent $\lambda(a;j,i;\hat{u})$,
the factors being taken in an arbitrarily chosen order.
Note that $\CC$ comprises $\card(\AA) \cdot (\ell_1 + \ell_2 + \cdots + \ell_k)$ relations.
(The integers $\ell_i$ are defined in section \ref{sssec:Construction-finite-open-covers}.)
%
%-----------------------
\subsubsection{Definition of $\tilde{G}$}
\label{sssec:Definition-tildeG}
%-----------------------
%
At long last, we are able to define the group $\tilde{G}$.
This group is by construction finitely related;
it will later be shown to be isomorphic to $G$,
whence $G$ admits a finite presentation,
as claimed by Theorem \ref{thm:Construction-fp-group}.

The generating set of $\tilde{G}$ is the finite set 
\[
\AA \cup \XX \text{ with } \XX = \TT_1 \cup \cdots \cup \TT_i \cup \cdots \cup \TT_k \text{ and } 
\TT_i = \{t_{1,i}, t_{2,i}, \ldots, t_{n_i,i} \}.
\]
The set of defining relations is the union $\RR_A \cup \KK_0 \cup \CC \cup \RR_Q$.
The set of relators $\RR_A$ corresponds to a finite set of relations of the $\Z{Q}$-module $A$ 
with respect to the set of generators $\AA$;
the finite set of relators $\RR_Q$ defines the nilpotent group $Q$ as a quotient of the free group $F$ on $\XX$
(see section \ref{sssec:Step2-infinite-presentation}).
The finite sets $\KK_0$ and $\CC$ are as explained in the previous section \ref{sssec:Auxiliary-relations}.
The inclusions of $\AA$ into $A$ and of $\XX$ into $Q$ induce, 
by the choice of the set of relations $\RR_A \cup \KK_0 \cup \CC \cup \RR_Q$,  
an epimorphism $\kappa \colon \tilde{G} \epi G$.
Actually more is true:
\begin{prp}
\label{prp:Kappa-is-iso}
The epimorphism 
\[
\kappa \colon \tilde{G} = \langle \AA \cup \XX \mid  \RR_A \cup \KK_0 \cup \CC \cup \RR_Q\rangle
\epi 
G = Q \ltimes A
\] is an isomorphism.
It establishes that $G$ is finitely related.
\end{prp}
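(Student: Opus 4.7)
Surjectivity of $\kappa$ is built into the definition, so the task is injectivity. Since the finite presentation of $\tilde{G}$ differs from the infinite presentation \eqref{eq:Infinite-presentation-G} of $G$ only by replacing $\KK$ with $\KK_0 \cup \CC$, injectivity is equivalent to the assertion that every commutator $[\tilde a, \tilde b^w]$, with $(a,b) \in \AA^2$ and $w \in F$, is trivial in $\tilde{G}$. A first reduction uses $\RR_Q$: in $\tilde{G}$ these relators force the image of $F$ to be a quotient of $Q$, so $w$ may be replaced by the unique ordered representative $v \in F^o$ with $\pi(v) = \pi(w)$. The goal becomes $[\tilde a, \tilde b^v] = 1$ in $\tilde{G}$ for every $(a,b) \in \AA^2$ and every $v \in F^o$.

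The plan is a two-level induction on $v = v_1 \cdots v_k \in F^o$ written as in \eqref{eq:Definition-ordered-words}: the primary variable is the largest index $i$ with $\|x_i(v)\|^2 > \rho_i^2$, run from $i = k$ downwards, and the secondary variable, at fixed primary level $i$, is the norm $\|x_i(v)\|^2$, where $x_i(v) = (m_{1,i}, \ldots, m_{n_i,i}) \in \Z^{n_i}$. The base case is $v \in W$, in which event $[a,b^v]$ is a member of $\KK_0$ and is therefore trivial.

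For the inductive step, fix such an $i$. Applied to the family $\FF_i$, the geometric Lemma~\ref{lem:Geometric-lemma} supplies some $L \in \FF_i$ with $x_i(v) + L \subset \B_{\|x_i(v)\|^2 - 1}$; by construction of $\FF_i$, this $L$ is the $\vartheta_i$-image of the support of one of the matrices $\Delta_{j,i}$ used to cover $\Sigma^0(Q_i; \AA \cdot \Z{Q_i})$ (up to antipodal sign), and so corresponds to a centraliser relator in $\CC$, namely $b = \prod_u (b^{\lambda(b;j,i,\hat u)})^u$ with $u$ ranging over $F^o_i$. Conjugating this relator by $v$ distributes over the product and expresses $\tilde b^v$ as a product of factors $(\tilde b^{uv})^m$. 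Each conjugator $uv$ is then rewritten in ordered form: since $u \in Q_i$ and $[Q,Q_i]\subseteq Q_{i+1}$, the element $u$ commutes past $v_1,\ldots,v_{i-1}$ modulo factors in $Q_{i+1}$, so the ordered form $v' \in F^o$ of $uv$ has $x_j(v') = x_j(v)$ for $j < i$, $x_i(v') = x_i(v) + \vartheta_i(\hat u) \in \B_{\|x_i(v)\|^2 - 1}$, and $x_j(v')$ for $j > i$ possibly perturbed by commutator corrections lying entirely in $Q_{i+1}$. Thus each $v'$ is strictly smaller than $v$ in the induction: either the primary level has dropped below $i$, or the secondary norm has dropped while the primary level remains $i$. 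The induction hypothesis yields $[\tilde a, \tilde b^{v'}] = 1$ for every $v'$; applying $[\tilde a, \,\cdot\,]$ to the whole product and using that a commutator bracket is additive in entries already known to commute with $\tilde a$ closes the step.

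The principal obstacle I expect is the bookkeeping at the end of that step: one must verify that the ordered reduction of $uv$ genuinely lowers the induction measure, and in particular that the commutator corrections introduced into the higher-level entries $x_j(v')$ with $j > i$ do not derail the descent. The top-down organisation of the primary induction is the natural remedy, since any correction living in $Q_{i+1}$ concerns conjugators that earlier stages of the induction have already treated. With this bookkeeping in place, $[\tilde a, \tilde b^v] = 1$ for every $v \in F^o$, $\KK$ is a consequence of $\RR_A \cup \KK_0 \cup \CC \cup \RR_Q$, and $\kappa$ is an isomorphism.
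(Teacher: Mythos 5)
The overall mechanism you deploy is the paper's: reduce to ordered conjugators, apply Lemma~\ref{lem:Geometric-lemma} to shrink the $i$-th block, use the centraliser relations in $\CC$ plus commutator identities to distribute and re-order, then invoke an inductive hypothesis. The gap lies in the organization of the induction: the measure you propose -- ``largest index $i$ with $\|x_i(v)\|^2 > \rho_i^2$'' primary, $\|x_i(v)\|^2$ secondary -- does not decrease under the reduction, and the defect it leaves is exactly the one your ``principal obstacle'' paragraph does \emph{not} address. When $v$ is at primary level $i$, the lower-level entries $x_j(v)$, $j<i$, are unconstrained. After the reduction the new conjugator $v'$ has $x_j(v') = x_j(v)$ for $j<i$ unchanged, $\|x_i(v')\|^2$ reduced, and $x_j(v')$ for $j>i$ perturbed. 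If $\|x_i(v')\|^2$ drops to or below $\rho_i^2$ and the perturbed higher entries stay within their balls, then the primary level of $v'$ is the largest bad index \emph{below} $i$, determined by the untouched entries $x_j(v)$ with $j<i$. Under your stated top-down schedule, that level has not yet been treated, and it is generally not the base case $v\in W$. So the base step of your secondary induction at level $i$ silently appeals to later stages of the primary induction; the ``top-down organisation'' only rescues the case where the primary level \emph{increases} (because some $x_j(v')$ with $j>i$ escapes its ball), not the case where it falls.

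The paper's statement $\SS_i$ is deliberately not a property of an individual conjugator read off from its coordinates. Instead it is the claim for the fixed class of words $v_1\cdots v_{i-1}w_i$ with $v_j\in V_j$ for every $j<i$ and $w_i\in F_i$ arbitrary. Bounding the low-index blocks uniformly (rather than leaving them free and tracking a ``largest bad index'') makes the inner induction self-contained: its base case $\SS_{i,\,p_0(i)}$, where $\|x_i\|^2\le\rho_i^2$, is literally the previously established statement $\SS_{i+1}$, and the reduction keeps $v_1,\ldots,v_{i-1}$ inside their balls because they are untouched. To repair your argument you would have to replace your dynamic primary measure by the paper's static classes $\SS_i$, at which point the two proofs coincide.
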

%
%====================
\subsection{Step 5: proof of Proposition \protect{\ref{prp:Kappa-is-iso}}}
\label{ssec:Step5-proof-proposition-4.3}
%====================
The proof will be by descending induction on $k$.
With this aim in mind,
we consider statements $\SS_i$, the index $i$ descending from $k+1$ to 1:
\begin{equation*}
\label{eq:Statement-SS-sub-i}
\SS_i \colon \left\{% 
\begin{minipage}[c]{9.6cm}
\emph{the relation  $[a,b^w] = 1$ holds in $\tilde{G}$ for every couple $(a,b) \in \AA^2$\\ 
and every word $w = v_1 v_2 \cdots v_{i-1} w_i$ with\\ $v_1 \in V_1$, \ldots, $v_{i-1} \in V_{i-1}$
and $w_i \in F_i = F(\TT_i \cup \cdots \cup \TT_k)$.}
\end{minipage}
 \right \}
\end{equation*}
Statement $\SS_{k+1}$ holds because the set of conjugating exponents in $\KK_0$ 
is, by definition, $W = V_1 V_2 \cdots V_k$; this fact allows one to start the induction.
%
%-----------------------
\subsubsection{Inductive step}
\label{sssec:Inductive-step}
%-----------------------
%
Assume statement $\SS_{i+1}$ is valid for some index $i \leq k$.
We want to deduce, by an auxiliary induction,  that statement $\SS_{i}$ holds.
To this end,
we consider, for every natural number $p$,  the statement
\[
\SS_{i,p} \colon \left\{% 
\begin{minipage}[c]{9.6cm}
\emph{the relation  $[a,b^w] = 1$ holds in $\tilde{G}$ for every couple $(a,b) \in \AA^2$\\ 
and every word $w = v_1 v_2 \cdots v_{i-1} u_i  w_{i+1}$ such that \\ 
$v_1 \in V_1$, \ldots, $v_{i-1} \in V_{i-1}$,  $w_{i+1} \in F_{i+1}$ and 
\\$u_i = t_{1,i}^{m_{1,i}}  \cdots t_{n_i,i}^{m_{n_i,i}}$ 
with 
$m_{1,i}^2 + \cdots  + m_{n_i,i}^2 \leq p$.
}
\end{minipage}
 \right \}
\]
By the definition 
\footnote{see equations \eqref{eq:Definition-W} and \eqref{eq:Definition-V-sub-i}}
of the set $V_i$,
statement $\SS_{i,p_0(i)}$ coincides with statement $\SS_{i+1}$,
a statement that holds by the outer induction.
This permits us to start the inner induction at $p = p_0(i)$.
Consider now a word
\[
u_i = t_{1,i}^{m_{1,i}}  \cdots t_{n_i,i}^{m_{n_i,i}} 
\text{ with } m_{1,i}^2 + \cdots + m_{n_i,i}^2 = p + 1 > p_0(i).
\]
Set  $x = \vartheta_i (\pi(u_i)) \in \Z^{n_i}$.
By Lemma \ref{lem:Geometric-lemma}, 
applied with
\[
\FF = \FF_i = \{ L_{j,i} \mid 1 \leq j \leq \ell_i \} \cup \{ -L_{j,i} \mid 1 \leq j \leq \ell_i \},
\]
and by the construction of $p_0(i)$, there exists an index $j$ and a sign $\varepsilon$
such that  $x + \varepsilon \cdot L_{j,i}$ is contained in the ball $\B_p \subset \Z^{n_i}$
with radius-squared $p$.
The finite set $L_{j,i}$ corresponds to $n_i$ relations in $\CC$ having the form
\begin{equation}
\label{eq:Relation-corresponding-L-sub-ji}
b = \prod\nolimits_{u \in F^0_i} \left(b^{\lambda(b;j,i,\hat{u})}\right)^u.
\end{equation}
In the following calculations, the couple  $(a,b) \in \AA$ is fixed.
Assume first that $\varepsilon$ equals $+ 1$.
Then the chain of relations
\[
[a,b^w] = 
\left[ a, \left(\prod\nolimits_{u \in F^o_i} b^{\lambda(b;j,i,\hat{u})\cdot u} \right)^w\right]
=
\prod\nolimits_{u \in F^o_i} \left[ a,  b^{\lambda(b;j,i,\hat{u})\cdot u \cdot w}\right]^{f(b,u)}
\] 
holds in the group $\tilde{G}$.
Here the second relation is a consequence of the commutator identity
\[
[a, bc] = a^{-1}c^{-1} b^{-1} \cdot abc = [a,b]^{c^a} \cdot [a,c], 
\]
and the conjugating factors $f(b,u)$ are certain elements in $\tilde{G}$ which need not concern us.
We next rewrite the conjugating words $u \cdot w$ \emph{modulo} the set of relators $\RR_Q$,
the defining relators of $Q$.
The following calculation holds in the free group $F$:
\[
u \cdot w = u \cdot v_1 v_2 \cdots v_{i-1} u_i w_{i+1} 
= 
v_1 v_2 \cdots v_{i-1} \cdot (u^{v_1 v_2 \cdots v_{i-1}} \cdot u_i )\cdot w_{i+1}. 
\]
The words $u$ and $u_i$ project onto elements $q$ and $q_i$ in $Q_i$ 
and this subgroup is central in $Q$ \emph{modulo} $Q_{i+1}$.
It follows that the word $u \cdot w$ is congruent, modulo the defining relators $\RR_Q$  of $Q$,
to a word of the form $v_1 v_2 \cdots v_{i-1} \cdot u'_i \cdot  w'_{i+1}$ 
with 
\[
u'_i = t_{1,i}^{m'_{1,i}}  \cdots t_{n_i,i}^{m'_{n_i,i}} \quad\text{and}\quad w'_{i+1} \in F_{i+1}.
\]
In addition,  
the lattice point $(m'_{1,i}, \ldots, m'_{n_i,i}) = (m_{1,i}, \ldots, m_{n_i,i}) + y$ with $y \in L_{j,i}$ 
lies in the ball $\B_p$.
The inductive hypothesis applies therefore to the word $v_1 v_2 \cdots v_{i-1} \cdot u'_i \cdot w'_{i+1}$.
It follows that each commutator
\[
\left[ a,  b^{\lambda(b;j,i,\hat{u})\cdot u \cdot w}\right]
\]
is trivial in $\tilde{G}$,
whence the commutator relation $[a,b^w] = 1$ holds in $\tilde{G}$.
\smallskip

Suppose now that $\varepsilon = - 1$.
Then the chain of relations
\[
[a,b^w] = 
\left[ \prod\nolimits_{u \in F^o_i} a^{\lambda(a;j,i,\hat{u})\cdot u}, b^w\right]
=
\prod\nolimits_{u \in F^o_i} \left[ a^{\lambda(a;j,i,\hat{u})\cdot u }, b^w \right]^{g(a,u)}.
\] 
holds in the group $\tilde{G}$.
Here the second relation is a consequence of the commutator identity
\[
[ab,c] = b^{-1}a^{-1} c^{-1} \cdot abc = [a,c]^b \cdot [b,c], 
\]
and the conjugating elements $g(a,u)$ are certain elements in $\tilde{G}$.
By rewriting the factors of the third term,  one arrives at the relation
\[
[a,b^w] = 
\prod\nolimits_{u \in F^o_i} 
\left[ a^{\lambda(a;j,i,\hat{u}) }, b^{w \cdot u^{-1}} \right]^{u\cdot g(a,u)}.
\]
As before, the aim is now to show that each factor on the right hand side is trivial in $\tilde{G}$.
To attain it one rewrites the conjugating words $w \cdot u^{-1}$ 
\emph{modulo} the set of relators $\RR_Q$ 
and uses then  the fact that the set $x - L_{j,i}$ 
lies inside the ball  $\B_p \subseteq \Z^{n_i}$ of radius $\sqrt{p}$.
Here are the details:
\[
w \cdot u^{-1} = 
v_1 v_2 \cdots v_{i-1} u_i w_{i+1} \cdot u^{-1}
= 
v_1 v_2 \cdots v_{i-1} \cdot  (u_i \cdot  w_{i+1} \cdot u^{-1}). 
\]
The factor in parentheses represents an element $q_i$ of $Q_i$;
this element can also be represented by a word of the form $u''_i \cdot  w'' _{i+1}$
with
\[
u''_i = t_{1,i}^{m''_{1,i}}  \cdots t_{n_i,i}^{m''_{n_i,i}} \quad\text{and}\quad w''_{i+1} \in F_{i+1}.
\]
Moreover,
$(m''_{1,i}, \ldots, m''_{n_i,i}) = (m_{1,i}, \ldots, m_{n_i,i}) - y$ for some $y \in L_{j,i}$.
It then follows as before, that $[a,b^w] = 1$ holds in $\tilde{G}$.
%
%-----------------------
\subsubsection{Conclusion}
\label{sssec:Conclusion}
%-----------------------
%
The previous calculation shows 
that implication $\SS_{i,p} \Rightarrow \SS_{i,p + 1}$  is valid for every integer $p \geq p_0(i)$.
Hence the interior induction allows one to deduce from statement $\SS_{i+1}$ the  statement
\[
\SS_{i,\infty} \colon \left\{% 
\begin{minipage}[c]{9.6cm}
\emph{the relation  $[a,b^w] = 1$ holds in $\tilde{G}$ for every couple $(a,b) \in \AA^2$\\ 
and every  $w = v_1 v_2 \cdots v_{i-1} u_i  w_{i+1}$ with $v_1 \in V_1$, \ldots, \\ 
$v_{i-1} \in V_{i-1}$ , $u_i = t_{1,i}^{m_{1,i}}  \cdots t_{n_i,i}^{m_{n_i,i}}$ and  $w_{i+1} \in F_{i+1}$. 
}
\end{minipage}
 \right \}
\]
As the presentation of $\tilde{G}$ contains the relations $\RR_Q$ defining $Q$,
this statement $\SS_{i,\infty}$ is equivalent to $\SS_i$.
The exterior induction then allows us to conclude that statement $\SS_1$ is valid.
In view of the discussion following equation \eqref{eq:Definition-auxiliary-quotient},
statement $\SS_1$, finally, proves 
that the epimorphism $\kappa$,  figuring in Proposition \ref{prp:Kappa-is-iso},  is an isomorphism.
The proof of Proposition \ref{prp:Kappa-is-iso} 
and hence that of Theorem  \ref{thm:Construction-fp-group} is now complete.
%

%
%====================
%
\section{Examples and concluding remarks}
\label{sec:Examples-comments}
%
%====================
The purpose of this final section is twofold;
to illustrate the computation of $\Sigma^0$ by some select examples
and to discuss the necessity of the requirement of tameness.
%
%====================
\subsection{Examples of tame modules}
\label{ssec:Examples-tame-modules}
%====================
%
We describe some basic examples of tame modules over abelian groups and over nilpotent groups of class 2.
%
%-----------------------
\subsubsection{$Q$ free abelian}
\label{sssec:Q-free-abelian}
%-----------------------
Our first example goes back to the papers \cite{Bau73} and \cite{Rem73a} 
written by G. Baumslag and V. I. Remeslennikov, respectively.
Let $k \geq 1$ be an integer and $Q$ the free abelian group of rank $2k$
with basis $\BB = \{x_1, y_1, \ldots, x_k, y_k \}$. 
Define $A$ to be the cyclic $\Z{Q}$-module with defining (right) annihilator ideal
\begin{equation}
\label{eq:BR-ideal}
I = (1 + x_1-y_1) \cdot \Z{Q} + \cdots + (1 + x_k-y_k) \cdot \Z{Q}.
\end{equation}
Then $A$ is a tame $\Z{Q}$-module.

To see this,
choose $a = 1 + I \in A$ as generator of $A$ 
and consider a character $\chi \colon Q \to \R$ of $Q$.
Suppose first there exists an index $i \in \{1, \ldots, k\}$
so that $\chi$ assumes its minimum only once on the support $\{1, x_i, y_i \}$ of $1 + x_i - y_i$.
If the minimum occurs at 1 we rewrite the equation $a \cdot (1 + x_i - y_i) = 0$ in the form 
$a = a (-x_i + y_i)$  
and deduce from implication (iii) $\Rightarrow$ (i) in Proposition \ref{prp:Sigma0-criterion}
that $[\chi] \in \Sigma^0(Q;A)$ if $\chi(x_i) > 0$ and $\chi(y_i) > 0$.
If the minimum is taken on $x_i$ we use that $a$ is annihilated by 
$(1 + x_i - y_1) \cdot x_i^{-1} = 1 + x_i^{-1} - x_i^{-1}y_i$ 
and infer that $[\chi] \in \Sigma^0(Q;A)$ if $\chi(x_i^{-1}) > 0$ and $\chi(x_i^{-1}y_i) > 0$;
if it is taken on $y_i$ we argue similarly.
We conclude that $\chi$ can only lie outside of $\Sigma^0(Q;A)$ 
if it assumes its minimum at least twice on the support of each of the elements $1 + x_i - y_i$.
Two cases now arise:
if $\chi$ assumes its minimum only twice on the support of some element $1 + x_i - y_i$,
then $-\chi$ assumes its minimum only once on it, and so $[-\chi] \in \Sigma^0(Q;A)$;
if $\chi$ is constant on the support of each $1 + x_i - y_i$ 
then $\chi$ maps every generator of $Q$ to 0,
hence is the zero map and so does not represent a point of $S(Q)$.

The preceding calculations show 
that $\Sigma^0(Q;A) \cup -\Sigma^0(Q;A) = S(Q)$;
by Definition \ref{definition:Tame-module}
$A$ is thus tame with respect to the central series $\{1\} = Q_2 < Q_1 = Q$.
\begin{remarks}
\label{remarks:Verifying-tameness}
The above example brings to light two hallmarks of Theorem \ref{thm:Construction-fp-group}.

a) Typically, one will not be able to determine $\Sigma^0(Q;A)$ precisely.
Fortunately,  
Theorem \ref{thm:Construction-fp-group} does not presuppose the exact knowledge of $\Sigma^0(Q;A)$:
it suffices to find a lower bound $\Lambda \subseteq \Sigma^0(Q;A)$ with $\Lambda \cup -\Lambda = S(Q)$.

b) The module $A$ in the previous example is constructed by generators and defining annihilating elements.
By suitably choosing the annihilating elements one can arrange that the constructed module is tame;
it may, however, be far smaller then anticipated.
One way of avoiding a bad surprise is to select the annihilating element so 
that they can be interpreted as describing a localization of a known module.
This is the case with the annihilating ideal \eqref{eq:BR-ideal};
multiplication by each $1+x_i$ defines an injective endomorphism of the free cyclic module $\Z{P}$,
where $P$  is the free abelian group on $\{x_1, \ldots, x_k\}$.
The cyclic module  $A$ is therefore isomorphic to the localized polynomial ring 
\[
\Z[X_1, X_1^{-1}, (1 + X_1)^{-1}, \ldots, X_k, X_k^{-1}, (1 + X_k)^{-1}].
\]
This isomorphism shows, in particular,
that every wreath product $\Z \wr P = P \ltimes \Z{P}$ embeds into a finitely related metabelian group.
\end{remarks}
%
%-----------------------
\subsubsection{$Q$ a Heisenberg group}
\label{sssec:Q-Heisenberg-group}
%-----------------------
Concrete examples of torsion-free, nilpotent groups of class 2 are provided by Heisenberg groups.
Such a group is defined as follows:
\begin{definition}
\label{definition:Heisenberg-group}
A group $G$ is called a \emph{Heisenberg group of rank} $k$
if it admits a presentation with generators $\{x_1, y_1,  \cdots x_k, y_k, z\}$ 
and with defining relations
\begin{equation}
\begin{split} 
\label {eq:Heisenberg-group}
[x_i, x_j]  &=[y_i,y_j]=1 \text{ for all }1\le i < j\le k,\\
[x_i,y_j]  &=z^{\delta_{ij}}\text{ for all }1\le i, j\le k .
 \end{split}
 \end{equation}
\end{definition}
The element $z$ is of infinite order and generates the centre $Z$ of $Q$,
 and $G/Z$ is free abelian of rank $2k$.
 \smallskip
 
We next construct a tame module. 
Let $Q$ be a Heisenberg group of rank $k$
and define $A = \Z{Q}/I$ to be the cyclic $Q$-module with
\begin{equation}
\label{eq:Annihilating-ideal-Heisenberg}
I =  (1 + x_1-y_1) \cdot \Z{Q} + \cdots + (1 + x_k-y_k) \cdot \Z{Q} + (z-\ell) \cdot \Z{Q}
\end{equation}
where $\ell > 1$ is an integer. Set $a = 1 + I$ and $\AA = \{a\}$.
We assert that $A$ is a tame $\Z{Q}$-module 
with respect to the central series $\{1\} = Q_3 < Q_2 = Z< Q_1 = Q$.

To prove this claim we have to establish the inclusions
\begin{gather}
\Sigma^0(Q;A) \cup -\Sigma^0(Q;A) \supseteq S(Q,Z)  = S(Q),
\label{eq:Condition-1}\\
\Sigma^0(Z;\AA\cdot \Z{Z}) \cup \Sigma^0(Z;\AA \cdot \Z{Z}) \supseteq S(Z).
\label{eq:Condition-2}
\end{gather}
To verify the first inclusion, one can proceed as in the previous example;
to justify the second, 
one rewrites the relation $a \cdot (z-\ell) = 0$ in the form $a \cdot (1-\ell\cdot z^{-1}) = 0$
and infers then from Proposition \ref{prp:Sigma0-criterion} 
that the character $\chi \colon Z = \gp(z) \to \R$ with $\chi(z^{-1}) = 1$ 
represents a point of $\Sigma^0(Z;\AA \cdot \Z{Z})$.

\begin{remark}
\label{remark:Structure.tame-module}
The relations imposed on the module $A$ in the preceding example can be interpreted as arising from a sequence of ascending HNN-extensions (cf.\;\cite[Sect.\;3]{GrSt11}). 
One can see in this way 
that the semi-direct product $Q \ltimes A$ contains the wreath product $\Z \wr \gp(\{x_1, \ldots, x_k\})$ .
In Theorem 1 of \cite{RoSt82} a different route is chosen: 
there $k = 1$, 
and the element $y_1 + x_1 -x_1^2$ is chosen in place of the element $1 + x_1 - y_1$.
One then sees by calculation 
that $A$ is isomorphic to $(\Z[1/\ell] P )^2$
as module over the subgroup $P = \gp(z, y_1)$ of $Q$.
\end{remark}

%
%====================
\subsection{On the necessity of the requirement of tameness}
\label{ssec:Necessity-tameness-module}
%====================
%
In Section \ref{sec:Construction-groups},
a geometric method is used to establish the finite presentability of $G = Q \ltimes A$. 
This method goes back to the paper \cite{BiSt80} by R. Bieri and the author.
In the cited paper, the following result is proved:
\begin{thm}[\protect{\cite[Thm.\;A]{BiSt80}}]
\label{thm:BiSt80}
Let $G$ be a finitely generated group containing an abelian normal subgroup $A$ 
with abelian factor group $Q = G/A$. View $A$ as a right $\Z{Q}$-module via conjugation.
Then $G$ is finitely presentable if, and only if, 
\begin{equation}
\label{eq:Tameness-abelian}
\Sigma^0(Q;A) \cup -\Sigma^0(Q;A) = S(Q).
\end{equation}
\end{thm}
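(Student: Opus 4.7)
The statement is an equivalence and I would prove the two directions by quite different techniques.

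\emph{Sufficiency.} If $\Sigma^0(Q;A)\cup -\Sigma^0(Q;A)=S(Q)$ holds, I would simply specialise Theorem \ref{thm:Construction-fp-group} to the case where $Q$ is nilpotent of class at most one (i.e.\ abelian) with central series $\{1\}=Q_{2}<Q_{1}=Q$. In this situation Definition \ref{definition:Tame-module} collapses to the single requirement
\[
\Sigma^0(Q_{1};\AA\cdot\Z{Q_{1}})\cup -\Sigma^0(Q_{1};\AA\cdot\Z{Q_{1}})\;\supseteq\;S(Q_{1},Q_{2})=S(Q),
\]
and because $\AA$ generates $A$ as a $\Z{Q}$-module we have $\AA\cdot\Z{Q}=A$, so the condition reduces to \eqref{eq:Tameness-abelian}. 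Theorem \ref{thm:Construction-fp-group} then directly delivers a finite presentation of $G=Q\ltimes A$.

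\emph{Necessity.} Conversely, assume $G$ is finitely presented and fix a character $[\chi]\in S(Q)$; the goal is to show that $[\chi]$ or $[-\chi]$ lies in $\Sigma^{0}(Q;A)$. My strategy is to feed a finite presentation of $G$ into the valuation $v_{\chi}$ of Definition \ref{definition:Naive-valuation-on-group-ring} and extract a matrix realising condition (iii) of Proposition \ref{prp:Sigma0-criterion} for either $\chi$ or $-\chi$. Choose a finite presentation $\langle\AA\cup\XX\mid\RR\rangle$ of $G$ in which $\AA$ projects into $A$ and $\XX$ projects onto generators of $Q$. Using the abelianity and normality of $A$, every relator $r\in\RR$ can be rewritten as a module identity $a=\sum_{a'\in\AA}a'\cdot\mu_{a',a}$ in $A$ with $\mu_{a',a}\in\Z{Q}$; the supports of these coefficients, measured by $v_{\chi}$, give the only candidate for the sought matrix.

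The main obstacle, and the crux of the argument, is to show that a single candidate must already work on one side. I would first reduce to characters with cyclic image $\chi(Q)\subseteq\Z$, using the density of rational points in $S(Q)$ together with the openness of $\Sigma^{0}(Q;A)$ supplied by Corollary \ref{crl:Openness-Sigma0(Q,A)}. In the discrete case, setting $N=\ker\chi$ yields $Q=N\rtimes\langle t\rangle$ with $\chi(t)=1$, and correspondingly $G$ acquires the structure of an ascending/descending HNN decomposition along the subgroup $N\ltimes A$. The technical heart of the proof is then the following assertion, to be established by a Bass–Serre/$\R$-tree style analysis of the Cayley 2-complex of a finite presentation of $G$: \emph{if $G$ is finitely presented then one of the two sub-rings $\Z{Q_{\chi}}$ or $\Z{Q_{-\chi}}$ generates $A$ finitely as a module}. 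Pulling this conclusion back through the approximation by rational characters, using once more the openness of $\Sigma^{0}$ and the invariance of the condition under passage to nearby characters, gives $[\chi]\in\Sigma^{0}(Q;A)\cup -\Sigma^{0}(Q;A)$ and completes the argument.
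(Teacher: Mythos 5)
This theorem is not proved in the paper at all: it is cited as \cite[Thm.\;A]{BiSt80}, and the text surrounding it only sketches, in a single paragraph, how the necessity direction is obtained in the source (one writes $N=\ker\chi$ --- for $\chi$ pulled back to $G$ --- as a free product with amalgamation $S_1\star_{S_0}S_2$, notes that this decomposition must be degenerate because a soluble group has no non-abelian free subgroup, and extracts tameness from an analysis of the degeneration; see \cite[Sects.\;4.2--4.7]{BiSt80}). So there is no ``paper proof'' to reproduce; nevertheless your proposal has two concrete problems.

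For the sufficiency direction, specialising Theorem \ref{thm:Construction-fp-group} to the central series $\{1\}=Q_2<Q_1=Q$ is correct, but it only handles the case $G=Q\ltimes A$. The statement you are proving concerns an \emph{arbitrary} extension $A\triangleleft G\epi Q$, and the author explicitly remarks, just after the theorem, that ``the group $G$ need not be the semi-direct product $Q\ltimes A$'' and that a non-split variant of Theorem \ref{thm:Construction-fp-group} is required (it lives in the unpublished \cite{Str81b}, and the author's footnote says the non-split construction was deliberately omitted from the present paper). Your reduction does not address non-split extensions, so this direction is incomplete.

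For the necessity direction, the decomposition you invoke is not the one used in \cite{BiSt80}. You propose an HNN/ascending--descending decomposition of $G$ along the preimage of $\ker(\chi|_Q)$, followed by a ``Bass--Serre/$\R$-tree style analysis of the Cayley $2$-complex''; the cited proof instead expresses the \emph{kernel} $N=\ker(\chi\circ\pi)$ itself as an amalgamated free product built from a finite presentation of $G$, and then analyses the degeneracy of that decomposition. Your description leaves the central step --- the assertion that ``one of $\Z{Q_\chi}$, $\Z{Q_{-\chi}}$ generates $A$ finitely'' --- as an unargued claim, and the route from a finite presentation to a matrix satisfying condition (iii) of Proposition \ref{prp:Sigma0-criterion} is not spelled out: it is not clear why the module identities you extract from the relators would have the required one-sided $v_\chi$-positivity after rewriting. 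The reduction to discrete characters via openness (Corollary \ref{crl:Openness-Sigma0(Q,A)}) is a sound preliminary step and does occur in \cite{BiSt80}, but it does not by itself close the gap. In short: the outline gestures at the right circle of ideas, but the hard part of the argument --- which occupies several pages in \cite{BiSt80} --- is missing.
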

The requirement \eqref{eq:Tameness-abelian} is nothing but the condition 
that the module $A$ be \emph{tame} (with respect to the central series $\{1\} = Q_2 < Q_1 = Q$).
The theorem reveals that this condition is not merely sufficient, but also necessary.
In addition, the group $G$ need not be the semi-direct product $Q \ltimes A$.
One can modify the proof of Theorem \eqref{thm:Construction-fp-group}
so that non-split extensions are also covered (see \cite[§4 and §5]{Str81b}),
but the question to what extent the condition of being tame is necessary seems to be of greater significance
then this generalization.
Prior to discussing it,
I explain how the necessity of $A$ being tame is established in the case of metabelian groups.

One starts out with a finitely presented group $G$ and a non-zero character $\chi \colon G \to \R$.
One then expresses $N = \ker \chi$ as a free product with amalgamation, say $S_1 \star_{S_0} S_2$
(see \cite[Sects.\;4.2--4.4]{BiSt80}).
Such a product typically contains non-abelian free subgroups.
If $G$ does not contain  a non-abelian free subgroup, for instance because it is soluble,
the free product with amalgamation must be degenerate. 
By analyzing this degeneration, one finds that the module $A$ has to be tame over the group $Q$
(see \cite[Sects.\;4.5--4.7]{BiSt80}).
%
%-----------------------
\subsubsection{The invariant $\Sigma$}
\label{sssec:Invariant-Sigma}
%-----------------------
%
The sketched argument can be generalized.
The representation of $\ker \chi$ as a free product with amalgamation 
is available for an arbitrary finitely presented group.
In the next step one needs an invariant 
that is capable of recording that the representation is degenerate 
if the group $G$ does not contain a non-abelian free subgroup.
Such an invariant has been introduced and studied in \cite{BNS}.
Its definition is this:
\begin{equation}
\label{eq:Definition-Sigma-BNS}
\Sigma(G) = \Sigma_{G'}(G) =  \{ [\chi] \in S(G) \mid G' \text{ fg over a fg submonoid of } G_\chi \}.
\end{equation}
Here $G$ is a finitely generated group and its derived group  $G'$ is viewed as a $G$-group via conjugation;
the definitions of the sphere $S(G)$ and of the submonoid $G_\chi$ are as in section 
\ref{sssec:Sigma0-definition}. Finally,  ``fg'' is short for ``finitely generated''.

With the help of the invariant $\Sigma$ the impact of finite presentability can then be expressed as follows:
\begin{thm}[\protect{\cite[Thm.\;C]{BNS}}]
\label{thm:Necessary-condition-BNS}
If $G$ is a finitely presented group which contains no non-abelian free subgroup
then
\begin{equation}
\label{eq:Necessary-condition-BNS}
\Sigma(G) \cup -\Sigma(G) = S(G).
\end{equation}
\end{thm}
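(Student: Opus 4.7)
The plan is to argue the contrapositive: given $[\chi] \in S(G)$ with neither $[\chi]$ nor $[-\chi]$ in $\Sigma(G)$, I would produce a non-abelian free subgroup of $G$. A preliminary reduction uses the openness of $\Sigma(G)$ in $S(G)$ (the analogue of Corollary \ref{crl:Openness-Sigma0(Q,A)}, proved for $\Sigma$ in \cite{BNS} by essentially the valuation argument of Section \ref{ssec:Sigma0-criterion}). Since $S(G) \setminus (\Sigma(G) \cup -\Sigma(G))$ is then closed and invariant under the antipodal map, and rational rays are dense, I may replace $\chi$ by an integer-valued character with $\chi(G) = \Z$; pick $t \in G$ with $\chi(t) = 1$ and set $N = \ker \chi$.

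The technical heart is to extract from any finite presentation $G = \langle X \mid R\rangle$ an HNN decomposition with stable letter $t$ whose base $B \subseteq N$ and associated subgroups $B^{\pm} \subseteq B$ are all finitely generated. After a Tietze adjustment one may take $X = \{t\} \cup Y_0$ with $Y_0 \subset N$ finite; slicing each of the finitely many $r \in R$ along its $t^{\pm 1}$-syllables produces a finite $Y \supseteq Y_0$ and finite subsets $Y^{\pm} \subseteq B := \langle Y \rangle$ with $B^{\pm} := \langle Y^{\pm} \rangle$, such that $G = \langle B, t \mid t B^- t^{-1} = B^+\rangle$. Equivalently, $N$ is the fundamental group of a bi-infinite line graph of groups with vertex groups $t^i B t^{-i}$ and edge groups $t^i B^+ t^{-i-1}$. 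The translation back into Sigma then proceeds as follows: $[\chi] \in \Sigma(G)$ is equivalent, through the Britton normal form together with the definition of $\Sigma$, to $B^+ = B$; symmetrically $[-\chi] \in \Sigma(G)$ is equivalent to $B^- = B$. Our two failures therefore force both $B^{\pm} \subsetneq B$, so the HNN extension is nondegenerate.

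Finally, a standard Britton normal form / ping-pong argument in a nondegenerate HNN extension yields a non-abelian free subgroup of $G$: choosing $g \in B \setminus B^+$ and $g' \in B \setminus B^-$, suitable products of $t$-conjugates of $g$ and $g'$ admit no non-trivial relation, contradicting the hypothesis on $G$. The main obstacle is the middle paragraph: producing an HNN decomposition with \emph{finitely generated} associated subgroups from an \emph{arbitrary} finite presentation is a delicate van Kampen diagram argument analysing the level sets of $\chi$ in the Cayley $2$-complex, and it is precisely where finite presentability — as opposed to mere finite generation — is used.
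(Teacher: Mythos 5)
The paper does not contain a proof of this statement: Theorem 5.4 is quoted verbatim from \cite[Thm.\;C]{BNS}, and the only expository material nearby is the paragraph preceding it (Section 5.2), which sketches the metabelian special case of \cite{BiSt80} via a free-product-with-amalgamation decomposition of $\ker\chi$. Your proposal is therefore a blind reconstruction of the BNS argument, and its overall shape --- a Bieri--Strebel HNN decomposition of $G$ with finitely generated base $B$ and associated subgroups $B^\pm$, a translation of membership in $\Sigma$ into ascendance of that decomposition, and Britton's lemma or ping-pong to locate a free subgroup --- is indeed the route taken in \cite{BNS}. You also correctly identify where finite presentability, as opposed to mere finite generation, enters: in securing finitely generated $B$ and $B^\pm$.

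There is, however, a genuine gap at the very first step. From the openness of $\Sigma(G)$ and the density of rational rays in $S(G)$ you conclude that you "may replace $\chi$ by an integer-valued character." This does not follow. The set $C = S(G) \smallsetminus \bigl(\Sigma(G) \cup -\Sigma(G)\bigr)$ is closed with dense complement, but a nonempty closed set with dense complement need not contain a rational ray: a single pair of antipodal irrational rays is already closed, antipodally symmetric and rational-free. So from the assumed existence of some $[\chi]$ outside $\Sigma(G)\cup-\Sigma(G)$ you cannot infer the existence of a discrete such $[\chi]$, and the HNN machinery you invoke needs $\chi(G)\cong\Z$. The BNS proof does not dodge the irrational case by a density remark; it is a geometric argument carried out directly on the universal cover $\tilde X$ of a finite presentation $2$-complex, filtered by the (possibly irrational) height function extending $\chi$. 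Simple connectivity of $\tilde X$ --- this is where finite presentability is used --- shows that if \emph{both} sublevel sets $\tilde X_{\chi\geq 0}$ and $\tilde X_{\chi\leq 0}$ are essentially disconnected (the translation of $[\pm\chi]\notin\Sigma(G)$), one obtains a non-degenerate splitting and hence a non-abelian free subgroup; your HNN picture is exactly the specialisation of this to discrete $\chi$. A further, minor, imprecision: for a \emph{fixed} HNN decomposition the biconditional "$[\chi]\in\Sigma(G)\Leftrightarrow B^+=B$" overstates what is true; what you actually need and what is correct is the implication that an ascending decomposition ($B^+=B$ or $B^-=B$) places one of $[\pm\chi]$ inside $\Sigma(G)$, so both failures force $B^\pm\subsetneq B$.
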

%-----------------------
\subsubsection{A necessary condition for the finite presentation}
\label{sssec:Necessary-condition-for-AP-groups}
%-----------------------
I  begin with a result 
that compares the invariants $\Sigma(G)$ and $\Sigma^0(Q;A)$ in a special case:
\begin{prp}
\label{prp:Relating-invariants}
Let $G$ be an extension of an abelian normal subgroup $A$ by a polycyclic group $Q$;
let $\pi \colon G \epi Q$ denote the associated projection.
Then  the biimplication
\begin{equation}
[\chi] \in \Sigma^0(Q;A) \Longleftrightarrow [\chi \circ \pi] \in \Sigma(G)
\end{equation}
holds for every non-zero character  $\chi \colon Q \to \R$-
\end{prp}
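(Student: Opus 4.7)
The proof has two implications; I will sketch each using Proposition \ref{prp:Existence-finitely-generated-subring}, which for polycyclic $Q$ equates ``fg over $\Z Q_\chi$'' with ``fg over $\Z M$ for some fg submonoid $M\subseteq Q_\chi$.'' Throughout, set $\psi=\chi\circ\pi$; since $\chi$ vanishes on $Q'$, any lift $\tilde b\in G$ of $b\in Q'$ satisfies $\psi(\tilde b)=0$ and so lies in $G_\psi$.

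For the forward direction, suppose $[\chi]\in\Sigma^0(Q;A)$. By Proposition \ref{prp:Existence-finitely-generated-subring} pick a fg submonoid $M\subseteq Q_\chi$ with $A$ fg over $\Z M$; enlarging $M$, we may assume it contains a finite generating set $b_1,\dots,b_t$ of the polycyclic group $Q'$. Lift each generator of $M$ into $G_\psi$ to obtain a fg submonoid $\tilde M\subseteq G_\psi$ with $\pi(\tilde M)=M$, and lift each $b_k$ to $\tilde b_k\in G'$. Since $\Z Q$ is noetherian, $G'\cap A$ is a fg $\Z Q$-submodule of $A$; by Propositions \ref{prp:Invariant-submodule-polycyclic-group} and \ref{prp:Existence-finitely-generated-subring} (enlarging $M$ once more if needed) it is fg over $\Z M$, say with finite generating set $\AA$. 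For any $g\in G'$, express $\pi(g)$ as a word $w$ in the $b_k$'s; then $g\cdot w(\tilde b_1,\dots,\tilde b_t)^{-1}$ lies in $G'\cap A$ and is therefore a product of $\tilde M$-conjugates of $\AA$. This gives $G'=\langle F^{\tilde M}\rangle$ for $F=\AA\cup\{\tilde b_1,\dots,\tilde b_t\}\subseteq G'$, so $[\psi]\in\Sigma(G)$.

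For the backward direction, suppose $G'=\langle F^{\tilde M}\rangle$ with $F\subseteq G'$ finite and $\tilde M\subseteq G_\psi$ a fg submonoid; set $M=\pi(\tilde M)\subseteq Q_\chi$. After enlarging $\tilde M$ by lifts $\tilde b_k^{\pm 1}\in G_\psi$ and enlarging $F$ by the $\tilde b_k$ together with their conjugates by the finite generating set of $\tilde M$, we may assume $M\supseteq Q'$ and that all these supplementary elements are available. Because $[G,A]\subseteq G'\cap A$, the quotient $A/(G'\cap A)$ has trivial $Q$-action and is a fg abelian group, so Lemma \ref{lem:ses-modules} reduces the task to proving $G'\cap A$ fg over $\Z Q_\chi$. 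The key claim is that $G'\cap A$ is already fg as a $\Z M$-module; Proposition \ref{prp:Existence-finitely-generated-subring} then does the rest.

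To prove the claim, invoke the fact that the polycyclic group $Q'$ is finitely presented, say $Q'=\langle b_1,\dots,b_t\mid r_1,\dots,r_N\rangle$. For each $f\in F$ choose a word $w_f$ in the $b_l$'s with $\pi(f)=w_f(b_l)$ and set $\alpha_f=f\cdot w_f(\tilde b_l)^{-1}\in G'\cap A$; let $H\subseteq G'\cap A$ be the $\Z M$-submodule generated by the finite set $\{\alpha_f:f\in F\}\cup\{r_i(\tilde b_l):1\leq i\leq N\}$. Since $M\supseteq Q'$, the subgroup $H$ is $Q'$-invariant and hence normal in $G'$. In $G'/H$ each $\alpha_f$ and each $r_i(\tilde b_l)$ becomes trivial; an induction on the monoid word length of $\tilde m\in\tilde M$, exploiting the $\tilde M$-invariance of $H$ at each step, shows that every conjugate $f^{\tilde m}$ is congruent modulo $H$ to a word in the $\tilde b_l H$. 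Hence $G'/H$ is generated by $\tilde b_1 H,\dots,\tilde b_t H$ and satisfies the relations $r_i$, yielding a surjection $Q'\twoheadrightarrow G'/H$; composed with the natural map $G'/H\twoheadrightarrow G'/(G'\cap A)=Q'$ this is the identity on generators, so both are isomorphisms and $H=G'\cap A$. The main obstacle is precisely this last claim — extracting $\Z M$-finite generation of $G'\cap A$ from the $\tilde M$-generation of $G'$ — which relies essentially on the polycyclicity of $Q$, both through the finite presentability of $Q'$ and through Proposition \ref{prp:Existence-finitely-generated-subring}.
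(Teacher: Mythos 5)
Your proof is correct and rests on the same ingredients as the paper's: the noetherian property of $\Z Q$, Proposition \ref{prp:Existence-finitely-generated-subring} to pass between $Q_\chi$ and finitely generated submonoids, the reduction to the submodule $G'\cap A$, and the finite presentability of $Q'$. The forward direction agrees essentially verbatim with the paper's (the paper chooses a finitely generated subgroup $H \leq G'$ projecting onto $Q'$; you choose lifts $\tilde b_k$ of generators of $Q'$ — same thing). The backward direction is where you diverge in packaging: the paper isolates the crux as Lemma \ref{lem:BNS}, taking $B = A \cap H$ to be normally generated by a finite $\BB$, introducing elements $a(h,y)$ via $h^y = a(h,y)\,u(h,y)$, and showing by induction on $w\in\md(\YY)$ that the submodule $A_1$ they generate satisfies $A_1 H \supseteq G'$, whence $A_1 = A$ when $A\subseteq G'$. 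You instead work directly with a finite presentation $\langle b_l \mid r_i\rangle$ of $Q'$, form the $\Z M$-submodule $H$ generated by the $\alpha_f = f\,w_f(\tilde b)^{-1}$ and the relator evaluations $r_i(\tilde b_l)$, prove by your induction that $G'/H$ is generated by the $\tilde b_l H$ and satisfies the $r_i$, and conclude $H = G'\cap A$ by the clean observation that $Q'\twoheadrightarrow G'/H\twoheadrightarrow Q'$ is the identity. This buys a more transparent role for the presentation of $Q'$ and replaces the paper's ad hoc finite set $\BB$ and the elements $a(h,y)$ by canonical data, at the mild cost of having to enlarge $F$ by the conjugates $\tilde b_k^{\tilde y}$ before the induction can run. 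Both arguments are sound; yours is a faithful reorganization of the paper's Lemma \ref{lem:BNS} rather than a fundamentally new route.
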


\begin{proof}
Note first that $A$ is a finitely generated $\Z{Q}$-module,
for $G$ is finitely generated and the polycyclic group $Q$  is finitely presentable.
Choose a finitely generated subgroup $H$ of $G'$ that projects onto $Q'$ 
and let $\HH \subset  H$ be a finite set of generators.

Assume first that $A$ is contained in $G'$.
If $[\chi] \in \Sigma^0(Q;A)$ 
then $A$ is finitely generated over $Q_\chi$, say by $\AA$;
Proposition \ref{prp:Existence-finitely-generated-subring} then shows
that $\AA$ generates $A$ over a finitely generated submonoid $M \subset Q_\chi$.
Pick a a finitely generated submonoid $\tilde{M} \subset G_{[\chi \circ \pi]}$ that projects onto $M$.
Then $\AA \cup \HH$ generates $G'$ over $\tilde{M}$ and so $[\chi \circ \pi] \in \Sigma(G)$.
Conversely, if $[\chi \circ \pi] \in \Sigma(G)$,
then $G'$ is finitely generated over a finitely generated submonoid $\md(\YY)$ of  $G_{[\chi \circ \pi]}$.
By Lemma \ref{lem:BNS} below,
the module $A$ will then be finitely generated over a submonoid of $Q_\chi$ 
and so $[\chi] \in \Sigma^0(Q;A)$.

Suppose now $A$ is not contained in $G'$ and set $A_1 = A \cap G'$.
The factor group $A/A_1 = A /(A \cap G')$ is isomorphic to $(A \cdot G')/G'$ 
and so a finitely generated abelian group.
It follows that $G/A_1$ is polycyclic and so the previous reasoning applies to $A_1$.
In addition, Corollary \ref{crl:Invariants-ses-over-polycyclic-group} shows that $\Sigma^0(Q;A) = \Sigma^0(Q;A_1)$.
\end{proof}

It remains to prove a technical result (cf.\;\cite[Lemma 3.5]{BNS}):
\begin{lem}
\label{lem:BNS}
Let $G$ be an extension of an abelian normal subgroup $A \subset G'$ by a polycyclic group $Q$.
If $G'$ is finitely generated over a finitely generated submonoid $\md(\YY)$ of $G$ 
then $A$ is finitely generated over the monoid $\md(Q', \pi(\YY))$.
\end{lem}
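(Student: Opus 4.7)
The plan is to produce an explicit finite subset $\mathcal{A} \subset A$ whose $\Z M^*$-span exhausts $A$, where $M^* := \md(Q', \pi(\YY))$. Set $\tilde M = \md(\YY)$ and $M = \md(\pi(\YY))$; since $Q' \triangleleft Q$, one immediately checks $M^* = Q' \cdot M$. Fix a finite $\mathcal{H} \subset G'$ with $G' = \langle \mathcal{H}^{\tilde M}\rangle$ as a group. Because $Q$ is polycyclic, $Q'$ is finitely generated as a group, and the identity $Q' = \pi(G') = \langle \pi(\mathcal{H})^M\rangle$ then produces a finite subset $\tilde M_0 \subset \tilde M$ (which I arrange to contain $1$) such that $V := \langle \mathcal{H}^{\tilde M_0}\rangle \leq G'$ satisfies $\pi(V) = Q'$; in particular $G' = V \cdot A$. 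Writing $A_0 = V \cap A$, the extension $1 \to A_0 \to V \to Q' \to 1$ has finitely generated $V$ and finitely presented $Q'$ (by P.\ Hall's theorem for polycyclic groups), so the standard lift-the-relators argument produces a finite $\Z Q'$-generating set $\mathcal{A}_0$ of the abelian group $A_0$.

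The next step is to enlarge $\mathcal{A}_0$ so that the resulting module can absorb arbitrary $\YY$-conjugation. For each triple $(h, \tilde m_0, y) \in \mathcal{H} \times \tilde M_0 \times \YY$, decompose $h^{\tilde m_0 y} \in G' = V \cdot A$ as $h^{\tilde m_0 y} = v(h, \tilde m_0, y) \cdot a(h, \tilde m_0, y)$ with $v(h, \tilde m_0, y) \in V$ and $a(h, \tilde m_0, y) \in A$; let $\mathcal{A}$ be the union of $\mathcal{A}_0$ with these finitely many elements, and set $A_1 := \mathcal{A} \cdot \Z M^*$. The goal now reduces to proving $\mathcal{H}^{\tilde M} \subset V \cdot A_1$, since this would force $G' = \langle \mathcal{H}^{\tilde M}\rangle \subset V \cdot A_1$ and then $A = (V \cdot A_1) \cap A = A_0 \cdot A_1 = A_1$. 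I would prove this inclusion by induction on the length of $\tilde m$ as a word in $\YY$: the base case $\tilde m = 1$ gives $h \in V$, the length-one case $h^y = v(h,1,y) \cdot a(h,1,y)$ is in $V \cdot A_1$ by the construction of $\mathcal{A}$, and for the inductive step with $\tilde m = \tilde m' y$ one writes $h^{\tilde m'} = v \cdot a \in V \cdot A_1$ and computes $h^{\tilde m} = v^y \cdot (a \cdot \pi(y))$; the second factor lies in $A_1$ since $\pi(y) \in M \subset M^*$, while $v^y$ decomposes as a product of factors $(h_i^{\tilde m_{i,0}})^y = h_i^{\tilde m_{i,0} y}$ each of which lies in $V \cdot A_1$ by the construction of $\mathcal{A}$.

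The main obstacle — and the reason the enlargement from $M$ to $M^*$ is needed — is to verify that $V \cdot A_1$ is a subgroup of $G'$, which is exactly what allows the inductive step to iterate and what ties the two types of generators of $\mathcal{A}$ together. The product formula $(v_1 a_1)(v_2 a_2) = v_1 v_2 \cdot (a_1 \cdot \pi(v_2)) \cdot a_2$ stays inside $V \cdot A_1$ only because $A_1$ is stable under the action of $\pi(V) = Q'$, and the analogous calculation for inverses uses $\pi(V^{-1}) = Q'$ in the same way. Without $Q' \subseteq M^*$ the submodule $A_1$ would fail to be $V$-invariant, the subgroup property would collapse, and the inductive carrying of the corrections $a(h, \tilde m_0, y)$ through $\YY$-conjugation would break down; this is exactly the structural reason the invariant $\Sigma^0(Q;A)$ controls $[\chi \circ \pi] \in \Sigma(G)$ via the monoid $\md(Q', \pi(\YY))$ rather than the smaller $\md(\pi(\YY))$.
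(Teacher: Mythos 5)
Your proof is correct and takes essentially the same route as the paper: fix a finitely generated subgroup of $G'$ projecting onto $Q'$ (your $V$, the paper's $H=\gp(\HH)$ after enlarging $\HH$), extract a finite set generating its intersection with $A$ over $\Z{Q'}$, adjoin the correction terms $a(\cdot)$ arising from $\YY$-conjugating the monoid generators, and induct on $\YY$-word length, the key point in both being that $Q'$-invariance of $A_1$ makes $V\cdot A_1$ (resp.\ $A_1\cdot H$) a subgroup of $G'$. The only differences are bookkeeping: you obtain $V$ by conjugating $\HH$ by a finite $\tilde M_0\subset\md(\YY)$ where the paper enlarges $\HH$ so that $\pi(\HH)$ already generates $Q'$, and you handle inverses via the subgroup property of $V\cdot A_1$ where the paper records the decompositions of $(h^{-1})^y$ explicitly.
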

\newpage

\begin{proof}
Let $\HH \subset G'$ be a finite set which generates $G'$ over the monoid $\md(\YY) \subset G$.
As $Q'$ is finitely generated, 
we can assume that $\pi(\HH)$ generates $Q'$. 
Set  $H  = \gp(\HH)$.

Since $H$ is finitely generated and  $Q'$ is finitely presentable, 
the kernel 
\[
B = \ker (\pi_{|H}) = A \cap H
\] 
is the normal closure $\gp_H(\BB)$ of a finite subset $\BB \subset H$.
In addition. there exists for every $h \in H$ and every $y \in \YY$ 
an element $a(h,y) \in A$ and an element $u(h,y) \in H$ satisfying the equation
\begin{equation}
\label{eq:Definition-a(h,y)}
h^y = a(h, y) \cdot u(h,y).
\end{equation}
We claim that $\AA = \BB \cup \{a(h,y) \mid h \in \HH \cup \HH^{-1} \text{ and } y \in \YY \}$
generates $A$ over the monoid $\md(\HH \cup \HH^{-1} \cup \YY)$.

To prove this claim we set  $A_1 = \gp(\AA^{\,\md(\HH \cup \HH^{-1} \cup \YY)})$ 
and show first that the product $A_1 \cdot H$ contains $G'$. 
Since $A_1$ is normalized by each monoid generator of $H$ this product is a subgroup of $A$; 
so it suffices to verify that each  generator $h^w$ of $G' = \gp(\HH^{\md(\YY)})$ can be written 
as a product $a_1 \cdot h_1$ with $a_1 \in A_1$ and $h_1 \in H$. 
This is clearly possible if $w = 1$ and holds for $w \in \YY$ by equation \eqref{eq:Definition-a(h,y)}.
If $w = w' y$ 
we may assume inductively that $h^{w'} = a_1\cdot h_1$, 
whence  $h^w= a_1^y \cdot h_1^y$. 
Now $a_1^y$ is in $A_1$ by the very definition of $A_1$,
while $h_1$ is a product with factors in $\HH \cup \HH^{-1}$ 
and so $h_1^y$ is in $A_1 \cdot H$ by equation \eqref{eq:Definition-a(h,y)}.

The claim that $A_1 = A$ now follows like this: 
As $A$ is contained in $G'$ by hypothesis, 
every element $a \in A$ can be written as a product $a_1 \cdot h_1$ 
with $a_1 \in A_1$ and $b_1 \in H$. 
Then $h_1$  is in $A \cap H = B$, 
so lies in $A_1$ by the definitions of $\AA$, of $B = gp_H(\BB)$ and of $A_1$,
whence $a = a_1 \cdot  h_1 \in A_1$.
\end{proof}

Proposition \ref{prp:Relating-invariants} leads immediately to a necessary condition 
for the finite presentability of an abelian-by-polycyclic group:
\begin{crl}
\label{crl:Necessary-condition-AP-group}
Let $G$ an extension of an abelian normal subgroup $A$ by a polycyclic group $Q$.
If $G$ has a finite presentation then
\begin{equation}
\label{eq:Necessary-condition-AP-group}
\Sigma(Q;A) \cup -\Sigma(Q;A) = S(Q).
\end{equation}
\end{crl}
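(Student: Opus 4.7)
The plan is to deduce the corollary by combining the BNS-style necessary condition (Theorem \ref{thm:Necessary-condition-BNS}) with the bridge supplied by Proposition \ref{prp:Relating-invariants}.

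First I would note that, because $A$ is abelian and $Q$ is polycyclic, the group $G$ is soluble; in particular it contains no non-abelian free subgroup. Assuming $G$ is finitely presented, Theorem \ref{thm:Necessary-condition-BNS} then applies and yields
\[
\Sigma(G) \cup -\Sigma(G) = S(G).
\]

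Next, pick an arbitrary point $[\chi] \in S(Q)$, represented by a non-zero character $\chi\colon Q \to \R$. Since $\pi\colon G \epi Q$ is surjective, the pullback $\tilde\chi = \chi\circ \pi$ is a non-zero character of $G$, so $[\tilde\chi] = \pi^*([\chi])$ defines a point of $S(G)$. By the previous step either $[\tilde\chi] \in \Sigma(G)$ or $[-\tilde\chi] \in \Sigma(G)$. Applying Proposition \ref{prp:Relating-invariants} in each case translates these alternatives into $[\chi] \in \Sigma^0(Q;A)$, respectively $[-\chi] \in \Sigma^0(Q;A)$. Since $[\chi]$ was arbitrary, the required equality $\Sigma^0(Q;A) \cup -\Sigma^0(Q;A) = S(Q)$ follows.

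The real work has already been absorbed into the two inputs: Theorem \ref{thm:Necessary-condition-BNS} supplies the geometric content via the free-product-with-amalgamation machinery alluded to in section \ref{ssec:Necessity-tameness-module}, while Proposition \ref{prp:Relating-invariants} carries out the bookkeeping needed to transport the condition from $S(G)$ back down to $S(Q)$. No further obstacle should arise; the corollary is a short two-line deduction from these results.
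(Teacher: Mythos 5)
Your proof is correct and follows the same route as the paper's: pull back a character $\chi$ of $Q$ to $\chi\circ\pi$ on $G$, apply Theorem \ref{thm:Necessary-condition-BNS} to get one of $\pm(\chi\circ\pi)$ into $\Sigma(G)$, and then transfer down via Proposition \ref{prp:Relating-invariants}. The one small addition you make — explicitly noting that $G$ is soluble, hence contains no non-abelian free subgroup, so that Theorem \ref{thm:Necessary-condition-BNS} actually applies — is a point the paper's proof leaves implicit, and is worth stating.
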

\begin{proof}
Let $\pi \colon G \epi Q$ be the projection associated to the extension $A \triangleleft  G \epi Q$
and let $\chi \colon Q \to \R$ be a non-trivial character.
By Theorem \ref{thm:Necessary-condition-BNS} at least one of the antipodal characters $\pm(\chi \circ \pi)$ represents then a point in $\Sigma(G)$
whence Proposition  \ref{prp:Relating-invariants} allows one to infer 
that $[\chi] \in \Sigma^0(Q;A) \cup -\Sigma^0(Q;A)$.
\end{proof}

\begin{remarks}
\label{remarks:Necessary-condition-fp-AP-groups}
a) Corollary \ref{crl:Necessary-condition-AP-group} is an unpublished result of R. Bieri and the author.

b) Let $Q$ be a nilpotent group of class 2 
and let $Q_3 < Q_2< Q_1 = Q$ be the lower central series of $Q$.
Let $A$ be a finitely generated $\Z{Q}$-module, generated by the finite set $\AA$.
If $G = Q \ltimes A$ is finitely presentable then
$\Sigma^0(Q; A) \cup - \Sigma^0(Q;A) = S(Q)$ by Corollary \ref{crl:Necessary-condition-AP-group}.
The module $A$, however, is only tame with respect to the lower central series
if, in addition,  
\[
\Sigma^0(Q'; \AA \cdot \Z{Q'}) \cup -  \Sigma^0(Q'; \AA \cdot \Z{Q'}) = S(Q)
\]
or, equivalently, if the metabelian group $Q' \ltimes (\AA \cdot \Z{Q'})$ admits a finite presentation.
As of writing,
it is unknown to what extent this additional condition is implied by the finite presentability of $G$.
\end{remarks}

\def\cprime{$'$}

\medskip

Ralph Strebel, Department of Mathematics, Chemin du Mus{\'e}e 23, \\University of Fribourg,
CH-1700 Fribourg, Switzerland

E-mail: ralph.strebel@unifr.ch

%\bibliography{bibliography_GrSt12a}

\begin{thebibliography}{10}

\bibitem{Bau73}
G. Baumslag,
\newblock Subgroups of finitely presented metabelian groups.
\newblock {\em J. Austral. Math. Soc.} \textbf{16} (1973), 98--110.
%\newblock Collection of articles dedicated to the memory of Hanna Neumann, I.

\bibitem{BNS}
R. Bieri, W.~D. Neumann, and R. Strebel,
\newblock A geometric invariant of discrete groups.
\newblock {\em Invent. Math.} \textbf{90} (1987) 451--477.

\bibitem{BiRe88}
R. Bieri and B. Renz,
\newblock Valuations on free resolutions and higher geometric invariants of groups.
\newblock {\em Comment. Math. Helv.} \textbf{63} (1988), 464--497.

\bibitem{BiSt80}
R. Bieri and R. Strebel,
\newblock Valuations and finitely presented metabelian groups.
\newblock {\em Proc. London Math. Soc. (3)} \textbf{41} (1980), 439--464.

\bibitem{BRW97}
C.~J.~B. Brookes, J.~E. Roseblade, and J.~S. Wilson,
\newblock Exterior powers of modules for group rings of polycyclic groups.
\newblock {\em J. London Math. Soc. (2)} \textbf{56} (1997), 231--244.

\bibitem{Bro00}
Christopher J.~B. Brookes,
\newblock Crossed products and finitely presented groups.
\newblock {\em J. Group Theory} \textbf{3} (1980), 433--444.

\bibitem{GrSt11}
J.~R.~J. Groves and R. Strebel,
\newblock Fitting quotients of finitely presented abelian-by-nilpotent groups.
\newblock arXiv: 1112.0367v1 (2011).

\bibitem{Hal54}
P.~Hall.
\newblock Finiteness conditions for soluble groups.
\newblock {\em Proc. London Math. Soc. (3)} \textbf{4} (1954), 419--436.

\bibitem{Kha81}
O.~G. Harlampovi{\v{c}}.
\newblock A finitely presented solvable group with unsolvable word problem.
\newblock {\em Izv. Akad. Nauk SSSR Ser. Mat.} \textbf{45} (1981), 852--873, 928.

\bibitem{Hir54}
K.~A. Hirsch.
\newblock On infinite soluble groups. {V}.
\newblock {\em J. London Math. Soc.} \textbf{29} (1954), 250--251.

\bibitem{LeRo04}
J.~C. Lennox and D. J.~S. Robinson.
\newblock {\em The theory of infinite soluble groups}.
\newblock Oxford Mathematical Monographs. The Clarendon Press Oxford University Press, Oxford, 2004.

\bibitem{Pas85}
D.~S. Passman.
\newblock {\em The algebraic structure of group rings}.
\newblock Robert E. Krieger Publishing Co. Inc., Melbourne, FL, 1985.
\newblock Reprint of the 1977 original.

\bibitem{Rem73a}
V.~N. Remeslennikov.
\newblock On finitely presented soluble groups.
\newblock In {\em Proc. Fourth All-Union Symposion on the Theory of Groups}, 1973, 164--169.

\bibitem{Rob96}
D. J.~S. Robinson.
\newblock {\em A course in the theory of groups}, volume~80 of {\em Graduate
  Texts in Mathematics}.
\newblock Springer-Verlag, New York, second edition, 1996.

\bibitem{RoSt82}
D. J.~S. Robinson and R. Strebel.
\newblock Some finitely presented soluble groups which are not nilpotent by
  abelian by finite.
\newblock {\em J. London Math. Soc. (2)}, \textbf{26} (1982), 435--440.

\bibitem{Str81b}
R. Strebel.
\newblock On finitely related abelian-by-nilpotent groups.
\newblock preprint, 1981.

\bibitem{Str84}
R. Strebel.
\newblock Finitely presented soluble groups.
\newblock In {\em Group theory}. Academic Press, London, 1984,  257--314

\bibitem{Tho77b}
Michael~W. Thomson.
\newblock Subgroups of finitely presented solvable linear groups.
\newblock {\em Trans. Amer. Math. Soc.} \textbf{231} (1977), 133--142.

\end{thebibliography}
%\bibliographystyle{plain}
%
\end{document}